\title{Isotropy indices of Pfister multiples\\ in characteristic two}
\author{Nico Lorenz}
\address{{Fakult\"at f\"ur Mathematik}, {Ruhr-Universit\"at Bochum},{Universit\"atsstra{\ss}e 150}, {44780 Bochum}, {North Rhine-Westphalia}, {Germany}}
\email{nico.lorenz@ruhr-uni-bochum.de}
\author{Krist\'{y}na Zemkov\'{a}}
\address{Department of Mathematical and Statistical Sciences, University of Alberta, Edmonton T6G 2G1, Canada} 
\address{Department of Mathematics and Statistics, University of Victoria, Victoria BC V8W
2Y2, Canada}
\email{zemk.kr@gmail.com}
\date{\today}
    \setlist[enumerate,1]{label=(\roman*), font=\normalfont}
    \setlist[enumerate,2]{label=(\alph*), font=\normalfont}
    \setlist{nosep}
\definecolor{Sepia}{HTML}{671800}
\definecolor{MidnightBlue}{HTML}{006795}
\newcommand{\Square}{\raisebox{-2pt}{\scaleobj{1.3}{\square}}}
\newtheorem{definition}{Definition}[section]
\newtheorem{question}[definition]{Question}
\newtheorem{theorem}[definition]{Theorem}
\newtheorem{lemma}[definition]{Lemma}
\newtheorem*{lemma*}{Lemma}
\newtheorem{corollary}[definition]{Corollary}
\newtheorem{remark}[definition]{Remark}
\newtheorem{example}[definition]{Example}
\newtheorem{proposition}[definition]{Proposition}
\numberwithin{equation}{section}
\newcommand{\N}{\mathbb{N}}
\newcommand{\F}{\mathbb{F}}
\newcommand{\squares}[1]{#1^2}
\newcommand{\spn}{\operatorname{span}}
\newcommand{\qf}[1]{\langle #1\rangle}
\newcommand{\bb}{\mathfrak{b}} 
\newcommand{\istar}[1]{\mathfrak{i}_{\ast}(#1)} 
\newcommand{\istarempty}{\mathfrak{i}_{\ast}} 
\newcommand{\iw}[1]{\mathfrak{i}_{\mathrm{W}}(#1)} 
\newcommand{\iwempty}{\mathfrak{i}_{\mathrm{W}}} 
\newcommand{\iql}[1]{\mathfrak{i}_{\mathrm{d}}(#1)} 
\newcommand{\iqlempty}{\mathfrak{i}_{\mathrm{d}}} 
\newcommand{\iti}[1]{\mathfrak{i}_{\mathrm{t}}(#1)} 
\newcommand{\itiempty}{\mathfrak{i}_{\mathrm{t}}} 
\newcommand{\ione}[1]{\mathfrak{i}_{1}(#1)}
\newcommand{\ioneempty}{\mathfrak{i}_{1}}
\newcommand{\itwo}[1]{\mathfrak{i}_{2}(#1)}
\newcommand{\ihigher}[2]{\mathfrak{i}_{#2}(#1)}
\newcommand{\an}{\mathrm{an}} 
\newcommand{\nd}{\mathrm{nd}} 
\newcommand{\ql}[1]{\mathrm{ql}\left(#1\right)}
\newcommand{\sqf}[1]{\langle #1 \rangle}
\newcommand{\spf}[1]{\langle\!\langle #1\rangle\!\rangle}
\newcommand{\bif}[1]{\langle #1 \rangle_b}
\newcommand{\bipf}[1]{\langle\!\langle #1\rangle\!\rangle_b}
\newcommand{\bigperp}{%
  \mathop{\mathpalette\bigp@rp\relax}%
  \displaylimits
}
\newcommand{\bigp@rp}[2]{%
  \vcenter{
    \m@th\hbox{\scalebox{\ifx#1\displaystyle2.1\else1.5\fi}{$#1\perp$}}
  }%
}
\newcommand{\dbrac}[1]{(\!(#1)\!)}
\newcommand{\FX}{F\dbrac{X}}
\renewcommand{\H}{\mathbb{H}}
\newcommand{\new}[1]{#1}
\keywords{Quadratic forms, Pfister Forms, Bilinear Forms, Characteristic 2}
\subjclass{11E04, 11E81}
\begin{document}
\begin{abstract}
    Let $F$ be a field of characteristic $2$, $\pi$ be an $n$-fold bilinear Pfister form over $F$ and $\varphi$ an arbitrary quadratic form over $F$.
    In this note, we investigate Witt index, defect, total isotropy index and higher isotropy indices of $\varphi$ and $\pi\otimes\varphi$ and prove relations among the indices of these two forms over certain field extensions.
\end{abstract}

\maketitle

\section{Introduction} \label{Sec:Intro}

Probably no class of quadratic forms is studied as extensively as the class of \emph{Pfister forms}. 
While occurring also in adjacent branches of mathematics as in Lagrange's four squares theorem or as norm forms of quaternion or octonion algebras, the understanding of Pfister forms has led to many deep results in the algebraic theory of quadratic forms.
The first breakthrough was the \emph{Arason-Pfister Hauptsatz} in \cite{ArasonPfister71}, whose proof uses the function field extension of a Pfister form. 
Pfister forms and the powers of the fundamental ideal connect the algebraic theory of quadratic forms to other areas like central simple algebras by A. Merkurjev's Theorem in \cite{MerkurjevI2Brauer}.
This connection was further enhanced to Milnor $K$-Theory and Galois cohomology by D. Orlov, A. Vishik and V. Voevodsky in \cite{Voevodsky, OrlovVishikVoevodsky} in the case of a field of characteristic not 2. 
In characteristic 2, these connections and further relations to differential forms were found earlier by K. Kato in \cite{Kat1}, S. Bloch, K. Kato in \cite{BlochKato}.
There are further plenty of structures arising from Pfister forms such as \emph{Pfister ideals} investigated, e.g., in \cite{ELWPfisterIdeals} and \cite{FitzgeraldWittKernels} or invariants as \emph{Pfister numbers} studied, e.g., in \cite{HoffmannTignol,Karpenko2017, LorenzPfisterNumbers}.

In his article \cite{OShea16}, J. O'Shea investigated multiples of Pfister forms over fields of characteristic not $2$, i.e., quadratic forms that have a decomposition $\pi\otimes\varphi$ with a bilinear Pfister form $\pi$ and a quadratic form $\varphi$. 
\new{His main focus is to link the isotropy behavior of $\varphi$ over the function field $F(\psi)$ with the one of $\pi\otimes\varphi$ over $F(\pi\otimes\psi)$, for an arbitrary quadratic form $\psi$. In particular, the choice $\psi\cong\varphi$ gives the first values in the standard splitting pattern of $\varphi$ resp. $\pi\otimes\varphi$; the \emph{standard splitting pattern} is an iterated sequence of isotropy indices of the given form over its own function field (see \Cref{Sec:Preliminaries}).
This sequence is of a major importance; the sequence itself and its connections to other invariants like the Clifford invariant has been investigated by numerous authors, see e.g. \cite{Knebusch1,Knebusch2,MR1241799,MR1361129,HoffmannSplitting}. Therefore, the ideal but unfulfilled goal of the paper \cite{OShea16}  is to link the standard splitting pattern of $\pi\otimes\varphi$ to the one of $\varphi$ (see Conjectures~1.1 and~1.2 of \cite{OShea16}).}

The aim of this paper is now to transfer the results from \cite{OShea16} to characteristic 2 whenever possible. There are technical subtleties that arise naturally, particularly because in characteristic 2, we have  \new{various types of quadratic forms with diverse behaviour. Consequently, we also have several} different indices that measure the degree of isotropy, namely the \emph{Witt index} $\iwempty$ and the \emph{defect} $\iqlempty$.
Further, their sum, the \emph{total isotropy index} $\itiempty$ and the \emph{first isotropy index} $\ioneempty$, i.e., the total isotropy index of a form over its function field are of interest.
Despite the differences in characteristic 2, some proofs work similarly as in \cite{OShea16} and from time to time, maybe after a reduction step, even the proofs of analogous results for different isotropy indices go along the same lines. 

The main questions considered in this article are thus of the following type:
\begin{question}\label{question:Intro}
    Given a bilinear Pfister form $\pi$ and a quadratic form $\varphi$ over some field $F$ of characteristic $2$ and $K, L$ field extensions of $F$ and an isotropy index $\istarempty\in\{\iwempty,\iqlempty, \itiempty, \ioneempty\}$, how are $\istar{(\pi\otimes\varphi)_K}$ and $\istar{\varphi_L}$ related to each other?
\end{question}

There are first results in the direction of \Cref{question:Intro} from the second author that we would like to recall:

\begin{proposition}[{\cite[Lemma~5.4]{KZ23-Isotropy}}]\label{Prop:IsotropyXXXPFmultiples}
Let $\varphi$, $\psi$ be nondefective quadratic forms over a field $F$ of characteristic 2 with $\dim\psi\geq2$. Let $\pi\cong\bipf{X_1,\dots,X_n}$ over $K=F\dbrac{X_1}\dots\dbrac{X_n}$. Then $\varphi$ is isotropic over $F(\psi)$ if and only if $\pi\otimes\varphi$ is isotropic over $K(\pi\otimes\psi)$.
\end{proposition}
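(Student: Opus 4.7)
The plan is to prove the equivalence by introducing the intermediate field $K(\psi) = F\dbrac{X_1}\ldots\dbrac{X_n}(\psi)$ and establishing separately the two bi-implications: (i) $\varphi$ is isotropic over $F(\psi)$ if and only if $\pi\otimes\varphi$ is isotropic over $K(\psi)$, and (ii) $\pi\otimes\varphi$ is isotropic over $K(\psi)$ if and only if $\pi\otimes\varphi$ is isotropic over $K(\pi\otimes\psi)$. The main tools are iterated Springer's theorem for nondefective quadratic forms and Knebusch specialization under $F$-places between function fields of quadrics.

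For (i), the forward direction is immediate from $F(\psi) \hookrightarrow K(\psi)$ together with the fact that $\varphi$ is a subform of $\pi\otimes\varphi$, since the bilinear Pfister form $\pi$ represents $1$. For the converse I write $\pi_i = \bipf{X_1,\ldots,X_i}$ and use the decomposition $\pi_i\otimes\varphi \cong \pi_{i-1}\otimes\varphi \perp X_i\,\pi_{i-1}\otimes\varphi$. The $X_i$-adic valuation on $F\dbrac{X_1}\ldots\dbrac{X_i}$ extends to $F\dbrac{X_1}\ldots\dbrac{X_i}(\psi)$ with residue field $F\dbrac{X_1}\ldots\dbrac{X_{i-1}}(\psi)$, because $\psi$ is defined over $F$ and hence has good reduction. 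Springer's theorem, applicable as $\varphi$ is nondefective (and thus so is each $\pi_{i-1}\otimes\varphi$), descends isotropy from $\pi_i\otimes\varphi$ to $\pi_{i-1}\otimes\varphi$. After $n$ iterations, one recovers isotropy of $\varphi$ over $F(\psi)$.

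For (ii), since $\pi$ represents $1$, the form $\psi$ is a subform of $\pi\otimes\psi$, so $\pi\otimes\psi$ is isotropic over $K(\psi)$, which produces a $K$-place $\lambda\colon K(\pi\otimes\psi) \to K(\psi) \cup \{\infty\}$ under which $\pi\otimes\varphi$ has good reduction. Knebusch specialization then transfers isotropy of $\pi\otimes\varphi$ from $K(\pi\otimes\psi)$ to $K(\psi)$, giving one direction. The reverse direction cannot be handled by $\lambda$ alone; instead, I aim to construct an isotropic vector for $\pi\otimes\varphi$ over $K(\pi\otimes\psi)$ directly from a polynomial identity $\varphi(V(y)) = \psi(y)\,g(y)$ on $Q_\psi$ witnessing the isotropy of $\varphi$ over $F(\psi)$, by evaluating $V$ at the $2^n$ generic coordinate tuples $Z_I$ of $K(\pi\otimes\psi)$ and assembling with the defining relation $\sum_{I \subseteq \{1,\ldots,n\}} X^I\,\psi(Z_I) = 0$ to produce a nontrivial zero of $\pi\otimes\varphi \cong \bigperp_I X^I\,\varphi$.

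The main obstacle is precisely this last construction: the auxiliary function $g$ is generally non-constant, so naive assembly yields $\sum_I X^I\,\psi(Z_I)\,g(Z_I)$ rather than a direct multiple of the defining relation. Overcoming this requires a careful rescaling across the $2^n$ summands of $\pi\otimes\varphi$, made delicate in characteristic $2$ by the absence of square roots and by the intertwining of the nondegenerate and quasi-linear components in the Witt decomposition; this is where the assumptions that $\pi$ is the generic $n$-fold Pfister form over the iterated Laurent series $K$ and that $\psi$ is nondefective of dimension at least $2$ enter in an essential way.
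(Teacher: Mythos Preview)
The paper does not contain a proof of this proposition; it is quoted verbatim from \cite[Lemma~5.4]{KZ23-Isotropy} as background. So there is nothing in the present paper to compare your argument against. That said, your proposal is not a complete proof, and you acknowledge this yourself.

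Your decomposition into (i) and (ii) is sound, and the parts you claim to have are essentially correct. The chain ``(ii) forward'' $+$ ``(i) backward'' yields the implication $\pi\otimes\varphi$ isotropic over $K(\pi\otimes\psi)$ $\Rightarrow$ $\varphi$ isotropic over $F(\psi)$: the $K$-place $K(\pi\otimes\psi)\to K(\psi)\cup\{\infty\}$ exists because $Q_\psi$ sits as a linear section inside $Q_{\pi\otimes\psi}$, and $\pi\otimes\varphi$ has good reduction since it is defined over $K$; then the inclusion $K(\psi)\subseteq F(\psi)\dbrac{X_1}\cdots\dbrac{X_n}$ together with iterated Springer (Lemma~\ref{Lemma:IndicesResidueForms}) finishes the descent. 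This is exactly the direction that genuinely uses the genericity of $\pi$.

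The gap is the other implication. Your ``(ii) reverse'' is never established: the last two paragraphs describe an explicit construction, identify why it fails (the auxiliary factor $g$ varies across the $2^n$ blocks), and then stop. A description of an obstacle is not a proof. Note that what you are missing is precisely the content of \Cref{Th:IsotropyPfisterMultiples}, which holds for \emph{arbitrary} bilinear Pfister forms over the base field and is also quoted in the paper from the same reference: applied over $K$, it gives $\varphi_{K(\psi)}$ isotropic $\Rightarrow (\pi\otimes\varphi)_{K(\pi\otimes\psi)}$ isotropic, and combined with your ``(i) forward'' this closes the equivalence. So rather than trying to build an isotropic vector by hand, you should either invoke \Cref{Th:IsotropyPfisterMultiples} directly or reproduce its proof; your polynomial-identity approach is reinventing that result in a special case and, as written, does not succeed.
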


\begin{theorem}[{\cite[Th.~5.5]{KZ23-Isotropy}}]\label{Th:IsotropyPfisterMultiples} 
Let $\varphi$, $\psi$ be quadratic forms over $F$, and let $\pi$ be a bilinear Pfister form over a field $F$ of characteristic 2. If $\varphi_{F(\psi)}$ is isotropic, then $(\pi\otimes\varphi)_{F(\pi\otimes\psi)}$ is isotropic.
\end{theorem}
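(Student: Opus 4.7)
The plan is to reduce the statement to the generic case established in Proposition~\ref{Prop:IsotropyXXXPFmultiples} by specializing the parameters of the Pfister form. First I would dispose of the easy cases: if $\dim\psi\leq 1$, then $F(\psi)/F$ is either trivial or purely transcendental, so the hypothesis forces $\varphi$ to be isotropic over $F$ itself and the conclusion is immediate; if $\varphi$ is already isotropic over $F$, we are similarly done. The more delicate preliminary step is to reduce to the case where $\varphi$ and $\psi$ are nondefective, as required by Proposition~\ref{Prop:IsotropyXXXPFmultiples}; I would attempt this by splitting off the quasilinear parts and using that tensoring with a bilinear Pfister form interacts with the regular-quasilinear decomposition in a controlled way.

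Next, write $\pi\cong\bipf{a_1,\ldots,a_n}$ and introduce the iterated Laurent series field $\tilde F:=F\dbrac{X_1}\cdots\dbrac{X_n}$ together with the generic $n$-fold bilinear Pfister form $\tilde\pi:=\bipf{X_1,\ldots,X_n}$ over $\tilde F$. The forward direction of Proposition~\ref{Prop:IsotropyXXXPFmultiples}, applied to $\varphi$, $\psi$, and $\tilde\pi$, then yields isotropy of $\tilde\pi\otimes\varphi$ over $\tilde F(\tilde\pi\otimes\psi)$.

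The final step is specialization. The iterated $F$-place $\lambda\colon\tilde F\to F\cup\{\infty\}$ defined by $X_i\mapsto a_i$ sends $\tilde\pi$ to $\pi$ and hence $\tilde\pi\otimes\psi$ to $\pi\otimes\psi$. Extending $\lambda$ to an $F$-place $\tilde F(\tilde\pi\otimes\psi)\to F(\pi\otimes\psi)\cup\{\infty\}$ and invoking the standard specialization principle that isotropy of a form defined over $F$ transfers along $F$-places, one obtains isotropy of $\pi\otimes\varphi$ over $F(\pi\otimes\psi)$, as desired.

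The main obstacle I foresee is the verification that $\lambda$ indeed extends to the desired place of function fields; this hinges on $\tilde\pi\otimes\psi$ having good reduction under the specialization, in the sense that the reduced form $\pi\otimes\psi$ has the expected dimension so that the two function fields are genuinely related by the specialization. Controlling this carefully in the potentially defective setting is the secondary technical point, and one may need to argue at the level of an explicit basis in order to ensure that the quadric $\{\tilde\pi\otimes\psi=0\}$ deforms flatly to $\{\pi\otimes\psi=0\}$ along each step of the iterated Laurent construction.
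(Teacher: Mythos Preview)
The paper does not prove this theorem at all: it is quoted verbatim from \cite[Th.~5.5]{KZ23-Isotropy} as background for the present work, and no argument is supplied here. Consequently there is no ``paper's own proof'' against which to compare your proposal.

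That said, your outline is a plausible route to the result, and in fact it is in the spirit of how such statements are typically established (reduce to the generic Pfister form, then specialize). A few remarks on the gaps you yourself flag. First, the reduction to nondefective $\varphi$ and $\psi$ is genuinely needed for Proposition~\ref{Prop:IsotropyXXXPFmultiples}, and ``splitting off the quasilinear parts'' is not quite the right picture: passing from $\psi$ to $\psi_\nd$ is harmless by Lemma~\ref{Lemma:FunFieldPurTransc}\ref{Lemma:FunFieldPurTransc1}, but for $\varphi$ one should rather observe that isotropy of $\varphi_{F(\psi)}$ already implies isotropy of $(\varphi_\nd)_{F(\psi)}$, so one may replace $\varphi$ by $\varphi_\nd$ outright. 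Second, the specialization step via an $F$-place $\tilde F(\tilde\pi\otimes\psi)\to F(\pi\otimes\psi)\cup\{\infty\}$ is the substantive part; the existence of such a place is exactly Knebusch's generic-splitting machinery, and in characteristic~2 with singular forms one must be careful that $\tilde\pi\otimes\psi$ has good reduction. Since $\psi$ is defined over $F$ and $\tilde\pi$ specializes to $\pi$ under $X_i\mapsto a_i$, the tensor product does have good reduction with reduced form $\pi\otimes\psi$, and the extension of the place to the function field follows from the standard theory (e.g.\ as developed by Knebusch, or in characteristic~2 by Laghribi and Hoffmann--Laghribi). So the obstacle you foresee is real but surmountable with the right references; it is not a gap in the strategy, only in the level of detail.
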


An example which shows that the reverse implication in \Cref{Th:IsotropyPfisterMultiples} is not always true is further given in \cite[Remark~5.7]{KZ23-Isotropy}. In particular, we cannot always expect equality of the indices mentioned in \Cref{question:Intro} even for obvious choices of the fields $K, L$.

Note that these results only show the isotropy but do not specify the kind of isotropy, i.e., whether the Witt index or the defect is positive.
During this note, we will make these results more explicit. 
In fact, we will show in \Cref{Th:IoneIneq} the inequality $\ione{\pi\otimes\varphi}\geq(\dim\pi)\,\ione{\varphi}$. 
We will give examples with strict inequality in \Cref{ex:StrictInequalityIOne}.
We further give some sufficient conditions for equality in the rest of \Cref{sec:FirstIsotropyIndex}.

\Cref{sec:IndexOverFunctionFields} contains results about the isotropy over function field extensions of another quadratic form $\psi$. 
As a main result, we show that if $\psi$ is of dimension at least $2$, we have the inequality
\[\istar{(\pi\otimes\varphi)_{F(\pi\otimes\psi)}} \geq(\dim\pi)\,\istar{\varphi_{F(\psi)}}\]
for $\istarempty\in\{\iwempty,\iqlempty\}$ in \Cref{Cor:DefectsWittProducts}.

The last section is devoted to the study of multiples of generic Pfister forms, i.e., Pfister forms whose slots are all algebraically independent variables.
This special situation leads to examples for equalities in our main results, see \Cref{Th:GenericPfisterMultiple}.

\section{Preliminaries and Notation}\label{Sec:Preliminaries}

All upcoming fields are assumed to have characteristic 2. 
Further, vector spaces are all of finite dimension.

We write $\bif{a_1,\ldots, a_n}$ for a bilinear form on an $n$-dimensional vector space $V$ over $F$ that has the diagonal matrix with entries $a_1,\ldots, a_n$ as a Gram matrix for a suitable basis of $V$. Note that we have $\lambda\bif{a}\cong\bif{\lambda a}$ for any $\lambda\in F$.

A \emph{quadratic form} (or just a \emph{form} for short) over a field $F$ defined on some vector space $V$ is a map $\varphi:V\to F$ such that 
\begin{enumerate}
    \item for all $a\in F$ and $v\in V$, we have 
    \[\varphi(av)=a^2\varphi(v),\]
    \item the map 
    \begin{align*}
        \bb_\varphi:V\times V&\to F\\
        (v,w)&\mapsto \varphi(v+w)+\varphi(v)+\varphi(w)
    \end{align*}
    is a bilinear form.
\end{enumerate}

The \emph{dimension} of $\varphi$ is defined as $\dim\varphi:=\dim(V)$.

Let $a,b,c\in F$.
As usual, we denote by $[a,b]$ the quadratic form $F\times F\to F, (x,y)\mapsto ax^2+xy+by^2$ and by $\qf{c}$ the quadratic form $F\mapsto F, z\mapsto cz^2$. \new{Note that for any $\lambda\in F$, we have $\lambda\sqf{c}\cong\sqf{\lambda c}$, and if $\lambda\neq0$, then $\lambda[a,b]\cong [\lambda a,\lambda^{-1}b]$. Let $\varphi$ be a quadratic form of dimension $n$ on a vector space $V$ over $F$ and let $\psi$ be a quadratic form of dimension $m$ on a vector space $W$ over the same field $F$; then we denote by $\varphi\perp\psi$ the quadratic form $V\oplus W\to F$, $(x_1,\dots,x_n, y_1,\dots,y_m)\mapsto \varphi(x_1,\dots,x_n)+\psi(y_1,\dots,y_m)$. We write $\sqf{c_1,\dots,c_s}$ instead of $\sqf{c_1}\perp\dots\perp\sqf{c_s}$. Moreover, let $\bb\cong\bif{a_1,\dots,a_k}$ be a bilinear form; we define the tensor product of $\bb$ with $\varphi$ as
\[\bb\otimes\varphi\cong a_1\varphi\perp\dots \perp a_k\varphi.\]}

\new{While quadratic forms over fields of characteristic different from $2$ can always be diagonalised by \cite[Proposition 7.29]{ElmanKarpenkoMerkurjev2008}, the structure of quadratic forms over fields of characteristic 2 is slightly more involved and will be now recalled.}

For any quadratic form $\varphi$, we can find $a_1,\ldots, a_r,b_1,\ldots, b_r, c_1,\ldots, c_s\in F$ such that we have
\begin{align}\label{eq:DecQF}
    \varphi\cong[a_1,b_1]\perp\ldots\perp[a_r, b_r]\perp\qf{c_1,\ldots, c_s}.
\end{align}

The tuple $(r,s)\in\N\times\N$ is called the \emph{type} of $\varphi$ and is an invariant under isometry.
The form $\varphi$ is called
\begin{itemize}
    \item \emph{nonsingular}, if $s=0$,
    \item \emph{totally singular} (or \emph{quasilinear}), if $r=0$,
    \item \emph{singular}, if $s>0$.
\end{itemize}
The form $\qf{c_1,\ldots, c_s}$ in \eqref{eq:DecQF} is uniquely determined up to isometry and is called the \emph{quasilinear part} of $\varphi$ and denoted by $\ql{\varphi}$.

By $D_F(\varphi)=\{\varphi(v)\mid v\in V,\, \varphi(v)\neq0\}$, we denote the set of \emph{represented elements} of $\varphi$ and by $G_F(\varphi)=\{a\in F^\ast\mid a\varphi\cong\varphi\}$ the set of \emph{similarity factors}.

A quadratic form $\varphi$ is called \emph{isotropic} if it represents 0 nontrivially, i.e., if there is some $v\in V\setminus\{0\}$ with $\varphi(v)=0$ and \emph{anisotropic} otherwise.
Since we consider fields of characteristic 2, there are two typical examples of isotropic forms, the \emph{hyperbolic plane} $\H=[0,0]$ and the quadratic form $\qf0$.

By Witt's decomposition theorem we can refine the decomposition in \eqref{eq:DecQF}, and write any quadratic form $\varphi$ as

\begin{align}\label{eq:WittDec}
    \varphi\cong i\times\H\perp\varphi_r\perp\varphi_s\perp j\times\qf0
\end{align}
with $\varphi_r$ nonsingular, $\varphi_s$ totally singular, and $\varphi_r\perp\varphi_s$ anisotropic. 

In this decomposition, the integers $i,j$ are uniquely determined.
We define
\begin{itemize}
    \item the \emph{Witt index} of $\varphi$ as $\iw{\varphi}:=i$,
    \item the \emph{defect} of $\varphi$ as $\iql{\varphi}:=j$,
    \item the \emph{total isotropy index} of $\varphi$ as $\iti{\varphi}:=i+j$. 
\end{itemize}
A quadratic form $\varphi$ is thus anisotropic if and only if $\iti{\varphi}=0$.

We call $\varphi$ \emph{nondefective} if $\iql{\varphi}=0$.
In \eqref{eq:WittDec}, the forms $\varphi_\an=\varphi_r\perp\varphi_s$ resp. $\varphi_\nd=i\times\H\perp\varphi_r\perp\varphi_s$ are unique up to isometry and called the \emph{anisotropic part} resp. the \emph{nondefective part} of $\varphi$. 
We define two forms $\varphi,\psi$ to be \emph{Witt equivalent} if $\varphi_\an\cong\psi_\an$.\\

In order to construct examples, we will often use purely transcendental field extensions or Laurent series extensions.
Over these extensions, the isotropy indices can be controlled as the following lemma shows.

\begin{lemma} \label{Lemma:IndicesResidueForms}
 Let $\varphi_0$ and $\varphi_1$ be quadratic forms over $F$, and let $\varphi\cong\varphi_0\perp X\varphi_1$ over $F\dbrac{X}$ or $F(X)$. 
 Then 
 \[\iw{\varphi}=\iw{\varphi_0}+\iw{\varphi_1} \quad \text{and} \quad \iql{\varphi}=\iql{\varphi_0}+\iql{\varphi_1}.\]
 In particular, $\varphi$ is anisotropic if and only if $\varphi_0$ and $\varphi_1$ are anisotropic.
\end{lemma}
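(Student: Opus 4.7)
The plan is to reduce the lemma to the following key claim: if $\varphi_0$ and $\varphi_1$ are anisotropic over $F$, then $\varphi_0 \perp X \varphi_1$ is anisotropic over $F\dbrac{X}$. The corresponding statement over $F(X)$ then follows via the inclusion $F(X) \hookrightarrow F\dbrac{X}$, since the total isotropy index is monotone under field extensions.

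For the reduction, I would use the scaling identities $X\cdot\H\cong\H$ (which follows from $\lambda[0,0]\cong[0,0]$ for any $\lambda\in F^\ast$) and $X\cdot\qf0\cong\qf0$. Combining the Witt decompositions of $\varphi_0$ and $\varphi_1$ with these identities and distributing the scaling by $X$ over $\varphi_1$ yields
\[\varphi\cong\bigl(\iw{\varphi_0}+\iw{\varphi_1}\bigr)\times\H\perp\bigl(\iql{\varphi_0}+\iql{\varphi_1}\bigr)\times\qf0\perp\bigl((\varphi_0)_\an\perp X(\varphi_1)_\an\bigr).\]
By uniqueness of the Witt decomposition, both asserted equalities then follow once $(\varphi_0)_\an\perp X(\varphi_1)_\an$ is shown to be anisotropic over $F\dbrac{X}$.

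The heart of the argument is thus the key claim, which I would handle with the $X$-adic valuation. Given a nonzero isotropic pair $(u,w)$ for $(\varphi_0)_\an\perp X(\varphi_1)_\an$ over $F\dbrac{X}$, scale by a suitable power of $X$ to bring all coordinates of $u$ and $w$ into $F[[X]]$ with at least one unit among them. If some coordinate of $u$ is a unit, then reducing $(\varphi_0)_\an(u) + X(\varphi_1)_\an(w) = 0$ modulo $X$ yields an isotropic vector of $(\varphi_0)_\an$ over $F$, contradicting anisotropy. Otherwise all coordinates of $u$ lie in $XF[[X]]$, so write $u = Xu'$; using the homogeneity $\varphi(Xu') = X^2 \varphi(u')$, the isotropy equation simplifies after division by $X$ to $X\varphi_0(u') + \varphi_1(w) = 0$, and reducing modulo $X$ produces an isotropic vector for $(\varphi_1)_\an$ (since some coordinate of $w$ is now forced to be a unit), again a contradiction. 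The main technical point, compatibility of reduction modulo $X$ with evaluation of $\varphi$, poses no obstacle in characteristic $2$: the decomposition \eqref{eq:DecQF} exhibits $\varphi_i$ as a polynomial in its coordinates, so evaluation commutes with the residue map, regardless of the presence of bilinear cross-terms $xy$ or quasilinear squares $cz^2$.
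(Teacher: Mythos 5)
Your proof is correct and follows essentially the same route as the paper's: the identical reduction via Witt decompositions and the scaling identities $X\cdot\H\cong\H$, $X\cdot\qf0\cong\qf0$, followed by the key claim that $(\varphi_0)_\an\perp X(\varphi_1)_\an$ is anisotropic over $F\dbrac X$. The only difference is that the paper simply cites this last fact (Springer's theorem for quadratic forms over a complete discretely valued field, \cite[Lemma~19.5]{ElmanKarpenkoMerkurjev2008}), whereas you reprove it directly with the $X$-adic valuation argument, which is precisely the standard proof of that cited lemma.
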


\begin{proof}
For $i\in\{0,1\}$, write
\[\varphi_i\cong\iw{\varphi_i}\times\H \perp \varphi_i'\perp\iql{\varphi_i}\times\sqf{0}.\]  
By \cite[Lemma~19.5]{ElmanKarpenkoMerkurjev2008}, the form $\varphi'\cong\varphi_0'\perp X\varphi_1'$ is anisotropic over $\FX$. 
Hence, $\varphi'\cong\varphi_\an$, and the claim follows.   
\end{proof}

\new{Note that the above proof for the Witt index does not depend on the characteristic of $F$.}

Let $\psi$ be another quadratic form over $F$. 
We say that $\psi$ is dominated by $\varphi$, denoted by $\psi\preccurlyeq\varphi$, if $\psi\cong\varphi|_U$ for some vector space $U\subseteq V$. 
Moreover, we say that $\psi$ is a subform of $\varphi$, denoted by $\psi\subseteq\varphi$, if there exists a quadratic form $\sigma$ such that $\varphi\cong\sigma\perp\psi$. 
Note that any subform of $\varphi$ is dominated by $\varphi$. 
If $\psi$ is nonsigular or if $\varphi$ is totally singular, then the notions of subform and dominance are equivalent, but not so in general: 
For example, $\qf0\preccurlyeq\H$, but $\qf0\perp\qf a\not\cong\H$ for any $a \in F$, because isometry preserves the type.

We have the following criterion for isotropy of dominated forms using the total isotropy index, \new{representing a characteristic 2 version of \cite[Chapter I. Exercise 16 (3)]{Lam2005}}.

\begin{lemma} [{\cite[Lemma 2.11]{HL06}}] \label{Lem:SubFormOfIsotropicFormIsIsotropic}
    If $\psi\preccurlyeq \varphi$ with $\dim\psi\geq\dim\varphi-\iti{\varphi} + 1$, then $\psi$ is isotropic.
\end{lemma}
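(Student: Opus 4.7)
The plan is to prove this by exhibiting a totally isotropic subspace of $\varphi$ of dimension exactly $\iti{\varphi}$ and then applying a dimension-intersection argument inside $V$.

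First, I would use the Witt-type decomposition recalled in \eqref{eq:WittDec}. Writing
\[\varphi\cong \iw{\varphi}\times\H\perp \varphi_r\perp\varphi_s\perp\iql{\varphi}\times\sqf{0},\]
split $V$ correspondingly as $V_\H\oplus V_{\an}\oplus V_0$, where $\varphi|_{V_\H}\cong \iw{\varphi}\times\H$, $\varphi|_{V_{\an}}\cong\varphi_r\perp\varphi_s$ is anisotropic, and $\varphi|_{V_0}\cong\iql{\varphi}\times\sqf 0$. Inside each copy of $\H=[0,0]$ the vector $(1,0)$ spans a one-dimensional totally isotropic subspace, so $V_\H$ contains a totally isotropic subspace of dimension $\iw{\varphi}$. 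The whole space $V_0$ is totally isotropic of dimension $\iql{\varphi}$. Putting these together yields a totally isotropic subspace $T\subseteq V$ with
\[\dim T=\iw{\varphi}+\iql{\varphi}=\iti{\varphi}.\]

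Now, since $\psi\preccurlyeq\varphi$, there is a subspace $U\subseteq V$ with $\psi\cong\varphi|_U$ and $\dim U=\dim\psi$. By hypothesis
\[\dim U+\dim T\geq\bigl(\dim\varphi-\iti{\varphi}+1\bigr)+\iti{\varphi}=\dim V+1,\]
so the standard dimension formula for subspaces of $V$ gives $\dim(U\cap T)\geq 1$. Any nonzero vector $v\in U\cap T$ then satisfies $\psi(v)=\varphi(v)=0$, proving that $\psi$ is isotropic.

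The only potentially non-routine point is to be sure that one really can assemble a totally isotropic subspace of dimension $\iti{\varphi}$ in characteristic $2$; the contribution from the defect part $\iql{\varphi}\times\sqf0$ is immediate, while for the Witt part one uses that each copy of $\H=[0,0]$ has $\sqf0$ as a subform. Once this is in place, the rest is purely linear-algebraic and does not depend on the characteristic, paralleling the standard characteristic-not-$2$ argument of \cite[Chapter I, Exercise 16]{Lam2005}.
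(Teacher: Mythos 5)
The paper does not give its own proof of this lemma; it simply cites \cite[Lemma 2.11]{HL06}, so there is no in-paper argument to compare against. Your proof is correct and is the standard dimension-counting argument, essentially what the cited source uses: from the Witt decomposition one assembles a subspace $T$ on which $\varphi$ vanishes identically, of dimension $\iw{\varphi}+\iql{\varphi}=\iti{\varphi}$, and then the inequality $\dim U+\dim T\geq\dim V+1$ forces a nonzero vector in $U\cap T$.

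One small point you leave implicit: when you ``put together'' the $\iw{\varphi}$-dimensional isotropic piece $T_\H$ (one isotropic line per copy of $\H$) with the radical $V_0\cong\iql{\varphi}\times\sqf0$, you need $\varphi$ to vanish on \emph{sums} $t_\H+t_0$, not merely on $T_\H$ and $V_0$ separately. This holds because the Witt decomposition is an orthogonal decomposition, so $\bb_\varphi(t_\H,t_0)=0$ and hence $\varphi(t_\H+t_0)=\varphi(t_\H)+\varphi(t_0)=0$; similarly the isotropic lines chosen from distinct hyperbolic planes are mutually orthogonal, so $T_\H$ itself is totally isotropic. With that observation made explicit, the argument is complete.
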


Let $\bb$ be a bilinear form defined over a vector space $V$. The set of \emph{represented elements} of $\bb$ is given by $D_F(\bb)=\{\bb(v,v)\mid v\in V\}$ and the set of \emph{similarity factors} of $\bb$ is given by $G_F(\bb)=\{a\in F^\ast\mid a\bb\cong\bb\}$.
If the field is clear from the context, we will drop the subscript to shorten the notation.
A bilinear form $\bb$ is called \emph{round} if we have $D_F(\bb)=G_F(\bb)$.

As stated in the introduction, we have a special interest in quadratic forms that can be written as a tensor product of a quadratic form with a bilinear Pfister form.
A \emph{$1$-fold bilinear Pfister form} is a bilinear form isometric to $\bipf a:=\bif{1,a}$ for some $a\in F^\ast$ and for $n\geq2$, an \emph{$n$-fold bilinear Pfister form} is a form isometric to one of the shape
\[\bipf{a_1,\ldots, a_n}:=\bipf{a_1}\otimes\ldots\otimes\bipf{a_n}\]
for some $a_1,\ldots, a_n\in F^\ast$.
By convention, $\bif1$ is considered as a $0$-fold bilinear Pfister form.
It is well known that bilinear Pfister forms are round \cite[Corollary~6.2]{ElmanKarpenkoMerkurjev2008}.\\

A \emph{$1$-fold quadratic Pfister form} is a form isometric to $[1,a]$ for some $a\in F$ and for $n\geq2$ an \emph{$n$-fold quadratic Pfister form} is form isometric to $\pi\otimes[1,a]$ for some $(n-1)$-fold bilinear Pfister form $\pi$.
An \emph{$n$-fold quasi-Pfister form} is a form isometric to $\pi\otimes\qf1$ for some $n$-fold bilinear Pfister form $\pi$.

If $E/F$ is a field extension and $\varphi$ a quadratic form over $F$ defined on $V$, we denote by $\varphi_E$ the quadratic form obtained by scalar extension to $E$. 
Explicitly, $\varphi_E$ is defined on $V_E:=E\otimes_FV$ and for $a,b\in E, v,w\in V$, we have
\[\varphi_E(a\otimes v)=a^2\varphi(v)\text{ and }\bb_{\varphi_E}(a\otimes v,b\otimes w)=ab\bb_\varphi(v,w).\]

\new{The way the isotropy indices behave under field extensions give insight on the structure of a quadratic form.
Of particular interest is the case of quadratic field extensions. 
Recall that in characteristic 2, there are two types of quadratic extensions: inseparable, i.e., of the form $F(\sqrt{d})/F$, and separable, i.e., of the form $F(\wp^{-1}(d))/F$, where  
\[\wp:F\to F,\ x\mapsto x^2+x\]
is the \emph{Artin-Schreier map}. The necessity of this distinction makes the following proposition more involved than its characteristic not 2 analogue, where only separable field extensions can occur (see  \cite[Corollary 22.12]{ElmanKarpenkoMerkurjev2008}).
}

\begin{proposition}[{\cite[Prop.~2.11]{Lag06}, \cite[Prop.~1.1 and Rem.~1.2(i)]{Ahmad04}}] \label{Prop:IsotropySubforms}
Let $\varphi$ be an anisotropic quadratic form over $F$ and $d\in F^*$.
\begin{enumerate}
    \item \label{Prop:IsotropySubforms1} Let $\varphi$ be totally singular. Then there exists a totally singular quadratic form $\tau$ over $F$ of dimension $\iql{\varphi_{F(\sqrt{d})}}$ such that $\tau\otimes\sqf{1,d}\subseteq\varphi$.
    \item\label{Prop:IsotropySubforms2} Let $\varphi$ be nonsingular. Let $r=\iw{\varphi_{F(\sqrt{d})}}$. Then there exist $a_i,b_i,c_i\in F^*$, $1\leq i\leq r$, such that $\bigperp_{i=0}^{r}([a_i,b_i]\perp d[a_i,c_i])\subseteq\varphi$.
    \item \label{Prop:IsotropySubforms3} Let $\varphi$ be nonsingular. Then there exists a bilinear form $\tau$ over $F$ of dimension $\iw{\varphi_{F(\wp^{-1}(d))}}$ such that $\tau\otimes[1,d]\subseteq\varphi$.
\end{enumerate}
\end{proposition}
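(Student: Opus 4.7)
The plan is induction on the relevant isotropy index $r$ in each of the three parts. In each inductive step, an isotropic vector of $\varphi$ over the quadratic extension is used to extract a subform of $\varphi$ of the asserted shape; splitting it off and inducting on the orthogonal complement closes the argument. The uniform mechanism is that the extracted subform becomes hyperbolic (parts (ii), (iii)) or maximally defective (part (i)) over the given extension, so that Witt cancellation drops the relevant isotropy index by exactly one at each step.

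Parts (i) and (iii) are the direct cases. For (i), since $\varphi$ is totally singular, an isotropic vector $v + \sqrt{d}\,w$ of $\varphi_{F(\sqrt d)}$ satisfies $\varphi(v) + d\varphi(w) = 0$; anisotropy forces $c := \varphi(w) \neq 0$ and $v, w$ linearly independent (otherwise $d$ would be a square in $F$). The restriction of $\varphi$ to $\spn\{v, w\}$ is then isometric to $c\sqf{1, d}$, and is a subform because for totally singular forms dominance equals subform inclusion. Splitting $\varphi \cong \sqf{c, cd} \perp \varphi_1$ and using $\iql{(c\sqf{1,d})_{F(\sqrt d)}} = 1$ gives $\iql{(\varphi_1)_{F(\sqrt d)}} = r-1$, and induction yields $\tau = \sqf{c} \perp \tau_1$. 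For (iii), writing $\alpha^2 + \alpha = d$ in $F(\wp^{-1}(d))$, an isotropic vector $v + \alpha w$ yields the two conditions $\varphi(v) + d\varphi(w) = 0$ and $\bb_\varphi(v, w) = \varphi(w)$ by comparing the coefficients of $1$ and $\alpha$. Setting $\lambda = \varphi(w) \neq 0$, the restriction to $\spn\{v, w\}$ is nonsingular and, after renormalization, isometric to $\lambda d\, [1, d]$, which becomes hyperbolic over $F(\wp^{-1}(d))$ via $x \mapsto x + \alpha y$; induction then produces $\tau = \bif{\lambda d} \perp \tau_1$.

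Part (ii) is the technically most delicate. The isotropic vector $v + \sqrt{d}\,w$ of $\varphi_{F(\sqrt d)}$ now yields both $\varphi(v) + d\varphi(w) = 0$ and $\bb_\varphi(v, w) = 0$, so $\spn\{v, w\}$ is totally isotropic for the nondegenerate symplectic form $\bb_\varphi$. A standard symplectic extension produces $v^*, w^* \in V$ completing $\{v, w\}$ to a symplectic basis of a $4$-dimensional nonsingular subspace; anisotropy of $\varphi$ forces the four diagonal values $\varphi(v), \varphi(v^*), \varphi(w), \varphi(w^*)$ to be nonzero. Using the relation $\varphi(v) = d\varphi(w)$ and rewriting $[d\varphi(w), \varphi(v^*)]$ as $d\cdot [\varphi(w), d\varphi(v^*)]$ exhibits the restriction of $\varphi$ to this subspace in the shape $[a, b] \perp d[a, c]$. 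The main obstacle is to ensure that the Witt index of this $4$-dimensional subform over $F(\sqrt d)$ is exactly $1$: over the extension it is isometric to $[a, b+c] \perp \H$, and $[a, b+c]_{F(\sqrt d)}$ is not guaranteed to remain anisotropic. Controlling this requires exploiting the freedom in the choice of symplectic partners $v^*, w^*$ so that the Witt index drops by exactly one per step; once this is arranged, the induction closes as in the other parts.
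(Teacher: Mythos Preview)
The paper does not supply its own proof of this proposition; it is quoted from \cite{Lag06} and \cite{Ahmad04} and used as input. Your inductive strategy is the standard one underlying those references, and your sketch of part~(iii) is correct: once the binary subform $\lambda[1,d]$ (your scalar $\lambda d$ should be $\lambda$, but this is immaterial) is split off, Witt cancellation for nonsingular forms gives $\iw{(\varphi_1)_{F(\wp^{-1}(d))}}=r-1$ on the nose, and the induction closes without further care.

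There is, however, a genuine gap in part~(i) of exactly the same nature as the obstacle you correctly flag in part~(ii). After writing $\varphi\cong c\sqf{1,d}\perp\varphi_1$, you assert $\iql{(\varphi_1)_{F(\sqrt d)}}=r-1$, but for totally singular forms there is no Witt cancellation, and one only obtains $\iql{(\varphi_1)_{F(\sqrt d)}}\in\{r-2,\,r-1\}$. The bad case does occur: over $F=\F_2(s,t,d)$ the form $\varphi=\sqf{s+dt,\ sd+d^2t,\ s,\ t}$ is anisotropic with $\iql{\varphi_{F(\sqrt d)}}=2$, yet choosing the isotropic vector in the first two coordinates gives $c=s/d+t$ and complement $\varphi_1=\sqf{s,t}$, which stays anisotropic over $F(\sqrt d)$. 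So the induction as you wrote it stalls. The fix is the same as what you allude to for part~(ii): one must choose the initial isotropic vector judiciously (in the example, working instead with the presentation $\varphi\cong\sqf{s,sd,t,td}$ and extracting $\sqf{s,sd}$ yields $\varphi_1=\sqf{t,td}$ with defect $1$ over $F(\sqrt d)$), or replace the one-step extraction by a global argument that produces $\tau$ all at once, as in Laghribi's proof.
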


\new{Note that the case of a totally singular form over the field $F(\wp^{-1}(d))$ is missing in \Cref{Prop:IsotropySubforms}.  That is because } it is well known that anisotropic totally singular forms remain anisotropic over separable field extensions by \cite[Lemma 2.8]{Lag06}.

A special emphasis will further lie on function field extensions, which now will be introduced. 
For a quadratic form $\varphi$ of dimension $n$, the polynomial $\varphi(X_1,\ldots , X_n)$ is reducible if and only if the nondefective part $\varphi_\nd$ of $\varphi$ is either of the type $(0, 1)$, or of the type $(1, 0)$ and $\varphi_\nd \cong \H$, see \cite[Remark 1]{Ahmad1997}). 
We say that the quadratic form $\varphi$ is \emph{irreducible} if the polynomial $\varphi(X_1,\ldots,X_n)$ is irreducible; in this case, $\varphi(1,X_2,\dots,X_n)$ is also irreducible, and we can define the \emph{function field} $F(\varphi)$ as the quotient field of the integral domain $F[X_2,\ldots, X_n]/(\varphi(1,X_2,\ldots, X_n))$. For example, if $[1,d]$ is anisotropic, we have $F([1,d])\cong F(\wp^{-1}(d))$.
More generally, the extension $F(\varphi)/F$ is separable if $\varphi$ is not totally singular by \cite[Lemma 22.14]{ElmanKarpenkoMerkurjev2008}.
If $\varphi\cong\qf{a}$ or $\varphi\cong\H$, we define $F(\varphi)=F$.

Recall that Witt index and defect of a quadratic form do not change over a purely transcendental field extension (see \cite[Lemma~7.15]{ElmanKarpenkoMerkurjev2008}). 
Thus, the following lemma will often help us to reduce the problem to anisotropic quadratic forms.

\begin{lemma} \label{Lemma:FunFieldPurTransc}
Let $\psi$ be an irreducible quadratic form over $F$.
\begin{enumerate}
    \item \label{Lemma:FunFieldPurTransc1} 
     The field extension $F(\psi)/F(\psi_\nd)$ is purely transcendental.
    \item \label{Lemma:FunFieldPurTransc2} 
    If $\iw{\psi}>0$, then the field extension $F(\psi)/F$ is purely transcendental.
\end{enumerate} 
\end{lemma}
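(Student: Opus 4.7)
The plan is to choose coordinates adapted to convenient orthogonal decompositions of $\psi$ and then describe $F(\psi)$ directly from the resulting defining polynomial, once one has set $X_1 = 1$.

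For (i), set $j = \iql{\psi}$ and write $\psi \cong \psi_\nd \perp j\times\qf 0$, arranging the coordinates so that the last $j$ variables correspond to the $\qf 0$-summands. Before applying the function-field construction to $\psi_\nd$, I first have to check that $\psi_\nd$ is itself irreducible, so that $F(\psi_\nd)$ is well defined in the sense recalled in the excerpt; this follows from the characterisation of reducibility stated just before the lemma, since if $\psi_\nd$ were of type $(0,1)$ or isometric to $\H$, then $\psi$ would share the same nondefective part and hence be reducible, contrary to hypothesis. Once $F(\psi_\nd)$ is available, the defining polynomial becomes
\[\psi(1, X_2, \ldots, X_n) \;=\; \psi_\nd(1, X_2, \ldots, X_{n-j}),\]
which does not involve $X_{n-j+1}, \ldots, X_n$. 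Forming the quotient ring and passing to the fraction field therefore yields
\[F(\psi) \;\cong\; F(\psi_\nd)(X_{n-j+1}, \ldots, X_n),\]
a purely transcendental extension of transcendence degree $j$.

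For (ii), the hypothesis $\iw{\psi} > 0$ allows me to write $\psi \cong \H \perp \psi'$ for some quadratic form $\psi'$ of dimension $n-2$; irreducibility of $\psi$ excludes $\psi \cong \H$, so $\dim\psi' \geq 1$. Arranging coordinates so that the $\H$-summand is carried by the first two variables gives
\[\psi(1, X_2, \ldots, X_n) \;=\; X_2 + \psi'(X_3, \ldots, X_n),\]
which is linear in $X_2$. The quotient $F[X_2, \ldots, X_n]/(\psi(1, X_2, \ldots, X_n))$ is therefore isomorphic to $F[X_3, \ldots, X_n]$, so that $F(\psi) \cong F(X_3, \ldots, X_n)$ is purely transcendental over $F$.

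Both arguments reduce to elementary polynomial manipulations; the only step that requires genuine attention is the verification that $\psi_\nd$ is irreducible in (i), which rests precisely on the reducibility criterion for quadratic forms recalled just before the lemma statement.
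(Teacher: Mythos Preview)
Your argument is correct. The paper's own proof simply cites \cite[Section~4.2]{HL04} for \ref{Lemma:FunFieldPurTransc1} and deduces \ref{Lemma:FunFieldPurTransc2} from \ref{Lemma:FunFieldPurTransc1} together with \cite[Prop.~22.9]{ElmanKarpenkoMerkurjev2008}; by contrast, you give a self-contained coordinate computation for each part. Your treatment of \ref{Lemma:FunFieldPurTransc1} is exactly the kind of argument one finds behind the cited reference, and your check that $\psi_\nd$ is itself irreducible is the one nontrivial point, which you handle correctly via the reducibility criterion. For \ref{Lemma:FunFieldPurTransc2} the two approaches genuinely diverge: the paper first strips off the defective part using \ref{Lemma:FunFieldPurTransc1} and then appeals to the standard fact that a nondegenerate isotropic quadric is rational, whereas you split off a hyperbolic plane directly from $\psi$ and read off rationality from the linear variable. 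Your route is slightly more economical (it does not rely on \ref{Lemma:FunFieldPurTransc1}), while the paper's route has the mild advantage of isolating the rationality statement for nondefective forms as a citable black box.
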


\begin{proof}
Part \ref{Lemma:FunFieldPurTransc1} can be found in \cite[Section~4.2]{HL04}. Part \ref{Lemma:FunFieldPurTransc2} follows from \ref{Lemma:FunFieldPurTransc1}  and \cite[Prop.~22.9]{ElmanKarpenkoMerkurjev2008}.
\end{proof}

The \emph{standard splitting tower} of a quadratic form $\varphi$ defined over a field $F$ is constructed as follows:
We set $F_0:=F$ and $\varphi_0:=\varphi_\an$. 
For $n\geq1$, we define $F_n:=F(\varphi_{n-1})$ and $\varphi_n:=(\varphi_{F_n})_\an$ as long as $\dim\varphi_n>1$. 
The form $\varphi_n$ is called the \emph{$n$-th kernel form} of $\varphi$ and the least integer $h$ with $\dim\varphi_h\leq1$ is called the \emph{height} of $\varphi$.
In characteristic 2, these concepts were first investigated in \cite{LaghribiGenericSplitting} and a systematic study for totally singular forms can be found in \cite{Lag04}.

For $n\geq1$ we define the \emph{$n$-th isotropy index} as $\ihigher{\varphi}{n}:=\iti{\varphi_{F_n}}$.
Let now $\varphi$ be of type $(r,s)$ with $r\geq1$.
By \cite[Theorem 4.6]{LaghribiGenericSplitting} there is some integer $1\leq i\leq h$ such that for $j\leq i$, the form $\varphi_j$ is of type $(r_j, s)$ with $r_j<r_{j-1}$ and $r_j=0$ for $j\geq i$.
This integer $i$ is called the \emph{nondefective height} of $\varphi$ and is denoted by $h_\nd(\varphi)$. 
In particular, if $\varphi_0$ is not totally singular, we have $\ione{\varphi}=\iw{\varphi_{F_1}}$ and $\ql{\varphi_j}=\ql{\varphi}_{F_j}$ for all $j\leq h_\nd(\varphi)$. 

If $E/F$ is any field extension and $(\varphi_E)_\an$ is of type $(r', s')$, then by \cite[Proposition 4.6]{HL06}, there is some $j\leq h_\nd(\varphi)$ such that $\varphi_j$ is of type $(r', s)$. 

The above combined with \cite[Corollary 4.8]{Scu16-Split} to cover the case of totally singular forms now yields the following property of the first isotropy index. 

\begin{corollary}\label{cor:FirstWittIndexMinimal}
    Let $\varphi$ be an anisotropic quadratic form over $F$ and $E/F$ a field extension such that $\varphi$ is isotropic and, if $\varphi$ is not totally singular, $\iw{\varphi_E}>0$.
    We then have
    \[\ione{\varphi}\leq\iti{\varphi_E}.\]
\end{corollary}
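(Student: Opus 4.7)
The plan is to split into two cases based on whether $\varphi$ is totally singular or not.

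If $\varphi$ is totally singular, then so is $\varphi_E$, so $\iw{\varphi_E}=0$; the hypothesis that $\varphi_E$ is isotropic then reduces to $\iql{\varphi_E}>0$, and we have both $\iti{\varphi_E}=\iql{\varphi_E}$ and $\ione{\varphi}=\iti{\varphi_{F_1}}=\iql{\varphi_{F_1}}$. The desired inequality $\iql{\varphi_{F_1}}\leq\iql{\varphi_E}$ is then exactly what \cite[Corollary~4.8]{Scu16-Split}, cited just before the statement, provides, so this case is done by citation.

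Now suppose $\varphi$ is not totally singular. By hypothesis, $\iw{\varphi_E}>0$. Let $\varphi$ have type $(r,s)$ and let $(\varphi_E)_\an$ have type $(r',s')$. Comparing the Witt decomposition of $\varphi$ over $F$ with that of $\varphi_E$ over $E$, and noting that the quasilinear parts contribute to the total dimension independently of the nonsingular parts, we obtain $\iw{\varphi_E}=r-r'$, so in particular $r'<r$. By the consequence of \cite[Proposition~4.6]{HL06} recalled immediately before the statement, there exists some $j\leq h_\nd(\varphi)$ with $\varphi_j$ of type $(r',s)$. Since $r_0=r>r'=r_j$, necessarily $j\geq 1$. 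Using that the sequence $r_0>r_1>\dots>r_{h_\nd(\varphi)}$ is strictly decreasing by \cite[Theorem~4.6]{LaghribiGenericSplitting}, we get $r_1\geq r_j=r'$. Because $\varphi$ is not totally singular, $\ione{\varphi}=\iw{\varphi_{F_1}}=r-r_1$, and the chain
\[\ione{\varphi}=r-r_1\leq r-r'=\iw{\varphi_E}\leq\iti{\varphi_E}\]
finishes the proof.

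The core of the argument is essentially bookkeeping with the type $(r,s)$ along the standard splitting tower versus under the extension to $E$. I do not expect any serious obstacle: the non-trivial input is packaged into the two cited results (Scully's for the totally singular case, and Hoffmann--Laghribi's for the other), and the rest is an elementary comparison observing that the first step of the nondefective splitting tower cannot drop the first type coordinate by more than any single isotropy-inducing extension does.
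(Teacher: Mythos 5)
Your proof is correct and follows the paper's intended route: the paper presents this corollary as a direct consequence of the two paragraphs preceding it (the description of the nondefective height and the citation to \cite[Prop.~4.6]{HL06}), together with \cite[Cor.~4.8]{Scu16-Split} for the totally singular case, and your two cases unpack exactly that. The bookkeeping $\iw{\varphi_E}=r-r'$ is justified because the type $(r,s)$ is preserved under scalar extension, which is what your phrase about quasilinear parts is gesturing at; the rest (the strictly decreasing sequence $r_j$ forcing $r_1\geq r'$) is precisely the argument the paper has in mind.
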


\new{Recall that for nonsingular forms of characteristic not 2, there is only one type of isotropy, measured by the Witt index.
Therefore the characteristic not 2 version of the above Corollary obtained by M. Knebusch in \cite[Proposition 3.1, Theorem 3.3]{Knebusch1} just uses Witt indices in its formulation.}

To conclude this introductory section, we will finally illuminate another aspect of function field extensions.
It is a longstanding problem in the algebraic theory of quadratic forms to determine which quadratic forms become isotropic or hyperbolic over the function field of a given form, see, e.g., \cite{Hoffmann6DimFunctionField, HoffmannIsotropyOverFunctionFields, IzhboldinIsotropyFunctionField, IzhboldinKarpenkoIsotropyDim6, LaghribiFunctionFieldDim6, HL06}. 
We have a necessary condition using the first isotropy index that was first shown for fields of characteristic not 2 by N. Karpenko and A. Merkurjev in \cite[Theorem 4.1]{KarpenkoMerkurjevEssDim} and then transferred to the characteristic 2 case by B.~Totaro.

\begin{theorem}[{\cite[Th.~5.2]{Totaro08}}] \label{IoneIsotropyEquivalence}
Let $\varphi$ and $\psi$ be anisotropic quadratic forms over $F$. If $\varphi_{F(\psi)}$ is isotropic, then
\[\dim\psi-\ione{\psi}\leq\dim\varphi-\ione{\varphi}\]
with equality if and only if $\psi_{F(\varphi)}$ is isotropic as well.
\end{theorem}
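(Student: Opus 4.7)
The plan is to adapt the characteristic not 2 argument of Karpenko and Merkurjev, which proceeds via the theory of upper motives of projective quadrics, to the characteristic 2 setting.

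First, I would reduce to forms that are not totally singular. When $\psi$ is totally singular, the function field $F(\psi)$ is a purely inseparable extension of a purely transcendental extension of $F$, and the claim follows by direct analysis of dominance of quasilinear forms, using the standard splitting results of \cite{Lag04}. A similar reduction handles the case of totally singular $\varphi$. Thus one may assume that neither $\psi$ nor $\varphi$ is totally singular; then $F(\psi)/F$ and $F(\varphi)/F$ are separable by \cite[Lemma~22.14]{ElmanKarpenkoMerkurjev2008} and the projective quadrics $X_\psi, X_\varphi$ are geometrically integral.

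The key geometric input is the identification
\[\dim\psi - \ione{\psi} - 1 \;=\; \dim U_{\psi},\]
where $U_{\psi}$ denotes the upper motive of $X_\psi$ with $\F_2$-coefficients, i.e., the indecomposable direct summand of the Chow motive of $X_\psi$ that contains the class of a rational point. This is the characteristic 2 incompressibility theorem for projective quadrics and is the core content of \cite{Totaro08}; its proof requires a careful study of Chow groups and Steenrod operations on (possibly singular) quadrics in characteristic 2. With this in hand, isotropy of $\varphi_{F(\psi)}$ produces a correspondence $X_\psi \to X_\varphi$ of odd degree, inducing a nonzero morphism between upper motives; the standard upper motive argument then forces $U_\psi$ to be a direct summand of $U_\varphi$, whence $\dim U_\psi \leq \dim U_\varphi$, which is precisely the desired inequality.

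For the equality case, equality of dimensions forces $U_\psi \cong U_\varphi$, so that the correspondence becomes an isomorphism up to summands supported in smaller dimension; this produces a correspondence going the other way and hence isotropy of $\psi_{F(\varphi)}$. The reverse implication is then immediate by applying the main inequality with the roles of $\varphi$ and $\psi$ swapped. The main obstacle is the incompressibility theorem itself: because $X_\psi$ can fail to be smooth when $\psi$ has a nontrivial quasilinear part, the standard motivic machinery has to be reworked via the characteristic 2 techniques of \cite{Totaro08}.
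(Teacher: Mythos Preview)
The paper does not give its own proof of this theorem; it is quoted verbatim as a result from \cite{Totaro08} and used as a black box throughout. So there is nothing in the paper to compare your argument against.

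Your sketch is a reasonable high-level summary of the Karpenko--Merkurjev strategy and of what has to be modified in characteristic~2, and you correctly identify that the real work is the incompressibility statement for possibly singular quadrics. One caveat: Totaro's actual proof in \cite{Totaro08} does not run through the upper motive formalism in the way you describe; it proceeds more directly via canonical dimension, specialization of $0$-cycles, and Steenrod operations on Chow groups, with the singular quadric case handled by passing to a smooth subquadric of the same canonical dimension. Your reduction step for totally singular $\psi$ is also not quite right as stated: the totally singular case is genuinely part of Totaro's theorem and is not disposed of by a purely elementary dominance argument. But since the present paper only cites the result, none of this affects anything here.
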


\new{As another necessary condition, we have the following characteristic 2 version of Hoffmann's Separation Theorem \cite[Theorem 1]{HoffmannIsotropyOverFunctionFields}.
It was transferred to characteristic 2 by D. Hoffmann itself in collaboration with A. Laghribi and states that the dimensions of two forms such that one of them becomes isotropic over the function field of the other cannot be separated by a power of two:}

\begin{theorem}[{Hoffmann's Separation Theorem, \cite[Theorem 1.1]{HL06}}]\label{thm:HoffmannSeparation}
    Let $F$ be a field of characteristic $2$. Let $\varphi,\psi$ be anisotropic quadratic forms over $F$ (possibly singular) such that $\dim\varphi \leq 2^n < \dim\psi$ for some integer $n \geq 1$. 
    Then $\varphi$ stays anisotropic over $F(\psi)$.
\end{theorem}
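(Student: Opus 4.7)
The plan is to argue by contradiction. Assume that $\varphi$ is anisotropic with $\dim\varphi\leq 2^n$, $\psi$ is anisotropic with $\dim\psi>2^n$, and $\varphi_{F(\psi)}$ is isotropic. The first step is to reduce to the case $\dim\varphi=2^n$. If $\dim\varphi<2^n$, pass to a purely transcendental extension $\tilde F=F(T_1,\ldots,T_k)$ with $k=2^n-\dim\varphi$ and enlarge $\varphi$ by a generic diagonal piece $\tau$ whose coefficients are among the $T_i$, producing $\tilde\varphi$ of dimension exactly $2^n$. Iterated application of \Cref{Lemma:IndicesResidueForms} together with the invariance of Witt index and defect under purely transcendental extensions shows that $\tilde\varphi$ stays anisotropic over $\tilde F$ while remaining isotropic over $\tilde F(\psi)$; the type of $\tau$ (nonsingular, totally singular, or mixed) must be chosen to match the type of $\varphi$ so that anisotropy is preserved.

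The second step is a case analysis on the type of $\psi$, since the available tools differ significantly between separable and inseparable function field extensions. If $\psi$ is totally singular, then $F(\psi)/F$ is purely inseparable, and the theory of quasilinear Pfister forms developed by Hoffmann and Laghribi applies; Cassels--Pfister-type subform theorems for quasilinear forms then force a quasi-Pfister multiple of $\varphi$ to embed into $\psi$, producing a dimension inequality incompatible with $\dim\varphi=2^n<\dim\psi$. If $\psi$ is not totally singular, then $F(\psi)/F$ is separable and, after passing to a suitable quadratic extension $F(\sqrt d)$ or $F(\wp^{-1}(d))$, one extracts a structured subform of $\varphi$ of the shape $\tau\otimes\sqf{1,d}$ or $\tau\otimes[1,d]$ by means of \Cref{Prop:IsotropySubforms}. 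Dimension bounds on $\tau$ combined with $\dim\varphi\leq 2^n$ then yield the required contradiction. The whole argument is organised as an induction on $n$, descending whenever possible along the standard splitting tower of $\psi$, so that the inductive hypothesis can be applied to $(\varphi_{F_1})_\an$ and $\psi_1$ over $F_1=F(\psi)$ once $\dim\psi_1>2^n$ still holds.

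The main obstacle lies in the mixed-type case, where both $\varphi$ and $\psi$ are singular with nontrivial nonsingular and quasilinear parts simultaneously. The enlargement in the first step and the subform extraction in the second step must then be compatible with both parts in parallel, tracking bilinear and quadratic Pfister-theoretic invariants side by side. A particularly delicate point is ensuring that the generic enlargement of $\varphi$ does not accidentally trivialise its quasilinear part and that the applications of \Cref{Prop:IsotropySubforms} across the various quadratic extensions respect the mixed structure. It is precisely this interplay that forces the original proof in \cite{HL06} to distinguish several subcases before the dimension inequality finally closes the argument.
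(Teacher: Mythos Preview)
The paper does not prove this statement at all: \Cref{thm:HoffmannSeparation} is quoted as a black box from \cite[Theorem~1.1]{HL06} and used repeatedly as an input (for instance in \Cref{Cor:EasyEstimateForIone} and \Cref{Rem:ProductsLemmasApplication}). There is therefore nothing in the present paper to compare your proposal against.

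As a sketch of the actual Hoffmann--Laghribi argument, your outline is too vague to be assessed as correct, and several steps as written do not work. In the totally singular case you say a Cassels--Pfister-type theorem ``forces a quasi-Pfister multiple of $\varphi$ to embed into $\psi$'', but the hypothesis is that $\varphi$ becomes isotropic over $F(\psi)$, not that $\psi$ becomes hyperbolic over $F(\varphi)$; the subform theorem goes the other way, and in any case an embedding into $\psi$ would not contradict $\dim\varphi\leq 2^n<\dim\psi$. In the non-totally-singular case you invoke \Cref{Prop:IsotropySubforms}, but that proposition concerns \emph{quadratic} extensions $F(\sqrt d)$ and $F(\wp^{-1}(d))$, not function fields $F(\psi)$ with $\dim\psi>2$; you never explain how to pass from $F(\psi)$ to such a quadratic extension while preserving the isotropy of $\varphi$. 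Finally, the inductive step via the splitting tower of $\psi$ is stated as ``descending whenever $\dim\psi_1>2^n$ still holds'', but you give no argument for what happens when $\dim\psi_1\leq 2^n$, which is exactly the critical case. The genuine proof in \cite{HL06} is substantially more intricate than your sketch suggests; if you want to include a proof here rather than a citation, you should follow that source closely.
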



\section{The first isotropy index}\label{sec:FirstIsotropyIndex}

\subsection*{Main Inequality for the First Isotropy Index}

\new{The aim of this subsection is to prove that $\ione{\pi\otimes\varphi}\geq(\dim\pi)\,\ione{\varphi}$. In order to do that, we first need a characteristic $2$ version of a classical result stating that if $\varphi$ is divisible by a round form $\pi$, then both $\varphi_\an$ and the hyperbolic part of $\varphi$ are divisible by $\pi$, and hence $\dim\pi$ divides $\iw{\varphi}$; see \cite[Th.~2]{WadsShap77}.  
}

\begin{proposition} \label{Prop:DivideFactor}
    Let $\pi$ be an anisotropic round bilinear form  and $\varphi$ be a quadratic form of type $(r,s)$. 
    If $\pi\otimes \varphi$ is isotropic, then there exists an anisotropic form $\varphi_1$ and integers $j, k\in\N$ such that 
    \[\pi\otimes\varphi \cong \pi\otimes(\varphi_1\perp k\times\H\perp j\times \qf0),\]
    $\pi\otimes\varphi_1$ is anisotropic and $\varphi_1\perp k\times\H\perp j\times \qf0$ is of type $(r,s)$.
    In particular, we have
    \[\iw{\pi\otimes\varphi} = k\dim\pi\text{ and }\iql{\pi\otimes\varphi} = j\dim\pi.\]
\end{proposition}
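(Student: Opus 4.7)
The plan is to split $\varphi$ into its nonsingular and totally singular parts, since the Witt index and the defect of $\pi\otimes\varphi$ live in these pieces independently. Writing $\varphi\cong\psi\perp\eta$ with $\psi$ nonsingular of type $(r,0)$ and $\eta$ totally singular of type $(0,s)$, the tensor product $\pi\otimes\varphi\cong(\pi\otimes\psi)\perp(\pi\otimes\eta)$ has $\pi\otimes\psi$ nonsingular and $\pi\otimes\eta$ totally singular, so $\iw{\pi\otimes\varphi}=\iw{\pi\otimes\psi}$ and $\iql{\pi\otimes\varphi}=\iql{\pi\otimes\eta}$. It therefore suffices to produce anisotropic $\sigma$ (nonsingular) and $\tau$ (totally singular) with
\[\pi\otimes\psi\cong\pi\otimes(\sigma\perp k\times\H)\quad\text{and}\quad\pi\otimes\eta\cong\pi\otimes(\tau\perp j\times\qf{0}).\]
Then $\varphi_1:=\sigma\perp\tau$ will be anisotropic and $\pi\otimes\varphi_1\cong(\pi\otimes\sigma)\perp(\pi\otimes\tau)$ will be anisotropic, since its summands are of different types and hence do not interact; moreover $\varphi_1\perp k\times\H\perp j\times\qf{0}$ will have type $(r-k,s-j)+(k,0)+(0,j)=(r,s)$ as required.

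For the totally singular part, I would write $\pi\cong\bif{a_1,\dots,a_n}$ and $\eta\cong\qf{c_1,\dots,c_s}$. Roundness of $\pi$ forces $A:=D(\pi)\cup\{0\}$ to be an $F^2$-subfield of $F$ of $F^2$-dimension $n$ (the multiplicative closure comes from $G(\pi)=D(\pi)$ being a group; the additive closure from $A$ being the $F^2$-span of the $a_i$, which are $F^2$-linearly independent by anisotropy). The entries $\{a_ic_j\}$ of $\pi\otimes\eta\cong\qf{a_ic_j\colon i,j}$ generate, as an $F^2$-subspace of $F$, the $A$-submodule $A\cdot\spn_{F^2}\{c_1,\dots,c_s\}$. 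Choosing an $A$-basis $c_{j_1},\dots,c_{j_t}$ from among the $c_j$, the subform $\tau:=\qf{c_{j_1},\dots,c_{j_t}}\subseteq\eta$ satisfies that $\pi\otimes\tau$ is anisotropic of dimension $tn$ and has the same coefficient span as $\pi\otimes\eta$; a dimension count then yields $\iql{\pi\otimes\eta}=(s-t)n=j\dim\pi$ with $j=s-t$, and consequently $\pi\otimes\eta\cong(\pi\otimes\tau)\perp jn\times\qf{0}\cong\pi\otimes(\tau\perp j\times\qf{0})$.

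For the nonsingular part, I would induct on $\iw{\pi\otimes\psi}$, relying on the following characteristic $2$ analogue of the Wadsworth--Shapiro lemma: if $\pi$ is anisotropic round bilinear and $\pi\otimes\psi$ is isotropic for $\psi$ nonsingular, then $\psi$ admits a decomposition $\psi\cong[a,b]\perp\psi'$ with $\pi\otimes[a,b]\cong\pi\otimes\H$. The argument for this lemma uses an isotropic vector of $\pi\otimes\psi$ to expose a slot $[a,b]$ of $\psi$ whose first entry lies in $D(\pi)=G(\pi)$; using $[a,b]\cong a[1,ab]$ together with the roundness identity $a\pi\cong\pi$, the product $\pi\otimes[a,b]\cong\pi\otimes[1,ab]$ becomes a quadratic Pfister form, which is hyperbolic as soon as it is isotropic. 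Iteratively splitting off such planes gives $\pi\otimes\psi\cong\pi\otimes(\sigma\perp k\times\H)$ with $\sigma$ anisotropic nonsingular and $k=\iw{\pi\otimes\psi}/\dim\pi$, completing the proof once combined with the totally singular case. The main obstacle is precisely this Wadsworth--Shapiro step in the nonsingular case: the characteristic $2$ formulation requires a careful base change to realize a similarity factor of $\pi$ as the first entry of some nonsingular slot of $\psi$, so that the isotropy of $\pi\otimes\psi$ can be localized to a single hyperbolic $\pi\otimes[a,b]$.
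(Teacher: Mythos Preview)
Your separation strategy has a genuine gap: in characteristic $2$, the Witt index of a form is \emph{not} determined by its nonsingular part, so the claims ``$\iw{\pi\otimes\varphi}=\iw{\pi\otimes\psi}$'' and ``$\pi\otimes\sigma\perp\pi\otimes\tau$ is anisotropic because its summands are of different types and hence do not interact'' are both false. A concrete obstruction: for anisotropic $[1,a]$ one has $[1,a]\perp\qf{1}\cong\H\perp\qf{1}$ (take the isotropic vector $e_1+e_3$), so the Witt index jumps from $0$ to $1$ upon adjoining a totally singular piece. Thus even if you succeed in producing $\sigma$ and $\tau$ with $\pi\otimes\sigma$ and $\pi\otimes\tau$ individually anisotropic, the sum $\pi\otimes(\sigma\perp\tau)$ can very well be isotropic, and the integers $k,j$ you extract need not be $\iw{\pi\otimes\varphi}/\dim\pi$ and $\iql{\pi\otimes\varphi}/\dim\pi$. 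Only the defect equality $\iql{\pi\otimes\varphi}=\iql{\pi\otimes\eta}$ is safe, since $\ql{\pi\otimes\varphi}=\pi\otimes\eta$.

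There is a secondary issue in your nonsingular step: you invoke that $\pi\otimes[1,ab]$ is a quadratic Pfister form (hence hyperbolic once isotropic), but the proposition is stated for an arbitrary anisotropic round bilinear form $\pi$, not a Pfister form. The paper's proof sidesteps both problems at once: rather than separating types, it uses an isotropic vector of $\pi\otimes\varphi$ to read off elements $y_i,z_j\in D(\pi)=G(\pi)$ and scalars $\alpha_i\in D([a_i,b_i])$ with $\sum y_i\alpha_i+\sum z_jc_jx_j^2=0$, then sets $\rho=\bigperp y_i[a_i,b_i]\perp\qf{z_1c_1,\ldots,z_sc_s}$. This $\rho$ is itself isotropic and of the same type $(r,s)$, while roundness gives $\pi\otimes\rho\cong\pi\otimes\varphi$. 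Passing to $\rho_\an$ and iterating yields the claim without ever splitting $\varphi$ into nonsingular and totally singular parts.
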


\begin{proof}
    There are $a_1,\ldots, a_r,b_1,\ldots, b_r, c_1,\ldots, c_s\in F$ with \[\varphi\cong[a_1,b_1] \perp\ldots\perp[a_r,b_r]\perp\qf{c_1,\ldots, c_s}.\]
    Since
    \begin{align*}
        \pi\otimes\varphi &\cong \pi\otimes([a_1,b_1] \perp\ldots\perp[a_r,b_r]\perp\qf{c_1,\ldots, c_s})\\
        &\cong \bigperp_{i=1}^r\pi\otimes[a_i,b_i]\perp\bigperp_{j=1}^s\pi\otimes\qf{c_j}
    \end{align*}
    is isotropic, there are $y_1,\ldots, y_r,z_1,\ldots, z_s\in D(\pi)=G(\pi), \alpha_i\in D([a_i,b_i])$ and $x_1,\ldots, x_s\in F$ such that
    \[\sum\limits_{i=1}^r y_i\alpha_i+\sum\limits_{j=1}^sz_jc_jx_j^2=0.\]
    The form
    \[\rho= y_1[a_1,b_1] \perp\ldots\perp y_r[a_r,b_r]\perp\qf{z_1c_1,\ldots, z_sc_s}\] 
    is then clearly isotropic and of the same type as $\varphi$. 
    Using that $\pi$ is round, we readily see the isometry
    \[\pi\otimes\rho\cong\pi\otimes\varphi.\]
    If $\pi\otimes\rho_{\an}$ is anisotropic, we are done.
    Otherwise, we can apply the same procedure to $\rho_{\an}$ instead of $\varphi$ and will eventually come to the conclusion.
\end{proof}

\begin{corollary}\label{Cor:EasyEstimateForIone}
    Let $\varphi$ be a form of dimension at least 2 and let $\pi$ be an anisotropic bilinear Pfister form such that $\pi\otimes\varphi$ is anisotropic.
    Then $\ione{\pi\otimes\varphi}\geq\dim\pi$.
\end{corollary}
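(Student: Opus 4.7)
The plan is to unwind the definition $\ione{\pi\otimes\varphi}=\iti{(\pi\otimes\varphi)_{F(\pi\otimes\varphi)}}$ and apply \Cref{Prop:DivideFactor} over the extension $E:=F(\pi\otimes\varphi)$. For this to work, we need two inputs: that $\pi$ remains anisotropic (hence round) over $E$, and that $(\pi\otimes\varphi)_E$ is isotropic.

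First I would dispose of the trivial case $\dim\pi=1$: here $\pi\cong\bif{1}$ and $\ione{\pi\otimes\varphi}=\ione{\varphi}\geq 1$ since $\dim\varphi\geq 2$ forces $\varphi_{F(\varphi)}$ to be isotropic. So assume $\dim\pi=2^n$ with $n\geq 1$. Note also that $\pi\otimes\varphi$ is irreducible: it is anisotropic, and its dimension $(\dim\pi)(\dim\varphi)\geq 2\dim\pi\geq 4$ excludes the two types of reducibility recalled before \Cref{Lemma:FunFieldPurTransc}.

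The key step is to show that $\pi_E$ is anisotropic. Since $\pi$ and $\pi\otimes\varphi$ are both anisotropic and
\[\dim\pi=2^n\leq 2^n<2^{n+1}\leq 2\dim\pi\leq\dim(\pi\otimes\varphi),\]
Hoffmann's Separation Theorem (\Cref{thm:HoffmannSeparation}) applies with the roles of $\varphi$ and $\psi$ played by $\pi$ and $\pi\otimes\varphi$ respectively, giving that $\pi_E$ stays anisotropic. As an anisotropic bilinear Pfister form, $\pi_E$ is then round by \cite[Corollary~6.2]{ElmanKarpenkoMerkurjev2008}.

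Now $(\pi\otimes\varphi)_E\cong\pi_E\otimes\varphi_E$ is isotropic by construction of the function field. Applying \Cref{Prop:DivideFactor} to the anisotropic round form $\pi_E$ and $\varphi_E$, the Witt index and defect of $\pi_E\otimes\varphi_E$ are both multiples of $\dim\pi$, so
\[\ione{\pi\otimes\varphi}=\iti{(\pi\otimes\varphi)_E}=(k+j)\dim\pi\]
for some $k,j\in\N$ with $k+j\geq 1$, yielding the desired inequality. The only nontrivial point in the whole argument is the appeal to Hoffmann's Separation Theorem to keep $\pi$ anisotropic after the function field extension; everything else follows directly from the previously established \Cref{Prop:DivideFactor} and the roundness of anisotropic bilinear Pfister forms.
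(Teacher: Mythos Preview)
Your proposal is correct and follows essentially the same approach as the paper: both arguments apply \Cref{Prop:DivideFactor} over $E=F(\pi\otimes\varphi)$ after invoking Hoffmann's Separation Theorem (\Cref{thm:HoffmannSeparation}) to ensure that $\pi_E$ remains anisotropic, hence round. Your version is simply more explicit about the trivial case $\dim\pi=1$ and the irreducibility of $\pi\otimes\varphi$, which the paper leaves implicit.
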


\begin{proof}
    We have 
    \[\ione{\pi\otimes \varphi}=\iti{(\pi\otimes \varphi)_{F(\pi\otimes \varphi)}}=\iti{\pi_{F(\pi\otimes \varphi)}\otimes \varphi_{F(\pi\otimes \varphi)}}.\]
    The form $\pi_{F(\pi\otimes \varphi)}$ is a bilinear Pfister form and thus round. 
    It is further anisotropic by \Cref{thm:HoffmannSeparation} and thus, the assertion follows from \Cref{Prop:DivideFactor}.
\end{proof}

\begin{theorem} \label{Th:IoneIneq}
    Let $\varphi$ be a form of dimension at least 2 and $\pi$ be similar to an anisotropic bilinear Pfister form such that $\pi\otimes\varphi$ is anisotropic over F.
    We then have
    \[\ione{\pi\otimes\varphi}\geq (\dim\pi)\,\ione\varphi.\]
\end{theorem}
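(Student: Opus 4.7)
The plan is to analyse the form $\pi\otimes\varphi$ over its own function field. Set $L := F(\pi\otimes\varphi)$. Since $\dim\pi$ is a power of $2$ and $\dim\pi < \dim(\pi\otimes\varphi)$, \Cref{thm:HoffmannSeparation} guarantees that $\pi$ remains anisotropic over $L$. \Cref{Prop:DivideFactor} applied to the isotropic form $(\pi\otimes\varphi)_L$ then produces non-negative integers $k, j$ with $k+j\geq 1$ and an anisotropic form $\varphi_1$ over $L$ such that $(\pi\otimes\varphi)_L \cong \pi\otimes(\varphi_1 \perp k\times\H \perp j\times\qf 0)$ with $\pi\otimes\varphi_1$ anisotropic. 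Hence $\ione{\pi\otimes\varphi} = (k+j)\dim\pi$, and the theorem reduces to proving the integer inequality $k+j \geq \ione{\varphi}$.

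The first attempt is the following direct route. Since $\pi\otimes\H \cong \dim\pi\times\H$ and $\pi\otimes\qf 0 \cong \dim\pi\times\qf 0$, isotropy of $\varphi_L$ produces matching isotropy in $(\pi\otimes\varphi)_L$: from $\iw{(\pi\otimes\varphi)_L} \geq \dim\pi\cdot\iw{\varphi_L}$ and the analogous defect bound, one obtains $k \geq \iw{\varphi_L}$ and $j \geq \iql{\varphi_L}$, so $k+j \geq \iti{\varphi_L}$. If $\varphi_L$ is properly isotropic (meaning isotropic, and with $\iw{\varphi_L}>0$ whenever $\varphi$ is not totally singular), then \Cref{cor:FirstWittIndexMinimal} gives $\iti{\varphi_L}\geq\ione{\varphi}$ and we are done. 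More generally, under the weaker assumption that $\varphi_L$ is merely isotropic, one can invoke \Cref{IoneIsotropyEquivalence} with $\psi := \pi\otimes\varphi$ and the theorem's $\varphi := \varphi$: the hypothesis $\varphi_L=\varphi_{F(\pi\otimes\varphi)}$ isotropic yields $\dim(\pi\otimes\varphi) - \ione{\pi\otimes\varphi} \leq \dim\varphi - \ione{\varphi}$, which rearranges to $\ione{\pi\otimes\varphi} \geq (\dim\pi-1)\dim\varphi + \ione{\varphi} \geq \dim\pi\cdot\ione{\varphi}$, using $\dim\varphi\geq\ione{\varphi}$.

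The principal obstacle is the generic behaviour: applying \Cref{thm:HoffmannSeparation} in the other direction, namely to $\varphi$ inside $\pi\otimes\varphi$, observe that since $\dim\pi\geq 2$ there is always a power of $2$ in $[\dim\varphi,\dim\pi\cdot\dim\varphi)$, so $\varphi$ typically stays anisotropic over $L$ and neither of the above routes applies. To handle this remaining case I would exploit the auxiliary form $\tau := \varphi_1 \perp k\times\H \perp j\times\qf 0$ from \Cref{Prop:DivideFactor}, which has the same type as $\varphi_L$, satisfies $\pi\otimes\tau\cong\pi\otimes\varphi_L$, but whose total isotropy index equals $k+j>0$ while $\iti{\varphi_L}=0$. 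Since the slots of $\tau$ are obtained from those of $\varphi_L$ by scaling with similarity factors of the round form $\pi_L$, the sum $\varphi_L\perp\tau$ lies in the annihilator of $\pi$ in the Witt ring, so an Arason--Pfister--type estimate forces its anisotropic part to either vanish or have dimension a multiple of $\dim\pi$; tracking this via the subform relation $\pi\otimes\varphi_1 \subseteq \pi\otimes\varphi_L$ should allow one to conclude $k+j \geq \ione{\varphi}$. Making this structural argument precise---in particular reconciling the nonsingular and defect contributions in characteristic $2$---is the technical heart of the proof.
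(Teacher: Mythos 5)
There is a genuine gap, and you correctly diagnose where it lies: by Hoffmann's separation theorem, $\varphi$ remains anisotropic over $L = F(\pi\otimes\varphi)$ whenever $\dim\pi\geq 2$ and $\dim\varphi\geq 2$, so the two routes you work out in detail (via \Cref{cor:FirstWittIndexMinimal}, or via \Cref{IoneIsotropyEquivalence} with $\varphi_L$ isotropic) apply essentially never. The case that actually matters is the generic one, and there your argument is only a sketch. The proposed annihilator/Arason--Pfister step is not substantiated: $\tau$ lives over $L$, not $F$, so $\iti{\tau}=k+j$ carries no evident relation to $\ione{\varphi}=\iti{\varphi_{F(\varphi)}}$, which is an index over a completely different field; knowing that $(\varphi_L\perp\tau)_\an$ has dimension a multiple of $\dim\pi$ does not, as far as I can see, bound $k+j$ from below by $\ione\varphi$. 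Without that step the theorem remains unproved.

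The paper takes a different and much more direct route: choose a dominated form $\varphi'\preccurlyeq\varphi$ with $\dim\varphi' = \dim\varphi-\ione\varphi+1$. By \Cref{Lem:SubFormOfIsotropicFormIsIsotropic}, $\varphi'$ becomes isotropic over $F(\varphi)$, and then the crucial ingredient you do not use---\Cref{Th:IsotropyPfisterMultiples}, which transfers isotropy from $\varphi'$ over $F(\varphi)$ to $\pi\otimes\varphi'$ over $F(\pi\otimes\varphi)$---shows $\pi\otimes\varphi'$ is isotropic over $L$. Since $\pi\otimes\varphi$ is also isotropic over $F(\pi\otimes\varphi')$, the equality case of \Cref{IoneIsotropyEquivalence} applied to the pair $(\pi\otimes\varphi,\pi\otimes\varphi')$ gives
\[\ione{\pi\otimes\varphi}=\ione{\pi\otimes\varphi'}+(\dim\pi)(\ione\varphi-1),\]
and the base estimate $\ione{\pi\otimes\varphi'}\geq\dim\pi$ from \Cref{Cor:EasyEstimateForIone} finishes the proof. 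In short: the paper applies \Cref{IoneIsotropyEquivalence} to a pair of Pfister multiples $\pi\otimes\varphi$ and $\pi\otimes\varphi'$ (where the mutual-isotropy hypothesis can be verified via \Cref{Th:IsotropyPfisterMultiples}), whereas you tried to apply it to the mixed pair $\pi\otimes\varphi$ and $\varphi$, for which the required isotropy fails for dimension reasons. Your reduction to $k+j\geq\ione\varphi$ via \Cref{Prop:DivideFactor} is sound, but the subform/transfer idea is what you are missing to close it.
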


\begin{proof}
    If $\ione{\varphi}=1$, the assertion follows from \Cref{Cor:EasyEstimateForIone}.
    Let thus now $\ione{\varphi}>1$.
    Let $\varphi'\preccurlyeq \varphi$  be a form of dimension $\dim\varphi-\ione{\varphi}+1$. 
    Then, by \Cref{Lem:SubFormOfIsotropicFormIsIsotropic}, $\varphi'_{F(\varphi)}$ is isotropic.
    By \Cref{Th:IsotropyPfisterMultiples}, $\pi\otimes\varphi'$ is isotropic over $F(\pi\otimes\varphi)$.
    Since $\pi\otimes\varphi$ is anisotropic over $F$ by assumption and $\pi\otimes\varphi'\preccurlyeq \pi\otimes\varphi$, we have that $\pi\otimes\varphi'$ is anisotropic over $F$.
    Further, $\pi\otimes\varphi$ is clearly isotropic over $F(\pi\otimes\varphi')$ and thus, \Cref{IoneIsotropyEquivalence} implies
    \[\dim(\pi\otimes\varphi)-\ione{\pi\otimes\varphi}= \dim(\pi\otimes\varphi')- \ione{\pi\otimes\varphi'},\]
    which is equivalent to
    \begin{align*}
        \ione{\pi\otimes\varphi}&= \ione{\pi\otimes\varphi'} +(\dim\pi)\,(\dim\varphi-\dim\varphi')\\
        &=\ione{\pi\otimes\varphi'} +(\dim\pi)\,(\ione{\varphi}-1).
    \end{align*}
    Since $\ione{\pi\otimes\varphi'}\geq\dim\pi$ by \Cref{Cor:EasyEstimateForIone}, the assertion follows.
\end{proof}

Later, in \Cref{Th:GenericPfisterMultiple}, we will see a situation in which we have equality in \Cref{Th:IoneIneq}.
The following example shows that also a strict inequality is possible.

\begin{example}\label{ex:StrictInequalityIOne}
    Let $F=\F_2(a,b,t)$ with algebraically independent indeterminates $a,b,t$.
    \begin{enumerate}
    \item
    Let $\varphi=[1, a]\perp t[1, b]$. 
    By \Cref{Lemma:IndicesResidueForms}, $\varphi$ is anisotropic.
    We clearly have $\ione{\varphi}\in\{1,2\}$.
    Since $a,b$ are independent indeterminates, it is clear that $\varphi$ is not similar to a 2-fold Pfister form and thus $\ione{\varphi}=1$ by \cite[Theorem~4.2(ii)]{HL06}.
    
    We further consider the anisotropic bilinear Pfister form $\pi=\bipf{a+b}$.  
    The forms
    \[\bipf{a+b}\otimes[1,a]\perp\bipf{a+b}\otimes[1,b]\text{ and }\bipf{a+b}\otimes[1,a+b]\]
    are Witt equivalent by \cite[Lemma 15.1]{ElmanKarpenkoMerkurjev2008}.
    Since the latter form is clearly isotropic, hence hyperbolic as a quadratic Pfister form, we conclude that $\bipf{a+b}\otimes[1,a]$ and $\bipf{a+b}\otimes[1,b]$ are isometric.
    Clearly, these forms are anisotropic by \Cref{Lemma:IndicesResidueForms} applied to $\F_2(b)(X)$ with $X=a+b$.
    Thus
    \begin{align*}
        \pi\otimes\varphi=\pi\otimes[1,a]\perp t\pi\otimes[1,b]\cong \bipf{t, a+b}\otimes[1,a]
    \end{align*}
    is an anisotropic 3-fold quadratic Pfister form.
    Its first isotropy index is thus given by 
    \[\ione{\pi\otimes\varphi}=4>2\cdot 1=(\dim\pi)\,\ione{\varphi}.\]
    \item  For an example in the case of totally singular quadratic forms, consider $\varphi\cong\sqf{1,a,t,abt}$ and $\pi\cong\bipf{b}$. 
    Then $\pi\otimes\varphi\cong\spf{a,b,t}$ is a 3-fold quasi-Pfister form, and hence $\ione{\pi\otimes\varphi}=4$ by \cite[Proposition 3.3(2)(i)]{Lag04}. 
    On the other hand, we have $1\leq\ione{\varphi}\leq\iti{\varphi_L}=\iql{\varphi_L}$ for any $L/F$ such that $\varphi_L$ is isotropic by \Cref{cor:FirstWittIndexMinimal}; in particular, $\ione{\varphi}\leq\iql{\varphi_{F(\sqrt{a})}}=1$, so $\ione{\varphi}=1$. 
    Hence, we have
    \[\ione{\pi\otimes\varphi}=4>2\cdot1=(\dim\pi)\,\ione{\varphi}.\]
    \end{enumerate}
\end{example}

As a consequence of \Cref{Th:IoneIneq}, we get a sufficient condition on a form dominated by $\pi\otimes\varphi$ to become isotropic over $F(\pi\otimes\varphi)$.

\begin{corollary}\label{cor:BigSubformsOfProductsBecomeIsotropic}
    Let $\varphi$ a form of dimension at least two and $\pi$ similar to a bilinear Pfister form be such that $\pi\otimes\varphi$ is anisotropic over $F$. 
    If $\psi\preccurlyeq \pi\otimes\varphi$ is a dominated form over F of codimension less than $(\dim\pi)\,\ione{\varphi} $, then $\psi$ is isotropic over $F(\pi\otimes\varphi)$.
\end{corollary}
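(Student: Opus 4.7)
The plan is to view $\psi$ as dominated by $\pi\otimes\varphi$ over the function field $F(\pi\otimes\varphi)$ and then apply the dimension criterion for isotropy of dominated forms from \Cref{Lem:SubFormOfIsotropicFormIsIsotropic}. The only real content beyond this bookkeeping is the dimension estimate, which comes directly from the main inequality \Cref{Th:IoneIneq} proved just above.

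More precisely, I would first note that dominance is preserved under base change, so $\psi_{F(\pi\otimes\varphi)}\preccurlyeq (\pi\otimes\varphi)_{F(\pi\otimes\varphi)}$. By the very definition of the first isotropy index applied to the anisotropic form $\pi\otimes\varphi$, we have
\[\iti{(\pi\otimes\varphi)_{F(\pi\otimes\varphi)}}=\ione{\pi\otimes\varphi}.\]
The hypothesis on codimension translates into $\dim\psi>\dim(\pi\otimes\varphi)-(\dim\pi)\,\ione{\varphi}$, i.e.\
\[\dim\psi\geq \dim(\pi\otimes\varphi)-(\dim\pi)\,\ione{\varphi}+1.\]

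Now I invoke \Cref{Th:IoneIneq}, whose hypotheses are exactly the standing assumptions of the corollary (namely, $\dim\varphi\geq 2$, $\pi$ similar to a bilinear Pfister form, and $\pi\otimes\varphi$ anisotropic over $F$), to deduce $\ione{\pi\otimes\varphi}\geq(\dim\pi)\,\ione{\varphi}$. Combining this with the previous inequality yields
\[\dim\psi\geq \dim(\pi\otimes\varphi)-\ione{\pi\otimes\varphi}+1=\dim(\pi\otimes\varphi)_{F(\pi\otimes\varphi)}-\iti{(\pi\otimes\varphi)_{F(\pi\otimes\varphi)}}+1.\]
Applying \Cref{Lem:SubFormOfIsotropicFormIsIsotropic} to $\psi_{F(\pi\otimes\varphi)}\preccurlyeq(\pi\otimes\varphi)_{F(\pi\otimes\varphi)}$ then concludes that $\psi_{F(\pi\otimes\varphi)}$ is isotropic.

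There is essentially no obstacle here; the proof is a one-line combination of \Cref{Th:IoneIneq} with \Cref{Lem:SubFormOfIsotropicFormIsIsotropic}. The only small point to be careful about is that dominance (as opposed to being a subform) does behave well under scalar extension, which is immediate since if $\psi\cong(\pi\otimes\varphi)|_U$ for a subspace $U\subseteq V$, then $\psi_{F(\pi\otimes\varphi)}\cong(\pi\otimes\varphi)_{F(\pi\otimes\varphi)}|_{U_{F(\pi\otimes\varphi)}}$; no separability or singularity considerations are required since the dominance criterion in \Cref{Lem:SubFormOfIsotropicFormIsIsotropic} uses only the total isotropy index.
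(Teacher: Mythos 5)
Your proof is correct and follows exactly the same route as the paper's: combine \Cref{Th:IoneIneq} with \Cref{Lem:SubFormOfIsotropicFormIsIsotropic} after translating the codimension bound into a dimension inequality. Your write-up just spells out the base-change step more explicitly, but there is no substantive difference.
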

\begin{proof}
    By \Cref{Th:IoneIneq}, we have $(\dim\pi)\,\ione \varphi\leq\ione{\pi\otimes\varphi}$ and thus 
    \[\dim\psi\geq \dim(\pi\otimes\varphi)-\ione{\pi\otimes\varphi}+1.\]
    Now, \Cref{Lem:SubFormOfIsotropicFormIsIsotropic} implies $\psi$ to be isotropic over $F(\pi\otimes\varphi)$.
\end{proof}

\subsection*{Conditions for Equality}

After having established a general inequality between the first isotropy indices of a quadratic form $\varphi$ and its Pfister multiples, we now look for situations in which this inequality is fulfilled with equality.

\new{In the following proposition, we start with a condition that includes the standard splitting tower of $\varphi$. 
Additionally, we also look at the second isotropy index. This proposition is the characteristic 2 counterpart of \cite[Prop.~2.6]{OShea16}. 
Note that in part \ref{item:HigherTotalIndices2}, we need to assume that $\pi\otimes\varphi$ is not a quadratic Pfister form, as otherwise the first kernel form $(\pi\otimes\varphi)_1$ is the zero form and $\itwo{\pi\otimes\varphi}$ is not defined. 
Moreover, note that if $\varphi$ is a quadratic Pfister form, then $\pi\otimes\varphi$ is also a quadratic Pfister form. 
Therefore, we need to assume that $\varphi$ is of height at least two. 
Unfortunately, for our proof, we need to know not only that $\varphi_1$ is nonzero, but also that it is not totally singular.
The reason is that the higher isotropy indices of totally singular forms are less ``well-behaved", in that not every anisotropic dimension over a field extension also occurs as the dimension of some kernel form, see \cite[Example 8.15]{HL04}.
Thus the behavior of $\varphi_2$ would not be predictable if $\varphi_1$ was totally singular; therefore, we assume that the \emph{nondefective height} of $\varphi$ is at least two.}

\begin{proposition}\label{prop:HigherWittIndexOfPfisterMultiples}
    Let $\varphi$ be a form of dimension at least two and $\pi$ similar to a bilinear Pfister form be such that $\pi\otimes\varphi$ is anisotropic over $F$. 
    Denote by ${F = F_0 \subseteq F_1 \subseteq \ldots \subseteq F_h}$ the standard splitting tower of $\varphi$ and $\varphi_0\cong\varphi,\varphi_1,\ldots,\varphi_h$ the kernel forms.
    \begin{enumerate}
        \item\label{item:HigherTotalIndices1} If $\pi\otimes \varphi_{1}$ is anisotropic over $F_1$, then $\ione{\pi\otimes\varphi} = (\dim\pi)\,\ione \varphi $.
        \item\label{item:HigherTotalIndices2} 
        Suppose that $\varphi$ is a quadratic form of nondefective height at least 2 and let $\pi\otimes\varphi$ be not similar to a quadratic Pfister form.
        If $\ione{\pi\otimes\varphi} = (\dim \pi)\, \ione \varphi$ holds and $\pi\otimes\varphi_{2}$ is anisotropic over $F_2$, then $\itwo{\pi\otimes\varphi} \geq (\dim \pi)\,\itwo{\varphi}$.
    \end{enumerate}
\end{proposition}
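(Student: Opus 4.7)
My plan for (i) is to combine the lower bound of \Cref{Th:IoneIneq} with a matching upper bound supplied by the minimality of the first isotropy index in \Cref{cor:FirstWittIndexMinimal}. Writing the Witt decomposition $\varphi_{F_1}\cong\iw{\varphi_{F_1}}\times\H\perp\varphi_1\perp\iql{\varphi_{F_1}}\times\sqf{0}$ and tensoring with $\pi$, the identities $\pi\otimes\H\cong(\dim\pi)\times\H$ and $\pi\otimes\sqf{0}\cong(\dim\pi)\times\sqf{0}$, together with the hypothesis that $\pi\otimes\varphi_1$ is anisotropic, identify $\pi\otimes\varphi_1$ as the anisotropic part of $(\pi\otimes\varphi)_{F_1}$; thus $\iti{(\pi\otimes\varphi)_{F_1}}=(\dim\pi)\ione{\varphi}$. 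To invoke \Cref{cor:FirstWittIndexMinimal} for $\psi=\pi\otimes\varphi$ with the extension $F_1/F$, I need $\iw{\psi_{F_1}}>0$ whenever $\psi$ is not totally singular, but this is immediate from $\iw{\psi_{F_1}}=(\dim\pi)\iw{\varphi_{F_1}}$ and from the fact that the first step of the nondefective splitting tower of a non-totally-singular form always produces Witt isotropy. \Cref{cor:FirstWittIndexMinimal} then gives $\ione{\psi}\leq(\dim\pi)\ione{\varphi}$, and \Cref{Th:IoneIneq} supplies the opposite inequality, yielding equality.

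For (ii) the strategy is to iterate (i) one level up by applying it to $\varphi_1$ over $F_1$. The first subtask is to verify that $\pi\otimes\varphi_1$ is anisotropic over $F_1$: otherwise, \Cref{Prop:DivideFactor} applied over $F_1$ (with $\pi_{F_1}$ anisotropic by \Cref{thm:HoffmannSeparation}) would force $(\pi\otimes\varphi_1)_\an\cong\pi_{F_1}\otimes\phi$ for some $\phi$ of dimension strictly less than that of $\varphi_1$, and propagating this through $F_2=F_1(\varphi_1)$ together with the equality $\ione{\psi}=(\dim\pi)\ione{\varphi}$ would contradict the assumed anisotropy of $\pi\otimes\varphi_2$ over $F_2$. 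Once this is settled, part (i) applied to $\varphi_1$ over $F_1$ yields $\ione{\pi_{F_1}\otimes\varphi_1}=(\dim\pi)\ione{\varphi_1}$. To reinterpret this in the tower of $\psi$, I set $L=F_1(\psi)$; since $\psi_{F_1}$ is Witt equivalent to $\pi\otimes\varphi_1$, \Cref{Lemma:FunFieldPurTransc} identifies $L$ with a purely transcendental extension of $F_1(\pi\otimes\varphi_1)$, so that $\iti{\psi_L}=\iti{\psi_{F_1}}+\ione{\pi_{F_1}\otimes\varphi_1}=(\dim\pi)\itwo{\varphi}$. Because $K_1=F(\psi)\subseteq L$, this forces $\iti{(\psi_1)_L}=(\dim\pi)\ione{\varphi_1}$, and decomposing $\psi_1\cong\pi_{K_1}\otimes\varphi_1'$ via \Cref{Prop:DivideFactor} over $K_1$ and then applying \Cref{Th:IoneIneq} over $K_1$ produce $\ione{\psi_1}\geq(\dim\pi)\ione{\varphi_1'}$; adding $\ione{\psi}=(\dim\pi)\ione{\varphi}$ delivers $\itwo{\psi}\geq(\dim\pi)\itwo{\varphi}$ provided that $\ione{\varphi_1'}\geq\ione{\varphi_1}$.

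The hard part will be establishing this last inequality $\ione{\varphi_1'}\geq\ione{\varphi_1}$: the two forms share type and dimension but live over the incomparable fields $K_1$ and $F_1$, so comparing their first isotropy indices is delicate. I expect the argument to proceed on the compositum $L=F_1(\psi)=K_1(\varphi)$, exploiting that $\pi\otimes\varphi_1$ and $\psi_1$ have the same anisotropic part once scalar-extended to $L$, and controlling the isotropy of $\varphi_1$ over $L$ through \Cref{thm:HoffmannSeparation} combined with the Karpenko--Merkurjev--Totaro inequality of \Cref{IoneIsotropyEquivalence}.
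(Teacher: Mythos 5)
Your argument for part (i) is essentially the paper's: lower bound from \Cref{Th:IoneIneq}, upper bound by expanding $(\pi\otimes\varphi)_{F_1}$ via the Witt decomposition of $\varphi_{F_1}$, using anisotropy of $\pi\otimes\varphi_1$ to pin down $\iti{(\pi\otimes\varphi)_{F_1}}=(\dim\pi)\ione{\varphi}$, and finishing with \Cref{cor:FirstWittIndexMinimal} after checking $\iw{(\pi\otimes\varphi)_{F_1}}>0$ in the non-totally-singular case. No concerns there.

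For part (ii) your route diverges from the paper and has genuine problems. First, a logical point: the paper's own proof does not establish $\itwo{\pi\otimes\varphi}\geq(\dim\pi)\itwo{\varphi}$; it establishes the \emph{reverse} inequality. The chain is $\iw{(\pi\otimes\varphi)_{F_2}}=\iti{(\pi\otimes\varphi)_{F_2}}=(\dim\pi)\itwo{\varphi}>(\dim\pi)\ione{\varphi}=\iw{(\pi\otimes\varphi)_{K_1}}$, and then, since by \cite[Theorem 4.6]{LaghribiGenericSplitting} the value $\iw{(\pi\otimes\varphi)_{F_2}}$ must equal $\iw{(\pi\otimes\varphi)_{K_j}}$ for some $j$, the strict inequality over $K_1$ forces $j\geq 2$, giving $(\dim\pi)\itwo{\varphi}\geq\iw{(\pi\otimes\varphi)_{K_2}}=\itwo{\pi\otimes\varphi}$. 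So the stated ``$\geq$'' appears to be a typo for ``$\leq$''. You took the printed inequality at face value and tried to prove the harder (and, as far as the paper's proof shows, unestablished) direction.

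Independently of that sign issue, the two steps you flag as delicate really are gaps. Your opening move — arguing that $\pi\otimes\varphi_1$ must be anisotropic over $F_1$ — does not follow from the hypotheses of (ii): the given equality $\ione{\pi\otimes\varphi}=(\dim\pi)\ione{\varphi}$ together with \Cref{cor:FirstWittIndexMinimal} only gives $(\dim\pi)\ione{\varphi}\leq\iti{(\pi\otimes\varphi)_{F_1}}$, which is perfectly compatible with $\pi\otimes\varphi_1$ becoming isotropic over $F_1$ (a short computation along the lines of \Cref{Prop:DivideFactor}, using that $\pi\otimes\varphi_2$ is anisotropic over $F_2$, only rules out a defect contribution, not a hyperbolic one). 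So you cannot legitimately apply part (i) to $\varphi_1$ over $F_1$. And the final inequality $\ione{\varphi_1'}\geq\ione{\varphi_1}$ you leave as the ``hard part'' — you are comparing first isotropy indices of two forms over the incomparable fields $K_1$ and $F_1$ — is not something the paper attempts or needs. The paper's mechanism is entirely different: it is a one-paragraph argument using the type bookkeeping (nondefective height $\geq2$ forces the kernel forms to share the quasilinear part, so $((\pi\otimes\varphi)_{K_1})_\an$ is not totally singular), the explicit computation of $\iw{(\pi\otimes\varphi)_{F_2}}$, and the classification of attainable Witt indices from the standard splitting tower, with no iteration of (i), no compositum $L$, and no appeal to \Cref{Prop:DivideFactor} over $K_1$. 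I would recommend redoing (ii) along those lines, and flagging the apparent sign discrepancy between the statement and the proof.
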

\begin{proof}
    \ref{item:HigherTotalIndices1}: By \Cref{Th:IoneIneq}, we have $\ione{\pi\otimes\varphi}\geq \ione\varphi\cdot \dim\pi$.
    On the other hand, over $F_1$, we write $\varphi_{F_1}\cong i_1\times\H\perp j_1\times\qf0\perp \varphi_1$, with $i_1+j_1=\ione \varphi$.
    The form 
    \begin{align*}
        (\pi\otimes\varphi)_{F_1} \cong\pi_{F_1} \otimes (i_1\times\H\perp j_1\times\qf0\perp \varphi_1)
    \end{align*}
    clearly has total isotropy index at least $(\dim\pi)\,\ione{\varphi}$ and since $\pi\otimes\varphi_1$ is anisotropic over $F_1$, we even have equality. 

    If $\varphi$ is not totally singular, then so is $\pi\otimes\varphi$.
    Then $\iw{\varphi_{F_1}}\geq1$ and thus, $\iw{(\pi\otimes\varphi)_{F_1}}\geq1$.
    We can thus apply \Cref{cor:FirstWittIndexMinimal} in the case of a form that is not totally singular and clearly also for totally singular forms and we  thus have
    \[\ione{\pi\otimes\varphi}\leq \iti{(\pi\otimes\varphi)_{F_1}}=(\dim\pi)\,\ione{\varphi}.\]
    
    \ref{item:HigherTotalIndices2}: 
    Let $(r,s)$ be the type of $\varphi$.
    Then, since $h_\nd(\varphi)\geq2$, the type of $\varphi_1$ resp. $\varphi_2$ are given by $(r_1, s)$ resp. $(r_2, s)$ for some $0\leq r_2 < r_1 < r$.
    The type of $\pi\otimes\varphi$ is clearly given by $(r\dim\pi, s\dim\pi)$ and since $\ione{\pi\otimes\varphi} = (\dim \pi)\, \ione \varphi$, we know that the type of $((\pi\otimes\varphi)_{F(\pi\otimes\varphi)})_\an$ is given by $(r_1\dim\pi, s\dim\pi)$.
    In particular, this form is not totally singular.

    Over $F_2$, we write $\varphi_{F_2}=i_2\times\H\perp\varphi_2$.
    We then have
    \begin{align*}
        (\pi \otimes\varphi )_{F_2} \cong \pi_{F_2} \otimes (i_2\times\H\perp\varphi_2).
    \end{align*}
    Since by assumption, $(\pi\otimes\varphi_2)_{F_2}$ is anisotropic, we have 
    \begin{align*}
        \iw{(\pi\otimes\varphi)_{F_2}}&= \iti{(\pi\otimes\varphi)_{F_2}}\\
        &= (\dim\pi)\,\itwo{\varphi}&\\
        &>(\dim\pi)\,\ione{\varphi}&\\
        &=\ione{\pi\otimes\varphi}=\iw{(\pi\otimes\varphi)_{F(\pi\otimes\varphi)}}.
    \end{align*} 
    
    By \cite[Theorem 4.6]{LaghribiGenericSplitting}, we know that the possible Witt indices of $\pi\otimes\varphi$ are the ones occuring in the standard splitting tower $K_0=F, K_1, K_2,\ldots$ of $\pi\otimes\varphi$.
    The inequality $\iw{(\pi\otimes\varphi)_{F_2}} > \iw{(\pi\otimes\varphi)_{F(\pi\otimes\varphi)}}=\iw{(\pi\otimes\varphi)_{K_1}}$ therefore yields 
    \begin{align*}(\dim\pi)\,\itwo{\varphi}=\iw{(\pi\otimes\varphi)_{F_2}}
    \geq \iw{(\pi\otimes\varphi)_{K_2}}.
    \end{align*}
    Further, since $h_\nd(\pi\otimes\varphi)\geq 2$, we know $\iw{(\pi\otimes\varphi)_{K_2}}=\itwo{\pi\otimes\varphi}$
    and the assertion follows.
\end{proof}

\new{Another condition, under which we can prove that the equality $\ione{\pi\otimes\varphi} = (\dim\pi)\,\ione{\varphi}$ holds, is assuming that the reverse implication of \Cref{Th:IsotropyPfisterMultiples} holds.}

\begin{proposition}\label{prop:SuffConditionForEqualityForIOne}
    Let $\varphi$ be a form of dimension at least two such that $\pi\otimes\varphi$ is anisotropic over $F$, where $\pi$ is similar to a bilinear Pfister form. 
    For all forms $\psi$ over $F$ with $\pi\otimes\psi$ anisotropic over $F$, assume that $(\pi\otimes\psi)_{F(\pi\otimes\varphi)}$ isotropic implies that $\psi$ is isotropic over $F(\varphi)$. 
    Then $\ione{\pi\otimes\varphi} = (\dim\pi)\,\ione{\varphi}$.
\end{proposition}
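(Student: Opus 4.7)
By \Cref{Th:IoneIneq} we already have $\ione{\pi\otimes\varphi}\geq(\dim\pi)\,\ione{\varphi}$, so the task is the reverse inequality. The plan is to extract information from the hypothesis by applying it \emph{in contrapositive form}: if $\psi_{F(\varphi)}$ stays anisotropic and $\pi\otimes\psi$ is anisotropic over $F$, then $(\pi\otimes\psi)_{F(\pi\otimes\varphi)}$ is also anisotropic. Combined with the size bound on anisotropic dominated subforms given by \Cref{Lem:SubFormOfIsotropicFormIsIsotropic}, a well-chosen $\psi\preccurlyeq\varphi$ will convert this anisotropy statement into a dimension inequality on $\ione{\pi\otimes\varphi}$.

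If $\ione{\varphi}=\dim\varphi$, then $\ione{\pi\otimes\varphi}\leq\dim(\pi\otimes\varphi)=(\dim\pi)\,\ione{\varphi}$ and we are done; so assume $\dim\varphi-\ione{\varphi}\geq 1$ and choose a subform $\psi\preccurlyeq\varphi$ with $\dim\psi=\dim\varphi-\ione{\varphi}$. Since $\pi\otimes\varphi$ is anisotropic, so are $\varphi$, $\psi$, and $\pi\otimes\psi\preccurlyeq\pi\otimes\varphi$. The next step is to check that $\psi_{F(\varphi)}$ is anisotropic. If $\dim\psi=1$, then $\psi\cong\sqf{c}$ with $c\in F^{\ast}$, so $\psi_{F(\varphi)}$ is trivially anisotropic. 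If $\dim\psi\geq 2$, then $\psi$ is an anisotropic form of dimension at least $2$, hence $\ione{\psi}\geq 1$; were $\psi_{F(\varphi)}$ isotropic, \Cref{IoneIsotropyEquivalence} applied with the roles of $\varphi$ and $\psi$ interchanged would yield
\[\dim\varphi-\ione{\varphi}\leq\dim\psi-\ione{\psi}\leq\dim\psi-1=\dim\varphi-\ione{\varphi}-1,\]
a contradiction. Either way, $\psi_{F(\varphi)}$ is anisotropic.

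Now the contrapositive of the hypothesis applies and forces $(\pi\otimes\psi)_{F(\pi\otimes\varphi)}$ to be anisotropic. Since $\pi\otimes\psi\preccurlyeq\pi\otimes\varphi$, the extended form $(\pi\otimes\psi)_{F(\pi\otimes\varphi)}$ is a dominated anisotropic subform of $(\pi\otimes\varphi)_{F(\pi\otimes\varphi)}$, whose total isotropy index equals $\ione{\pi\otimes\varphi}$. Applying the contrapositive of \Cref{Lem:SubFormOfIsotropicFormIsIsotropic} gives
\[(\dim\pi)(\dim\varphi-\ione{\varphi})=\dim(\pi\otimes\psi)\leq\dim(\pi\otimes\varphi)-\ione{\pi\otimes\varphi}=(\dim\pi)\dim\varphi-\ione{\pi\otimes\varphi},\]
which simplifies to $\ione{\pi\otimes\varphi}\leq(\dim\pi)\,\ione{\varphi}$, completing the proof.

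The delicate step is the verification that $\psi_{F(\varphi)}$ is anisotropic, because exactly this is what triggers the contrapositive of the hypothesis. Choosing $\dim\psi=\dim\varphi-\ione{\varphi}$ is the largest dimension for which the non-trivial range ($\dim\psi\geq 2$) still allows us to derive a contradiction from \Cref{IoneIsotropyEquivalence}; the boundary case $\dim\psi=1$ needs to be handled separately and fortunately is free, since $1$-dimensional anisotropic forms remain anisotropic under any field extension.
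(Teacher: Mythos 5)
Your proof is correct and follows essentially the same route as the paper's: pick $\psi\preccurlyeq\varphi$ of dimension $\dim\varphi-\ione{\varphi}$, use \Cref{IoneIsotropyEquivalence} to see that $\psi_{F(\varphi)}$ is anisotropic, and then combine the (contrapositive of the) hypothesis with \Cref{Lem:SubFormOfIsotropicFormIsIsotropic} to bound $\ione{\pi\otimes\varphi}$ from above; the paper phrases this as a proof by contradiction rather than a direct derivation, but the ingredients and the choice of $\psi$ are identical. Your explicit treatment of the boundary cases $\dim\psi\leq1$ is a small extra precaution that the paper leaves implicit.
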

\begin{proof}
    By \Cref{Th:IoneIneq}, we already know the inequality $\ione{\pi\otimes\varphi}\geq(\dim\pi)\ione{\varphi}$.
    To complete the proof, we show that if this inequality is strict, we consider any form $\psi\preccurlyeq \varphi$ of codimension $\ione{\varphi}$ and show that $\pi\otimes\psi$ and $\psi_{F(\varphi)}$ are anisotropic, but $(\pi\otimes\psi)_{F(\pi\otimes\varphi)}$ is isotropic.
    
    Since we have $\dim\psi=\dim\varphi-\ione{\varphi}$, we clearly have $\dim\psi-\ione{\psi}<\dim\varphi-\ione{\varphi}$ and thus, \Cref{IoneIsotropyEquivalence} implies that $\psi_{F(\varphi)}$ is anisotropic.
    But on the other hand, we have $\pi\otimes\psi\preccurlyeq\pi\otimes\varphi$ and 
    \[\dim(\pi\otimes\psi)=\dim(\pi\otimes\varphi)-\dim(\pi)\,\ione{\varphi}> \dim(\pi\otimes\varphi)-\ione{\pi\otimes\varphi}\]
    and so, \Cref{Lem:SubFormOfIsotropicFormIsIsotropic} implies $(\pi\otimes\psi)_{F(\pi\otimes\varphi)}$ to be isotropic. 
\end{proof}

As noted in \Cref{Sec:Intro}, the isotropy of $(\pi\otimes\psi)_{F(\pi\otimes\varphi)}$ does not generally imply that $\psi_{F(\varphi)}$ is isotropic. 
In the following proposition, we impose some additional assumptions on $\varphi$, $\psi$ and $\pi$, under which the implication holds.

\begin{proposition}\label{prop:2.13}
    Let $\varphi$ and $\psi$ be forms of dimension at least two and let $\pi$ be a bilinear Pfister form such that $\pi\otimes\varphi$ and $\pi\otimes\psi$ are anisotropic over $F$. 
    Suppose that $\ione{\pi \otimes \varphi} = (\dim\pi)\,\ione{\varphi}$ and that $\varphi_{F(\psi)}$ and $(\pi\otimes\psi)_{F(\pi\otimes\varphi)}$ are isotropic. 
    Then $\psi_{F(\varphi)}$ is isotropic.
\end{proposition}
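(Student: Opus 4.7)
The plan is to derive the conclusion from a sandwich argument based on two applications of \Cref{IoneIsotropyEquivalence} (the Karpenko--Merkurjev/Totaro inequality). First, I would apply it to the pair $(\varphi,\psi)$: since $\varphi_{F(\psi)}$ is isotropic, it yields
\[\dim\psi-\ione{\psi}\leq\dim\varphi-\ione{\varphi},\]
with equality equivalent to the isotropy of $\psi_{F(\varphi)}$. So the whole task reduces to establishing the reverse inequality $\dim\varphi-\ione{\varphi}\leq\dim\psi-\ione{\psi}$, because then the two inequalities will force equality, and \Cref{IoneIsotropyEquivalence} will deliver the desired conclusion.

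Second, I would apply \Cref{IoneIsotropyEquivalence} to the pair $(\pi\otimes\varphi,\pi\otimes\psi)$. Both are anisotropic over $F$ by hypothesis, and $(\pi\otimes\psi)_{F(\pi\otimes\varphi)}$ is isotropic, so the theorem yields
\[\dim(\pi\otimes\varphi)-\ione{\pi\otimes\varphi}\leq\dim(\pi\otimes\psi)-\ione{\pi\otimes\psi}.\]
Now I would plug in two ingredients: the hypothesis $\ione{\pi\otimes\varphi}=(\dim\pi)\,\ione{\varphi}$ turns the left-hand side into $(\dim\pi)(\dim\varphi-\ione{\varphi})$; on the right, \Cref{Th:IoneIneq} applied to $\psi$ and $\pi$ gives $\ione{\pi\otimes\psi}\geq(\dim\pi)\,\ione{\psi}$, upper-bounding the right-hand side by $(\dim\pi)(\dim\psi-\ione{\psi})$. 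After dividing by $\dim\pi$, I obtain exactly the missing inequality
\[\dim\varphi-\ione{\varphi}\leq\dim\psi-\ione{\psi},\]
which closes the argument.

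I do not anticipate any serious obstacle: the proof is a short pinching argument combining the two key tools (\Cref{IoneIsotropyEquivalence} and \Cref{Th:IoneIneq}). The only minor point to check is that \Cref{IoneIsotropyEquivalence} applies in both instances, which requires anisotropy of all four forms $\varphi,\psi,\pi\otimes\varphi,\pi\otimes\psi$ over $F$. For the tensor products, this is assumed; for $\varphi$ and $\psi$ it is automatic, since $\varphi\preccurlyeq\pi\otimes\varphi$ (as $\pi$ contains $\bif{1}$ as a subform) and similarly $\psi\preccurlyeq\pi\otimes\psi$, so anisotropy is inherited.
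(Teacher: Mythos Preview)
Your proof is correct and follows essentially the same approach as the paper: both combine two applications of \Cref{IoneIsotropyEquivalence} (to the pairs $(\varphi,\psi)$ and $(\pi\otimes\varphi,\pi\otimes\psi)$) with \Cref{Th:IoneIneq} to sandwich $\dim\varphi-\ione{\varphi}$ and $\dim\psi-\ione{\psi}$ into equality, and then invoke the equality clause of \Cref{IoneIsotropyEquivalence}. Your additional remark that the anisotropy of $\varphi$ and $\psi$ is inherited from that of $\pi\otimes\varphi$ and $\pi\otimes\psi$ is a nice explicit check that the paper leaves implicit.
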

\begin{proof}
    Since $(\pi\otimes\psi)_{F(\pi\otimes\varphi)}$ is isotropic, \Cref{IoneIsotropyEquivalence} and the assumption imply 
    \begin{align*}    
        \dim(\pi\otimes\psi)-\ione{\pi\otimes\psi}&\geq\dim(\pi\otimes\varphi)-\ione{\pi\otimes\varphi}\\
        &=\dim(\pi\otimes\varphi)-(\dim\pi)\,\ione{\varphi}\\
        &=(\dim\pi)\,(\dim\varphi-\ione{\varphi}).
    \end{align*}
    Rearranging terms and applying \Cref{Th:IoneIneq} now yields 
    \[(\dim\pi)\,\ione{\psi}\leq \ione{\pi\otimes\psi}\leq (\dim\pi)\, (\dim\psi-\dim\varphi +\ione{\varphi}).\]
    Cancelling $\dim\pi$ and rearranging again yields 
    \[\dim\psi-\ione{\psi}\geq \dim\varphi-\ione{\varphi}.\]
    
    On the other hand, since $\varphi_{F(\psi)}$ is isotropic, \Cref{IoneIsotropyEquivalence} further implies 
    \[\dim\varphi-\ione{\varphi}\geq \dim\psi-\ione{\psi}.\]
    We thus have an equality $\dim\varphi-\ione{\varphi}= \dim\psi-\ione{\psi}$ which shows that $\psi_{F(\varphi)}$ is isotropic by \Cref{IoneIsotropyEquivalence} again.
\end{proof}

\subsection*{Maximal Splitting}

An easy consequence of the separation theorem \ref{thm:HoffmannSeparation} is that if $\dim\varphi=2^n+m$ for some $0<m\leq 2^n$, we have $\ione{\varphi}\leq m$, see \cite[Lemma 4.1]{HL06}.
Thus $\varphi$ is said to have \emph{maximal splitting}, if $\ione{\varphi}=m$.
The following proposition shows that maximal splitting of a quadratic form is handed down to its anisotropic Pfister multiples.

\begin{corollary}\label{prop:MaximalSplitting}
    Let $\varphi$ be a form of dimension at least three with maximal splitting.
    Let further $\pi$ be similar to a bilinear Pfister form such that $\pi\otimes\varphi$ is anisotropic over $F$. 
    Then $\pi\otimes\varphi$ has maximal splitting; so in particular, we have $\ione{\pi\otimes\varphi} = (\dim\pi)\,\ione{\varphi}$.
\end{corollary}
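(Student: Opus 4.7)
\medskip

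\noindent\emph{Proof plan.} The idea is to sandwich $\ione{\pi\otimes\varphi}$ between the lower bound coming from \Cref{Th:IoneIneq} and the universal upper bound that characterises maximal splitting.

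Write $\dim\pi = 2^k$ and $\dim\varphi = 2^n + m$ with $0 < m \leq 2^n$, so that maximal splitting of $\varphi$ means $\ione{\varphi} = m$. Then
\[
\dim(\pi\otimes\varphi) \;=\; 2^k(2^n+m) \;=\; 2^{n+k} + 2^k m,
\]
and since $0 < 2^k m \leq 2^{n+k}$, this is precisely the unique decomposition of $\dim(\pi\otimes\varphi)$ in the form $2^N + m'$ with $0 < m' \leq 2^N$, namely $N = n+k$ and $m' = 2^k m$.

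Now the consequence of \Cref{thm:HoffmannSeparation} recalled at the start of this subsection (see \cite[Lemma~4.1]{HL06}) applied to the anisotropic form $\pi\otimes\varphi$ yields
\[
\ione{\pi\otimes\varphi} \;\leq\; 2^k m \;=\; (\dim\pi)\,\ione{\varphi}.
\]
On the other hand, \Cref{Th:IoneIneq} gives the reverse inequality
\[
\ione{\pi\otimes\varphi} \;\geq\; (\dim\pi)\,\ione{\varphi} \;=\; 2^k m.
\]
Combining these two inequalities yields $\ione{\pi\otimes\varphi} = 2^k m = m'$, so $\pi\otimes\varphi$ has maximal splitting, and in particular $\ione{\pi\otimes\varphi} = (\dim\pi)\,\ione{\varphi}$.

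There is essentially no obstacle here: the only subtlety is checking that the dimension $2^{n+k} + 2^k m$ is already in the normalised form required by the definition of maximal splitting, which follows immediately from $0 < m \leq 2^n$. The hypothesis $\dim\varphi \geq 3$ only ensures that we are not in the trivial case $\dim\varphi \leq 2$ where the decomposition $\dim\varphi = 2^n + m$ with $n \geq 1$ is unavailable.
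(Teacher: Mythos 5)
Your proof is correct and follows essentially the same route as the paper's: both invoke \Cref{Th:IoneIneq} for the lower bound $\ione{\pi\otimes\varphi}\geq(\dim\pi)\,\ione{\varphi}$ and then observe that this meets the universal upper bound $\ione{\pi\otimes\varphi}\leq 2^k m$ supplied by Hoffmann's separation theorem, so the lower bound is attained. The paper's write-up is terser (it leaves the upper bound and the arithmetic $\dim(\pi\otimes\varphi)=2^{n+k}+2^k m$ implicit), but the argument is the same; your version makes explicit what the paper takes for granted.
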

\begin{proof}
    We write $\dim\varphi=2^n+k$ with $1\leq k\leq2^n$.
    Since $\varphi$ has maximal splitting, we have $\ione{\varphi}=k$.
    By \Cref{Th:IoneIneq}, we have
    \[\ione{\pi\otimes\varphi}\geq k\dim\pi\]
    and so, $\pi\otimes\varphi$ has maximal splitting.
\end{proof}

\new{The maximal splitting of $\varphi$ is also inherited by multiples $\psi\otimes\varphi$ where $\psi$ is not a bilinear Pfister form itself, but a subform of one of a sufficiently large dimension.}

\begin{corollary}\label{prop:MaxSplittingPfisterNeighbor}
    Let $\varphi$ be a form of dimension at least two and $\pi$ be similar to a bilinear Pfister form such that $\pi\otimes\varphi$ is anisotropic over F. 
    Let $\psi$ be a subform of $\pi$ such that $\dim\psi > \dim \pi - \frac{(\dim \pi)\,\ione{\varphi}}{\dim\varphi}$. 
    If $\varphi$ has maximal splitting, then $\psi\otimes\varphi$ also has maximal splitting.
    In particular, we have
    \[\ione{\psi \otimes \varphi} = \dim\pi\,\ione{\varphi}  - \dim \varphi\,(\dim \pi - \dim \psi).\]
\end{corollary}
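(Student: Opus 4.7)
The plan is to reduce the statement to the equality case of \Cref{IoneIsotropyEquivalence} applied to the pair of anisotropic forms $\pi\otimes\varphi$ and $\psi\otimes\varphi$, and then to compute and verify the maximal splitting dimension.

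First I would recall from \Cref{prop:MaximalSplitting} that under the hypotheses, $\pi\otimes\varphi$ has maximal splitting with $\ione{\pi\otimes\varphi} = (\dim\pi)\,\ione{\varphi}$. Writing $\dim\varphi = 2^n + k$ with $1\leq k\leq 2^n$ and $\ione{\varphi}=k$, and $\dim\pi = 2^m$, this gives $\dim(\pi\otimes\varphi) = 2^{n+m} + 2^m k$ and $\ione{\pi\otimes\varphi} = 2^m k$. Since $\psi\subseteq\pi$, we have $\psi\otimes\varphi\preccurlyeq \pi\otimes\varphi$, so $\psi\otimes\varphi$ is anisotropic and (being anisotropic of dimension at least $2$) irreducible, so $F(\psi\otimes\varphi)$ is defined.

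The next and key step is to verify that both $(\psi\otimes\varphi)_{F(\pi\otimes\varphi)}$ and $(\pi\otimes\varphi)_{F(\psi\otimes\varphi)}$ are isotropic. The second is immediate from $\psi\otimes\varphi\preccurlyeq \pi\otimes\varphi$ and the isotropy of $(\psi\otimes\varphi)_{F(\psi\otimes\varphi)}$. For the first, I observe that the codimension of $\psi\otimes\varphi$ in $\pi\otimes\varphi$ equals $(\dim\pi-\dim\psi)\dim\varphi$, and the hypothesis $\dim\psi > \dim\pi - \frac{(\dim\pi)\,\ione{\varphi}}{\dim\varphi}$ rearranges precisely to
\[(\dim\pi-\dim\psi)\dim\varphi < (\dim\pi)\,\ione{\varphi},\]
which is exactly the hypothesis of \Cref{cor:BigSubformsOfProductsBecomeIsotropic}, yielding the desired isotropy.

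With both isotropies in hand, the equality clause of \Cref{IoneIsotropyEquivalence} gives
\[\dim(\psi\otimes\varphi)-\ione{\psi\otimes\varphi}=\dim(\pi\otimes\varphi)-\ione{\pi\otimes\varphi}.\]
Solving for $\ione{\psi\otimes\varphi}$ and substituting $\ione{\pi\otimes\varphi}=(\dim\pi)\,\ione{\varphi}$ gives the displayed formula. Finally, to see that this amounts to maximal splitting for $\psi\otimes\varphi$, I would check that the hypothesis $\dim\psi\leq \dim\pi = 2^m$ combined with the strict inequality on $\dim\psi$ yields
\[2^{n+m} < \dim(\psi\otimes\varphi) \leq 2^{n+m+1},\]
so that writing $\dim(\psi\otimes\varphi) = 2^{n+m} + L$ with $0 < L \leq 2^{n+m}$ and using the computed value
\[\ione{\psi\otimes\varphi} = \dim(\psi\otimes\varphi) - 2^{n+m} = L\]
identifies $L$ as the maximal splitting bound.

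The only genuinely delicate point is the dimension computation for the codimension, which exactly matches the hypothesis; everything else is bookkeeping and assembly of results already proved in the section. The rest is routine verification.
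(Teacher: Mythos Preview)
Your proposal is correct and follows essentially the same approach as the paper: both use \Cref{cor:BigSubformsOfProductsBecomeIsotropic} (via the codimension inequality) together with the obvious reverse isotropy to invoke the equality case of \Cref{IoneIsotropyEquivalence}, then combine with \Cref{prop:MaximalSplitting}. Your verification that $2^{n+m} < \dim(\psi\otimes\varphi) \leq 2^{n+m+1}$ is a bit more explicit than the paper's, which simply infers maximal splitting of $\psi\otimes\varphi$ from the equality $\dim(\psi\otimes\varphi)-\ione{\psi\otimes\varphi}=\dim(\pi\otimes\varphi)-\ione{\pi\otimes\varphi}$ and the maximal splitting of $\pi\otimes\varphi$.
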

\begin{proof}
    By the condition on $\dim\psi$, we clearly have 
    \[\dim(\psi\otimes\varphi)>\dim(\pi\otimes\varphi)-(\dim\pi)\,\ione{\varphi}.\]
    Thus, \Cref{cor:BigSubformsOfProductsBecomeIsotropic} implies that $\psi\otimes\varphi$ is isotropic over $F(\pi\otimes\varphi)$.
    Clearly, the form $(\pi\otimes\varphi)_{F(\psi\otimes\varphi)}$ is isotropic as well and so, we have
    \[\dim(\psi\otimes\varphi)-\ione{\psi\otimes\varphi}=\dim(\pi\otimes\varphi)-\ione{\pi\otimes\varphi}\]
    by \Cref{IoneIsotropyEquivalence}.
    Since $\pi\otimes\varphi$ has maximal splitting by \Cref{prop:MaximalSplitting} so has $\psi\otimes\varphi$ and from $\ione{\pi\otimes\varphi}=(\dim\pi)\,\ione{\varphi}$, we obtain
    \[\begin{aligned}    \ione{\psi\otimes\varphi} &=\dim\pi\,\ione{\varphi}+\dim(\psi\otimes\varphi) -\dim(\pi\otimes\varphi) \\
    &=\dim\pi\,\ione{\varphi}  - \dim \varphi\,(\dim \pi - \dim \psi). 
    \end{aligned} \qedhere\]
\end{proof}

\section{Isotropy Indices over Function Fields of Quadrics}\label{sec:IndexOverFunctionFields}

\new{
The goal of this section is to show that
\begin{equation}\label{Eq:MainGoal}
    \istar{(\pi\otimes\varphi)_{F(\pi\otimes\psi)}} \geq(\dim\pi)\,\istar{\varphi_{F(\psi)}}
\end{equation}
for $\istarempty\in\{\iwempty,\iqlempty\}$. Note that if we could prove it without any restrictions on $\varphi$ and $\psi$, then \eqref{Eq:MainGoal} would also hold for $\istarempty=\itiempty$. Unfortunately, to show the inequality for the Witt index, we will need to assume that $\varphi$ is nonsingular and $\psi$ is not totally singular (see \Cref{Cor:DefectsWittProducts}). This restriction essentially arises from  parts \ref{Prop:IsotropySubforms2} and \ref{Prop:IsotropySubforms3} of \Cref{Prop:IsotropySubforms}, or rather, of the lack of their generalization to singular forms. However, we do not need to impose any restrictions for the results about the defect. That is because for any quadratic form $\varphi$, we have $\iql{\varphi}=\iql{\ql{\varphi}}$.
}

\subsection*{Estimates via a two-dimensional form} 

Consider two quadratic forms $\varphi$, $\psi$, and $\istarempty\in\{\iwempty,\iqlempty,\itiempty\}$. Computing $\istar{\varphi_{F(\psi)}}$ is difficult in general; however, recall that by \Cref{Prop:IsotropySubforms}, a lot is known if the extension $F(\psi)/F$ is quadratic. Therefore, an essential step in our approach is an estimation of $\istar{\varphi_{F(\psi)}}$ by $\istar{\varphi_{F(\mu)}}$ with a two-dimensional form $\mu\preccurlyeq\psi$. \new{In the cases of characteristic other than 2 and nonsingular quadratic forms in characteristic 2, this follows from \cite[Prop.~22.18]{ElmanKarpenkoMerkurjev2008}. In characteristic 2, we were not able to reach the full generality; we are missing the case of the Witt index when $\psi$ is totally singular. However, it does not affect our results, because (as discussed above) the main missing piece comes from \Cref{Prop:IsotropySubforms}.}

\begin{lemma}\label{Lemma:TransDefect}
Let $\varphi$, $\psi$ and $\mu$ be quadratic forms over $F$ with $\mu\preccurlyeq\psi$ and $\dim\mu\geq2$. Then $\iql{\varphi_{F(\mu)}}\geq\iql{\varphi_{F(\psi)}}$. 
\end{lemma}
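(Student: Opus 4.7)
Plan: Since $\iql{\varphi_K}=\iql{\ql{\varphi}_K}$ for any field extension $K/F$, we may replace $\varphi$ by its quasilinear part and assume $\varphi$ is totally singular. This collapses the problem to a question about the behaviour of the $K^2$-span of the diagonal entries of $\varphi$, and removes the need to track any nonsingular component.

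Next, if $\psi$ is not totally singular, then $F(\psi)/F$ is separable, and by \cite[Lemma~2.8]{Lag06} anisotropic totally singular forms remain anisotropic over separable extensions. Hence $\iql{\varphi_{F(\psi)}}=\iql{\varphi}$, and the inequality $\iql{\varphi_{F(\mu)}}\geq\iql{\varphi}$ is immediate since any field extension only increases the defect of a totally singular form. This settles the case.

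In the remaining case, $\psi$ is totally singular. Since a subform of a totally singular form is totally singular, $\mu$ is also totally singular, and (combining this with the fact that dominance and subform agree under total singularity) we may write $\psi\cong\mu\perp\rho$ for some totally singular $\rho\cong\langle a_{k+1},\ldots,a_n\rangle$. The plan is to decompose $\mu\preccurlyeq\psi$ into a chain of dimension-one extensions $\mu=\psi_{k}\preccurlyeq\psi_{k+1}\preccurlyeq\cdots\preccurlyeq\psi_{n}=\psi$ (where $\psi_{j}\cong\mu\perp\langle a_{k+1},\ldots,a_{j}\rangle$) and to construct, at each step, an $F$-place $F(\psi_{j+1})\to F(\psi_{j})\cup\{\infty\}$ by specialising the freshly-added coordinate to zero. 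Composing these places yields an $F$-place $\lambda:F(\psi)\to F(\mu)\cup\{\infty\}$, possibly factoring through a purely transcendental extension of $F(\mu)$ (which leaves the defect invariant by \Cref{Lemma:IndicesResidueForms}). The lemma then follows from the standard specialization principle: for a form defined over $F$, the defect cannot decrease under an $F$-place, i.e., $\iql{\varphi_{F(\mu)}}\geq\iql{\varphi_{F(\psi)}}$.

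The main obstacle is the concrete construction of these places in characteristic~2. Totally singular quadrics are cones: the Jacobian of $\sum a_{i}X_{i}^{2}$ vanishes identically, so every point is singular and the usual construction of a discrete valuation at a smooth codimension-one subvariety fails. One must therefore either pass to normalizations of the non-regular local rings arising along the chain, or route the specializations through an auxiliary purely transcendental extension of $F(\mu)$, checking at each stage that the induced map on fraction fields really is a place over $F$ and that the specialisation of $\varphi_{F(\psi_{j+1})}$ coincides with $\varphi_{F(\psi_{j})}$ so that the specialisation inequality for the defect can be applied.
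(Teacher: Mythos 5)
Your initial reductions are exactly the paper's: replace $\varphi$ by $\ql{\varphi}$ since the defect sees only the quasilinear part, dispose of the case where $\psi$ is not totally singular by separability of $F(\psi)/F$ (the defect of a totally singular form cannot drop over any extension, so $\iql{\varphi_{F(\psi)}}=\iql{\varphi}\leq\iql{\varphi_{F(\mu)}}$), and observe that otherwise $\mu$ is a genuine subform and all three forms are totally singular. Up to there you and the paper agree.

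Where the routes diverge is the residual case with all forms totally singular. You propose to build a chain of $F$-places
\[
F(\psi)=F(\psi_n)\to F(\psi_{n-1})\cup\{\infty\}\to\cdots\to F(\psi_k)\cup\{\infty\}=F(\mu)\cup\{\infty\}
\]
by killing one coordinate at a time and then invoking the specialization principle for the defect. This is a defensible plan, but you do not actually carry it out; you explicitly flag the obstruction (the quadric of a totally singular form is nowhere smooth, so the standard discrete-valuation construction along a linear section fails) and only gesture at two possible repairs (normalizations, or routing through a purely transcendental extension). Neither is executed, and the place-extension step from $F(X_3,\ldots,X_n)$ (over which $F(\psi)$ is a degree-$2$ purely inseparable extension) to $F(\mu)$ still has to be verified compatibly with the inseparable generators. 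The paper instead outsources precisely this to Scully's machinery: it cites \cite[Lemma~3.4]{Scu16-Split}, applied via \cite[Lemma~4.5]{Scu16-Split}, which furnishes the required specialization statement for totally singular subforms without a bespoke place construction. So your proposal has the right skeleton and correctly identifies the genuine difficulty, but the crucial last step is announced rather than proved; as written there is a gap where the paper appeals to Scully.
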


\begin{proof}
We may assume that $\varphi$ is anisotropic. Moreover, recall that for any field extension $E/F$, we have ${\iql{\varphi_E}=\iql{\ql{\varphi}_E}}$; therefore, without loss of generality, let $\varphi$ be totally singular.

If $\psi$ is not totally singular, then $F(\psi)/F$ is separable and thus, $\varphi$ is anisotropic over $F(\psi)$, i.e., $\iql{\varphi_{F(\psi)}}=0$, and the claim is trivial. 

If $\mu$ is not totally singular, then it follows that $\psi$ is not totally singular, and we are in the previous case.

Now, assume that $\varphi$, $\psi$ and $\mu$ are all totally singular, so in particular $\mu\subseteq\psi$.
The assertion then follows from \cite[Lemma~3.4]{Scu16-Split} which can be applied because of \cite[Lemma~4.5]{Scu16-Split}.
\end{proof}

\begin{lemma} \label{Lemma:TransDefect_phi=psi}
Let $\varphi$ and $\mu$ be quadratic forms over $F$ such that $\mu\preccurlyeq\varphi$ and $\dim\mu\geq2$. Then we have inequalities 
\begin{align*}
\iw{\varphi_{F(\mu)}}&\geq\iw{\varphi_{F(\varphi)}},\\
\iql{\varphi_{F(\mu)}}&\geq\iql{\varphi_{F(\varphi)}},\\
\iti{\varphi_{F(\mu)}}&\geq\iti{\varphi_{F(\varphi)}}.
\end{align*}  
\end{lemma}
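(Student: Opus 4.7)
The plan is to reduce to anisotropic $\varphi$ and then handle each inequality using the tools from Section~2: \Cref{Lemma:TransDefect} for the defect, \Cref{cor:FirstWittIndexMinimal} for the total isotropy index, and the kernel-form characterisation recalled after \Cref{Lemma:FunFieldPurTransc} for the Witt index.

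For the reduction, if $\iw{\varphi}>0$ then \Cref{Lemma:FunFieldPurTransc2} makes $F(\varphi)/F$ purely transcendental, so each of $\iw,\iql$ and $\iti$ of $\varphi$ over $F(\varphi)$ coincides with its value over $F$; any extension, in particular $F(\mu)$, can only enlarge these values, so all three inequalities hold trivially. A similar argument using \Cref{Lemma:FunFieldPurTransc1} and splitting off the defect part reduces the case $\iql\varphi>0$ to the anisotropic setting.

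Now suppose $\varphi$ is anisotropic. The defect inequality is an immediate instance of \Cref{Lemma:TransDefect} with $\psi=\varphi$. Since $\mu\preccurlyeq\varphi$, any nontrivial zero of $\mu$ over $F(\mu)$ extends by $0$ to a zero of $\varphi$, so $\varphi_{F(\mu)}$ is isotropic. If $\varphi$ is totally singular the Witt-index inequality is trivial ($0\geq 0$) and the total isotropy reduces to the defect case. Otherwise, once we know $\iw{\varphi_{F(\mu)}}>0$, \Cref{cor:FirstWittIndexMinimal} yields $\iti{\varphi_{F(\mu)}}\geq\ione\varphi=\iti{\varphi_{F(\varphi)}}$; and the kernel-form characterisation forces the nonsingular dimension $r'$ of $(\varphi_{F(\mu)})_{\an}$ to be some $r_j$ with $r'<r=r_0$, hence $r'\leq r_1$, giving $\iw{\varphi_{F(\mu)}}=(r-r')/2\geq(r-r_1)/2=\ione\varphi=\iw{\varphi_{F(\varphi)}}$.

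The main obstacle is securing $\iw{\varphi_{F(\mu)}}>0$ when $\varphi$ is not totally singular. If $\mu$ is itself not totally singular then $F(\mu)/F$ is separable, $\mu_{F(\mu)}$ acquires a hyperbolic plane, and the dominance lifts this plane into $\varphi_{F(\mu)}$, so $\iw{\varphi_{F(\mu)}}\geq 1$ automatically. The delicate sub-case is $\mu$ totally singular while $\varphi$ is not, where the isotropy of $\varphi$ coming straight from $\mu$ lives entirely in the quasilinear part and is purely defect; there I expect one must either replace $\mu$ by a larger two-dimensional dominated subform of $\varphi$ that is not totally singular, or exploit the interaction between the nonsingular part of $\varphi$ and the defect produced over $F(\mu)$ to extract a genuine Witt-isotropic pair.
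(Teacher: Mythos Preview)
Your approach coincides with the paper's: its entire proof is that the defect inequality is \Cref{Lemma:TransDefect} with $\psi=\varphi$, and that the Witt-index inequality ``follows from \Cref{cor:FirstWittIndexMinimal}'' (with the total-index inequality left implicit). Your unpacking via the kernel-form characterisation is precisely the mechanism behind that corollary, so on the parts that go through you are doing exactly what the paper does, only more carefully.

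The obstacle you isolate in your final paragraph, however, is genuine and in fact fatal to the Witt-index inequality as stated. Take $F=\F_2(a,b,c)$, $\varphi=[1,a]\perp\sqf{b,c}$ (anisotropic of type $(1,2)$) and $\mu=\sqf{b,c}\preccurlyeq\varphi$. Then $F(\mu)\cong\F_2(a,c,t)$ with $t^2=b/c$, a purely transcendental extension of $\F_2(a)$, so $[1,a]$ remains anisotropic there while $\sqf{b,c}$ acquires defect $1$; thus $\iw{\varphi_{F(\mu)}}=0$. On the other hand $F(\varphi)/F$ is separable, so $\ql{\varphi}=\sqf{b,c}$ stays anisotropic over $F(\varphi)$, giving $\iql{\varphi_{F(\varphi)}}=0$; since $\varphi_{F(\varphi)}$ is isotropic and nondefective this forces $\iw{\varphi_{F(\varphi)}}\geq1$. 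Hence $\iw{\varphi_{F(\mu)}}=0<1=\iw{\varphi_{F(\varphi)}}$, and neither of your suggested repairs can succeed. The defect and total-index inequalities do survive in this example ($1\geq0$ and $1\geq1$ respectively), but note that your route to the $\itiempty$-inequality via \Cref{cor:FirstWittIndexMinimal} also breaks here, since that corollary's hypothesis $\iw{\varphi_{F(\mu)}}>0$ fails; the paper's one-line proof does not address this case either.
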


\begin{proof}
The inequality of Witt indices follows from \Cref{cor:FirstWittIndexMinimal}, and the claim about the defects is a direct consequence of \Cref{Lemma:TransDefect}. 
\end{proof}

\begin{lemma} \label{Lemma:TransIsotropyIndeces}
    Let $\varphi$, $\psi$ and $\mu$ be quadratic forms over $F$ such that $\mu\preccurlyeq\psi$ and $\dim\mu\geq2$. Moreover, suppose that $\psi$ is not totally singular. Then we have inequalities
    \begin{align*}
        \iw{\varphi_{F(\mu)}}&\geq \iw{\varphi_{F(\psi)}},\\
        \iql{\varphi_{F(\mu)}}&\geq \iql{\varphi_{F(\psi)}},\\
        \iti{\varphi_{F(\mu)}}&\geq \iti{\varphi_{F(\psi)}}.
    \end{align*}
\end{lemma}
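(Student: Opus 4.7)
The plan is to establish the three inequalities in turn. The defect inequality $\iql{\varphi_{F(\mu)}}\geq\iql{\varphi_{F(\psi)}}$ follows immediately from \Cref{Lemma:TransDefect}, which already handles general $\varphi$, $\psi$ and $\mu\preccurlyeq\psi$ with $\dim\mu\geq 2$, without further hypotheses on $\psi$.

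For the Witt and total isotropy index inequalities, I would use a standard compositum argument. Set $L := F(\mu)(\psi_{F(\mu)})$. The key observation is that $L$ is the function field (over $F$) of the product of the quadrics associated to $\mu$ and $\psi$, and this construction is symmetric in the two forms; equivalently, $L = F(\psi)(\mu_{F(\psi)})$. In particular $F(\psi)\subseteq L$, so
\[\iw{\varphi_L}\geq\iw{\varphi_{F(\psi)}}\quad\text{and}\quad\iti{\varphi_L}\geq\iti{\varphi_{F(\psi)}}.\]
It then remains to show $\iw{\varphi_{F(\mu)}}=\iw{\varphi_L}$ and $\iti{\varphi_{F(\mu)}}=\iti{\varphi_L}$, which would follow if $L/F(\mu)$ is purely transcendental. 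By \Cref{Lemma:FunFieldPurTransc}, this reduces to verifying that $\iw{\psi_{F(\mu)}}>0$.

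To check $\iw{\psi_{F(\mu)}}>0$, recall that $\mu\preccurlyeq\psi$ means $\mu\cong\psi|_U$ for some subspace $U$ of the ambient space $V$ of $\psi$, so the restriction of $\bb_\psi$ to $U$ coincides with $\bb_\mu$. Any Witt-type isotropy of $\mu$ over $F(\mu)$---a pair $v,w\in U_{F(\mu)}$ with $\mu(v)=\mu(w)=0$ and $\bb_\mu(v,w)=1$---therefore lifts directly to a hyperbolic pair for $\psi_{F(\mu)}$, giving $\iw{\psi_{F(\mu)}}\geq\iw{\mu_{F(\mu)}}$. When $\mu$ is not totally singular, the standard splitting theory yields $\iw{\mu_{F(\mu)}}=\ione{\mu}\geq 1$, and we are done.

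The main obstacle is the case when $\mu$ is totally singular while $\psi$ is not: then $\iw{\mu_{F(\mu)}}=0$, so the direct transfer produces only defect-type isotropy of $\psi_{F(\mu)}$. Here one must exploit the non-totally-singular structure of $\psi$, expecting that the defect contribution from $\mu$ combines with an anisotropic nonsingular part of $\psi$ to generate Witt-type isotropy over $F(\mu)$. Once $\iw{\psi_{F(\mu)}}>0$ has been established in all cases, the compositum argument together with \Cref{Lemma:IndicesResidueForms} concludes the proof.
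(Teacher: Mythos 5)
Your overall strategy is the same as the paper's: prove the defect inequality by \Cref{Lemma:TransDefect}, and for the Witt and total indices pass to the compositum $F(\mu)(\psi)$, observe $F(\psi)\subseteq F(\mu)(\psi)$, and reduce to showing that $F(\mu)(\psi)/F(\mu)$ is purely transcendental, which by \Cref{Lemma:FunFieldPurTransc} comes down to $\iw{\psi_{F(\mu)}}>0$. Your defect argument is correct and matches the paper exactly.

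The genuine gap is precisely the one you flag at the end: you only establish $\iw{\psi_{F(\mu)}}>0$ via the transfer $\iw{\psi_{F(\mu)}}\geq\iw{\mu_{F(\mu)}}$, which fails when $\mu$ is totally singular (then $\iw{\mu_{F(\mu)}}=0$, so the transfer only yields defect-type isotropy of $\psi_{F(\mu)}$). You correctly diagnose this but leave the resolution to a hope that ``the defect contribution combines with an anisotropic nonsingular part of $\psi$''---which, without an argument, is not a proof. The paper closes this gap cleanly and uniformly (without splitting into cases on $\mu$) by applying \Cref{Lemma:TransDefect_phi=psi} with $\varphi:=\psi$: since $\mu\preccurlyeq\psi$ and $\dim\mu\geq2$, that lemma gives $\iw{\psi_{F(\mu)}}\geq\iw{\psi_{F(\psi)}}=\ione{\psi}$, and $\ione{\psi}>0$ because $\psi$ is not totally singular. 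This is exactly the missing ingredient and replaces your incomplete ``direct transfer'' argument entirely.

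One further point the paper treats explicitly and you do not: before forming the compositum one must check that $\psi_{F(\mu)}$ is irreducible, otherwise $F(\mu)(\psi)$ is not a function field in the usual sense. When $\psi_{F(\mu)}$ is reducible (and $\psi$ is not totally singular) one shows $(\psi_{F(\mu)})_\nd\cong\H$, hence $\psi_\nd\cong c\mu$ by the Subform Theorem, and then $F(\psi)/F(\mu)$ is purely transcendental, giving equalities rather than just inequalities. Your symmetry claim $F(\mu)(\psi_{F(\mu)})=F(\psi)(\mu_{F(\psi)})$ implicitly assumes both quadrics stay irreducible over the other's function field, so this case also needs to be peeled off.
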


\begin{proof}
    First, suppose that $\psi_{F(\mu)}$ is reducible as a polynomial; recall that it happens if and only if $(\psi_{F(\mu)})_\nd\cong\H$ or $\dim((\psi_{F(\mu)})_\nd)\leq1$. The latter case is impossible, because the nondefective part of a form that is not totally singular cannot be totally singular.  For the former case, note that ${(\psi_{F(\mu)})_\nd\cong\H}$ necessarily means that $\ql{\psi}\cong s\times\sqf{0}$ for some $s\geq0$ (because otherwise $\dim\ql{(\psi_{F(\mu)})_\nd}\geq1$). 
    Therefore, $(\psi_\nd)_{F(\mu)}\cong\H$, and so $\psi_\nd\cong c\mu$ for some $c\in F^*$ by the Subform Theorem (see \cite[Th.~22.5]{ElmanKarpenkoMerkurjev2008}). 
    Since the field extension $F(\psi)/F(\psi_\nd)$ is trivial or purely transcendental by \Cref{Lemma:FunFieldPurTransc}, it follows that 
    \[\iw{\varphi_{F(\mu)}}=\iw{\varphi_{F(\psi_\nd)}}=\iw{\varphi_{F(\psi)}},\]
    and analogously for the defect and the total isotropy index. 

    Now suppose that $\psi_{F(\mu)}$ is irreducible; then the field $F(\mu)(\psi)$ is well-defined.
    Since $\iw{\psi_{F(\mu)}}\geq\iw{\psi_{F(\psi)}}$ by \Cref{Lemma:TransDefect_phi=psi}, and $\iw{\psi_{F(\psi)}}=\ione{\psi}>0$ as $\psi$ is not totally singular, we get $\iw{\psi_{F(\mu)}}>0$. Hence, the field $F(\mu)(\psi)$ is a purely transcendental  extension of $F(\mu)$ by \Cref{Lemma:FunFieldPurTransc}. 
    Together with $F(\psi)\subseteq F(\mu)(\psi)$, it follows that we have
    \[\iw{\varphi_{F(\mu)}}=\iw{\varphi_{F(\mu)(\psi)}}\geq \iw{\varphi_{F(\psi)}}.\]
    Since an analogous inequality also holds for the defect by \Cref{Lemma:TransDefect} (even without further assumptions on $\psi$), the assertion about the total isotropy index follows.
\end{proof}

\subsection*{Reduction to anisotropic forms}
We present two technical lemmas that will help us reduce the main results to the case of anisotropic quadratic forms.

\begin{lemma} \label{Lemma:varphiWLOGanisotropic}
Let $\varphi$ be a quadratic form over $F$, and $\pi$ be a bilinear form over $F$. Let $E/F$ and $L_1/E$, $L_2/E$ be field extensions. Assume that $\istarempty\in\{\iwempty, \iqlempty, \itiempty\}$ and $\Square\in\{=,\leq,\geq,<,>\}$. If
\[\istar{(\pi\otimes(\varphi_E)_\an)_{L_1}}\:\Square\:(\dim\pi)\: \istar{((\varphi_E)_\an)_{L_2}},\]
then
\[\istar{(\pi\otimes\varphi)_{L_1}}\:\Square\:(\dim\pi)\: \istar{\varphi_{L_2}}.\]    
\end{lemma}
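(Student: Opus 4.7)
The strategy is to strip off the hyperbolic and quasi-hyperbolic parts of $\varphi_E$ and show that the reduced comparison is exactly the hypothesis. Write $\varphi_E \cong (\varphi_E)_\an \perp i\times\H \perp j\times\sqf{0}$, where $i = \iw{\varphi_E}$ and $j = \iql{\varphi_E}$. Since hyperbolic planes and zero summands remain hyperbolic, respectively zero, over any field extension, for any extension $K/E$ one obtains
\[\iw{\varphi_K} = \iw{((\varphi_E)_\an)_K} + i, \qquad \iql{\varphi_K} = \iql{((\varphi_E)_\an)_K} + j,\]
and hence $\iti{\varphi_K} = \iti{((\varphi_E)_\an)_K} + (i+j)$.

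Next the behaviour of tensoring with $\pi$ must be analysed. Writing $\pi \cong \bif{a_1, \dots, a_n}$ with $a_k \in F^*$, the identities $a_k\H \cong \H$ and $a_k\sqf{0} \cong \sqf{0}$ give $\pi\otimes\H \cong (\dim\pi)\times\H$ and $\pi\otimes\sqf{0} \cong (\dim\pi)\times\sqf{0}$. The displayed decomposition of $\varphi_E$ thus yields
\[\pi\otimes\varphi_E \cong \pi\otimes(\varphi_E)_\an \perp (\dim\pi)i\times\H \perp (\dim\pi)j\times\sqf{0},\]
and the same reasoning as before gives, for any extension $K/E$,
\[\iw{(\pi\otimes\varphi)_K} = \iw{(\pi\otimes(\varphi_E)_\an)_K} + (\dim\pi)\,i,\]
together with the analogous identities for $\iqlempty$ and $\itiempty$ (with constants $(\dim\pi)\,j$ respectively $(\dim\pi)(i+j)$).

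To conclude, let $c \in \{i,\, j,\, i+j\}$ be the constant matching the chosen $\istarempty \in \{\iwempty, \iqlempty, \itiempty\}$. The identities above then yield
\[\istar{(\pi\otimes\varphi)_{L_1}} = \istar{(\pi\otimes(\varphi_E)_\an)_{L_1}} + (\dim\pi)\,c\]
and
\[(\dim\pi)\,\istar{\varphi_{L_2}} = (\dim\pi)\,\istar{((\varphi_E)_\an)_{L_2}} + (\dim\pi)\,c.\]
Since every relation in $\{=,\leq,\geq,<,>\}$ is invariant under adding the same quantity to both sides, cancelling $(\dim\pi)\,c$ turns the hypothesis into the conclusion. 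This is a purely bookkeeping reduction; the only mild subtlety is the diagonal computation for $\pi\otimes\H$ and $\pi\otimes\sqf{0}$, which is immediate once $\pi$ is written in the diagonal form $\bif{a_1,\dots,a_n}$.
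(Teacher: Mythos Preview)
Your argument is correct and follows essentially the same route as the paper: decompose $\varphi_E$ as $(\varphi_E)_\an\perp i\times\H\perp j\times\sqf{0}$, observe that tensoring with $\pi$ turns $i\times\H$ and $j\times\sqf{0}$ into $(\dim\pi)i\times\H$ and $(\dim\pi)j\times\sqf{0}$, and then read off that both sides of the desired relation differ from the corresponding sides of the hypothesis by the same additive constant $(\dim\pi)c$. Your write-up is slightly more explicit than the paper's (you spell out the diagonal computation $a_k\H\cong\H$, $a_k\sqf{0}\cong\sqf{0}$ and phrase the transfer in terms of translation-invariance of the relation $\Square$), but there is no substantive difference.
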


\begin{proof}
Let $\varphi_E\cong i\times\H\perp\varphi'\perp j\times\sqf{0}$ with $\varphi'$ anisotropic. We get
\begin{align*}
(\pi\otimes\varphi)_{L_1}&\cong (i\dim\pi)\times\H\perp(\pi\otimes\varphi')_{L_1}\perp (j\dim\pi)\times\sqf{0}, 
\quad \text{and} \\
\varphi_{L_2}&\cong i\times\H\perp\varphi'_{L_2}\perp j\times\sqf{0}.
\end{align*}
Therefore,
\begin{align*}
\iw{\pi\otimes\varphi}_{L_1}&=i\dim\pi+\iw{(\pi\otimes\varphi')_{L_1}}
\quad \text{and} \\
(\dim\pi)\:\iw{\varphi_{L_2}}&=i\dim\pi+(\dim\pi)\:\iw{\varphi'_{L_2}},
\end{align*}
as well as 
\begin{align*}
\iql{\pi\otimes\varphi}_{L_1}&=j\dim\pi+\iql{(\pi\otimes\varphi')_{L_1}}
\quad \text{and} \\
(\dim\pi)\:\iql{\varphi_{L_2}}&=j\dim\pi+(\dim\pi)\:\iql{\varphi'_{L_2}}.
\end{align*}
Thus, if it holds that
\[\istar{(\pi\otimes\varphi')_{L_1}}\:\Square\:(\dim\pi)\: \istar{\varphi'_{L_2}},\]
then we have
\[\istar{(\pi\otimes\varphi)_{L_1}}\:\Square\:(\dim\pi)\: \istar{\varphi_{L_2}}. \qedhere\] 
\end{proof}

\begin{lemma}\label{Lemma:psiWLOGanisotropic}
    Let $\varphi$ and $\psi$ be quadratic forms over $F$ and $\pi$ be a bilinear form over $F$. Let $L/F$ be a field extension such that $(\pi\otimes\psi)_L$ is irreducible. Assume that $\istarempty\in\{\iwempty, \iqlempty, \itiempty\}$ and $\Square\in\{=,\leq,\geq,<,>\}$. If
    \[\istar{(\pi\otimes\varphi)_{L(\pi\otimes\psi_\nd)}}\:\Square\:(\dim\pi)\:\istar{\varphi_{F(\psi_\nd)}},\]
    then
    \[\istar{(\pi\otimes\varphi)_{L(\pi\otimes\psi)}}\:\Square\:(\dim\pi)\:\istar{\varphi_{F(\psi)}}.\]
    Moreover, if $\iw{\psi}>0$, then
    \[\istar{(\pi\otimes\varphi)_{L(\pi\otimes\psi)}}\geq(\dim\pi)\:\istar{\varphi_{F(\psi)}}\]
    with equality if $\pi\otimes\varphi_{\an}$ is anisotropic.
\end{lemma}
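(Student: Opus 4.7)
The plan is to reduce both claims to statements about purely transcendental field extensions together with the Witt decomposition. For the first implication, the key observation is that since $\psi\cong\psi_\nd\perp\iql{\psi}\times\sqf{0}$, we have
\[\pi\otimes\psi\cong\pi\otimes\psi_\nd\perp(\iql{\psi}\dim\pi)\times\sqf{0},\]
so $(\pi\otimes\psi)_L$ and $(\pi\otimes\psi_\nd)_L$ share the same nondefective part. In particular, the irreducibility of $(\pi\otimes\psi_\nd)_L$ follows from that of $(\pi\otimes\psi)_L$, since reducibility depends only on the nondefective part. Applying \Cref{Lemma:FunFieldPurTransc}~\ref{Lemma:FunFieldPurTransc1} to each of $(\pi\otimes\psi)_L$ and $(\pi\otimes\psi_\nd)_L$ over $L$, both $L(\pi\otimes\psi)$ and $L(\pi\otimes\psi_\nd)$ are purely transcendental extensions of the common subfield $L(((\pi\otimes\psi)_L)_\nd)$. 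Since the Witt index, defect, and total isotropy index are all preserved under purely transcendental extensions, this yields
\[\istar{(\pi\otimes\varphi)_{L(\pi\otimes\psi)}}=\istar{(\pi\otimes\varphi)_{L(\pi\otimes\psi_\nd)}}.\]
A direct application of the same lemma gives the analogous equality $\istar{\varphi_{F(\psi)}}=\istar{\varphi_{F(\psi_\nd)}}$, and combining these two yields the desired implication for each relation $\Square$.

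For the moreover part, assume $\iw{\psi}>0$. Then $F(\psi)/F$ is purely transcendental by \Cref{Lemma:FunFieldPurTransc}~\ref{Lemma:FunFieldPurTransc2}, so $\istar{\varphi_{F(\psi)}}=\istar{\varphi}$. Furthermore, assuming $\pi$ anisotropic (which is the relevant case in the paper), we have $\iw{(\pi\otimes\psi)_L}\geq(\dim\pi)\iw{\psi}>0$; combined with the irreducibility hypothesis, the same lemma part yields that $L(\pi\otimes\psi)/L$ is purely transcendental, whence $\istar{(\pi\otimes\varphi)_{L(\pi\otimes\psi)}}=\istar{(\pi\otimes\varphi)_L}$. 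It therefore suffices to show $\istar{(\pi\otimes\varphi)_L}\geq(\dim\pi)\istar{\varphi}$, with equality whenever $(\pi\otimes\varphi_\an)_L$ is anisotropic.

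Using the Witt decomposition $\varphi\cong\iw{\varphi}\times\H\perp\varphi_\an\perp\iql{\varphi}\times\sqf{0}$ together with the identity $\pi\otimes\H\cong(\dim\pi)\times\H$ (valid since $\pi$ is anisotropic), we obtain
\[(\pi\otimes\varphi)_L\cong(\iw{\varphi}\dim\pi)\times\H\perp(\pi\otimes\varphi_\an)_L\perp(\iql{\varphi}\dim\pi)\times\sqf{0},\]
which immediately gives $\iw{(\pi\otimes\varphi)_L}\geq(\dim\pi)\iw{\varphi}$ and $\iql{(\pi\otimes\varphi)_L}\geq(\dim\pi)\iql{\varphi}$. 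Thus the inequality holds for all three choices of $\istarempty$; if $(\pi\otimes\varphi_\an)_L$ is anisotropic, both inequalities become equalities, giving $\istar{(\pi\otimes\varphi)_L}=(\dim\pi)\istar{\varphi}$. I expect the most delicate step to be the verification $((\pi\otimes\psi)_L)_\nd=((\pi\otimes\psi_\nd)_L)_\nd$, which hinges on carefully tracking how $\sqf{0}$-summands behave under tensoring with $\pi$; once this identification is in place, the remainder of the proof reduces to standard properties of isotropy indices under purely transcendental extensions.
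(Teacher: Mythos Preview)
Your argument is correct and follows essentially the same route as the paper: both parts reduce to the invariance of $\iwempty,\iqlempty,\itiempty$ under purely transcendental extensions via \Cref{Lemma:FunFieldPurTransc}. Your treatment of the first implication is in fact more careful than the paper's, which simply asserts that $L(\pi\otimes\psi)/L(\pi\otimes\psi_\nd)$ is purely transcendental; you correctly route this through the common intermediate field $L\bigl(((\pi\otimes\psi)_L)_\nd\bigr)=L\bigl(((\pi\otimes\psi_\nd)_L)_\nd\bigr)$, which is exactly what \Cref{Lemma:FunFieldPurTransc}\ref{Lemma:FunFieldPurTransc1} actually provides. For the ``moreover'' part the paper invokes \Cref{Prop:DivideFactor} for the equality case, whereas you unpack the Witt decomposition directly; these are equivalent here since the hypothesis that $\pi\otimes\varphi_\an$ is anisotropic makes the conclusion of \Cref{Prop:DivideFactor} immediate. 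Your added caveat that one needs $\pi$ anisotropic (or at least nondegenerate) for $\pi\otimes\H\cong(\dim\pi)\times\H$ is a fair observation, and your use of anisotropy of $(\pi\otimes\varphi_\an)_L$ rather than over $F$ is what the computation genuinely requires.
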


\begin{proof}
The field extensions $F(\pi\otimes\psi)/F(\pi\otimes\psi_\nd)$ and $F(\psi)/F(\psi_\nd)$ are purely transcendental by \Cref{Lemma:FunFieldPurTransc}. 
Thus, we have 
\[ \istar{(\pi\otimes\varphi)_{F(\pi\otimes\psi)}}=\istar{(\pi\otimes\varphi)_{F(\pi\otimes\psi_\nd)}}
\quad \text{and} \quad
\istar{\varphi_{F(\psi)}}=\istar{\varphi_{F(\psi_\nd)}},
\]
and the first claim follows.

If $\iw{\psi}>0$, then the field extensions $F(\pi\otimes\psi)/F$ and $F(\psi)/F$ are purely transcendental (again by \Cref{Lemma:FunFieldPurTransc}), and hence
\[ \istar{(\pi\otimes\varphi)_{F(\pi\otimes\psi)}}=\istar{\pi\otimes\varphi}
\quad \text{and} \quad
\istar{\varphi_{F(\psi)}}=\istar{\varphi}.
\]
Note that $\istar{\pi\otimes\varphi}\geq(\dim\pi)\:\istar{\varphi}$. Moreover, by \Cref{Prop:DivideFactor}, if $\pi\otimes\varphi_\an$ is anisotropic, then even $\istar{\pi\otimes\varphi}=(\dim\pi)\:\istar{\varphi}$. Thus, the claim follows.
\end{proof}

\subsection*{Main Result}

The \new{key} step is choosing a two-dimensional form $\mu\preccurlyeq\psi$ in such way that we can compute the value of $\istar{\pi\otimes\mu}$. 

\begin{lemma} \label{Lemma:PiBetaIsotropic}
Let $\psi$ be an anisotropic quadratic form over $F$ and $\pi$ be an anisotropic 1-fold bilinear Pfister form over $F$. Assume that $\pi\otimes\psi$ is isotropic. Then there exists an anisotropic binary quadratic form $\mu$ over $F$ such that $\mu\preccurlyeq\psi$ and
\begin{enumerate}
    \item $\mu$ is nonsingular and $\iw{\pi\otimes\mu}=2$, or
    \item $\mu$ is totally singular and $\iql{\pi\otimes\mu}=2$.
\end{enumerate}
In particular, if $\iql{\pi\otimes\psi}>0$, then we can find $\mu$ satisfying part (ii).
\end{lemma}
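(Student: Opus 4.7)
My plan is to construct the desired $\mu$ directly from an isotropic vector of $\pi\otimes\psi$. Writing $\pi\cong\bif{1,d}$ with $d\notin F^2$ (equivalent to the anisotropy of $\pi$), we have $\pi\otimes\psi\cong\psi\perp d\psi$ on $V\oplus V$, where $V$ is the underlying space of $\psi$. An isotropic vector therefore corresponds to a pair $(v,w)\in V\oplus V$ with $\psi(v)=d\psi(w)$ and $(v,w)\neq(0,0)$. I would first observe that anisotropy of $\psi$ forces both $v$ and $w$ to be nonzero, and that anisotropy of $\pi$ forces $v,w$ to be linearly independent: if $v=\lambda w$, then $(\lambda^2+d)\psi(w)=0$ would yield $d=\lambda^2\in F^2$. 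Setting $\mu:=\psi|_{\spn(v,w)}$ then produces a binary anisotropic subform of $\psi$.

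Next I would split into cases according to whether $\bb_\psi(v,w)$ vanishes. If $\bb_\psi(v,w)\neq 0$, then $\mu$ is nonsingular; after replacing $w$ by $\bb_\psi(v,w)^{-1}w$ we obtain $\mu\cong[dc,c']$ with $c=\psi(w)$ and a suitable $c'\in F^*$. Using the identity $[a,b]\cong a[1,ab]$, this becomes $\mu\cong dc\cdot[1,dcc']$, so
\[\pi\otimes\mu\cong dc\,\bigl(\pi\otimes[1,dcc']\bigr)\]
is a scalar multiple of a $2$-fold quadratic Pfister form. The vector $(v,w)$ lies in $\spn(v,w)\oplus\spn(v,w)$ and restricts to an isotropic vector of $\pi\otimes\mu$, so this Pfister form is isotropic and therefore hyperbolic, giving $\iw{\pi\otimes\mu}=2$.

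If instead $\bb_\psi(v,w)=0$, then $\mu$ is totally singular and isometric to $\sqf{dc,c}$ with $c=\psi(w)$. A direct computation yields $\pi\otimes\mu\cong\sqf{c,dc,d^2c,dc}$; applying the isometries $\sqf{d^2c}\cong\sqf{c}$ and $\sqf{a,a}\cong\sqf{a}\perp\sqf{0}$ (both valid in characteristic $2$) simplifies this to $\sqf{c,dc}\perp2\sqf{0}$. Since $d\notin F^2$, the binary totally singular form $\sqf{c,dc}$ is anisotropic, so $\iql{\pi\otimes\mu}=2$.

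For the final clause, if $\iql{\pi\otimes\psi}>0$ then the quasilinear part $\ql{\pi\otimes\psi}\cong\pi\otimes\ql{\psi}$ is isotropic, so I would choose the pair $(v,w)$ with both vectors lying in the radical of $\bb_\psi$; this forces $\bb_\psi(v,w)=0$ and automatically puts us in the totally singular case, producing a $\mu$ of type~(ii). The main subtlety I anticipate is the rescaling step in the nonsingular case: one must be careful that the change of basis preserves the isotropic vector $(v,w)$ so that the reduction to a quadratic Pfister form yields isotropy; this is straightforward but requires careful bookkeeping. Using the criterion that quadratic Pfister forms are hyperbolic if and only if isotropic is what makes the nonsingular case clean, bypassing any direct Witt index computation or Arf invariant argument.
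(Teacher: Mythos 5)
Your proof is correct and follows the same overall structure as the paper's: write $\pi\cong\bif{1,d}$, extract a pair $(v,w)$ with $\psi(v)=d\psi(w)$, set $\mu\cong\psi|_{\spn(v,w)}$, and split on whether $\bb_\psi(v,w)$ vanishes. Two points of contrast are worth noting. First, you establish linear independence of $v,w$ unconditionally from the anisotropy of $\pi$ (via $v=\lambda w\Rightarrow d=\lambda^2$), whereas the paper argues it only in the nonsingular case using $\bb_\psi(u,u)=0$ and leaves it implicit in the totally singular case; your observation is cleaner and more uniform. Second, in the nonsingular case the paper verifies $\iw{\pi\otimes\mu}=2$ by an explicit isometry chain ending in $[x,y]\perp[y,x]\cong 2\times\H$, whereas you recognize $\pi\otimes\mu$ as similar to a $2$-fold quadratic Pfister form and invoke the isotropic-implies-hyperbolic property, carrying the isotropic vector $(v,w)$ along to establish isotropy. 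Both are valid; yours trades a short hands-on computation for a citation of a standard structural fact about Pfister forms. The totally singular case and the final clause (choosing $v,w$ in the radical of $\bb_\psi$, equivalent to the paper's device of replacing $\psi$ by $\ql{\psi}$) are handled in substance the same way.
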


\begin{proof}
Let $V$ be the $F$-vector space of $\psi$ and $\pi\cong\bif{1,a}$; as $\pi$ is anisotropic, we have $a\notin \squares{F}$. 
Since $\pi\otimes\psi\cong\psi\perp a\psi$ is isotropic, it follows that there exist nonzero vectors $u,v\in V$ such that $\psi(u)+a\psi(v)=0$. We need to distinguish between two cases:

First, assume that $\bb_\psi(u,v)\neq0$; then $u,v$ are linearly independent (because $\bb_\psi(u,u)=0$). Let $c=\bb_\psi(u,v)$. Set $x=\psi(u)$ and $y=\psi(c^{-1}v)$. As $\psi$ is anisotropic, we have $x,y\neq0$. Let $\mu=[x,y]$; then $\mu\preccurlyeq\psi$ (because $\mu\cong\psi|_{\spn\{u,v\}}$). Moreover, it follows from the equality $\psi(u)=a\psi(v)$ that $x=ac^2y$. Thus, we have
\[\pi\otimes\mu\cong[x,y]\perp a[x,y]\cong[x,y]\perp[a^{-1}c^{-2}x,ac^2y]\cong[x,y]\perp[y,x]\cong2\times\H,\]
and hence $\iw{\pi\otimes\mu}=2$.

Second, suppose $\bb_\psi(u,v)=0$. In this case, set $x=\psi(u)$ and $y=\psi(v)$; then $x=ay$, and $x$ and $y$ are nonzero. Let $\mu\cong\sqf{x,y}$; as $xy=ay^2\notin \squares{F}$, we have that $\mu$ is anisotropic. 
It follows that $\mu\preccurlyeq\psi$ and 
\[\pi\otimes\mu\cong\bif{1,a}\otimes\sqf{x,y}\cong\sqf{x,ax,y,ay}\cong\sqf{x,y}\perp2\times\sqf{0};\]
therefore, $\iql{\pi\otimes\mu}=2$.

If $\iql{\pi\otimes\psi}>0$, we may exchange $\psi$ for $\ql{\psi}$. Then $\mu$ must be totally singular, i.e., we are in the second case.
\end{proof}

\new{Using $\mu$ from \Cref{Lemma:PiBetaIsotropic}, we can compare $\istar{(\pi\otimes\varphi)_E}$ with $\istar{\varphi_{E(\mu)}}$ (for an appropriate choice of the field $E$), and consequently also with $\istar{\varphi_{F(\psi)}}$. \Cref{Lemma:DefectsProducts} covers the case of $\istarempty=\iqlempty$, and \Cref{Lemma:WittProducts} deals with $\istarempty=\iwempty$.}

\begin{lemma} \label{Lemma:DefectsProducts} 
Let $\varphi$ and $\psi$ be quadratic forms over $F$ of dimension at least two, and let $\pi$ be an 1-fold bilinear Pfister form over $F$. Let $E/F$ be a field extension such that $\pi_E$ and $\psi_E$ are anisotropic but $\iql{(\pi\otimes\psi)_E}>0$. 
\begin{enumerate}
    \item\label{Lemma:DefectsProducts1} For some $\mu\preccurlyeq\psi_E$ with $\dim\mu=2$, we have that
    \[\iql{(\pi\otimes\varphi)_E}\geq2\:\iql{\varphi_{E(\mu)}}.\]
    \item\label{Lemma:DefectsProducts2} We have that
    \[\iql{(\pi\otimes\varphi)_E}\geq2\:\iql{\varphi_{F(\psi)}}.\]
\end{enumerate}
\end{lemma}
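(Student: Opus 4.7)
By \Cref{Lemma:varphiWLOGanisotropic}, in both parts we may reduce to the case where $\varphi_E$ is anisotropic (replacing $\varphi$ by $(\varphi_E)_\an$); then $\ql{\varphi_E}$ is also anisotropic. Write $\pi\cong\bif{1,a}$. Since $\pi_E$ and $\psi_E$ are anisotropic and $\iql{(\pi\otimes\psi)_E}>0$, the concluding statement of \Cref{Lemma:PiBetaIsotropic} applied over $E$ yields a totally singular binary form $\mu\preccurlyeq\psi_E$ with $\iql{(\pi_E\otimes\mu)}=2$. An inspection of the proof of \Cref{Lemma:PiBetaIsotropic} shows that $\mu$ is similar to $\sqf{1,a}$ over $E$, so that the function field $E(\mu)$ coincides with $E(\sqrt{a})$.

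Set $k=\iql{\varphi_{E(\mu)}}=\iql{\varphi_{E(\sqrt{a})}}$. Since the defect depends only on the quasilinear part, this equals $\iql{(\ql{\varphi_E})_{E(\sqrt{a})}}$. Applying part (i) of \Cref{Prop:IsotropySubforms} to the anisotropic totally singular form $\ql{\varphi_E}$ produces a totally singular form $\tau$ over $E$ of dimension $k$ with $\tau\perp a\tau\subseteq \ql{\varphi_E}$, so we may write $\ql{\varphi_E}\cong\tau\perp a\tau\perp\sigma$ for some totally singular $\sigma$.

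For part \ref{Lemma:DefectsProducts1}, since the nonsingular summand of $\varphi_E$ contributes nothing to the defect of $\pi\otimes\varphi_E$, we have $\iql{(\pi\otimes\varphi)_E}=\iql{\pi\otimes\ql{\varphi_E}}$, and
\[\pi\otimes\ql{\varphi_E}\cong (\tau\perp a\tau\perp\sigma)\perp a(\tau\perp a\tau\perp\sigma)\cong \tau\perp a^2\tau\perp 2\times(a\tau)\perp\sigma\perp a\sigma.\]
Using $a^2\tau\cong\tau$ together with the elementary characteristic-two identity $\sqf{c,c}\cong\sqf{c}\perp\sqf{0}$ applied componentwise, both $\tau\perp a^2\tau$ and $2\times(a\tau)$ each contain a $k$-dimensional zero subform, so $\pi\otimes\ql{\varphi_E}$ dominates $2k\times\sqf{0}$. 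Hence $\iql{(\pi\otimes\varphi)_E}\geq 2k=2\iql{\varphi_{E(\mu)}}$.

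Part \ref{Lemma:DefectsProducts2} follows from part (i) by applying \Cref{Lemma:TransDefect} to the dominance $\mu\preccurlyeq\psi_E$ over $E$, giving $\iql{\varphi_{E(\mu)}}\geq\iql{\varphi_{E(\psi_E)}}$; the canonical $F$-algebra embedding $F(\psi)\hookrightarrow E(\psi_E)$ (well-defined since $\psi_E$, being anisotropic of dimension at least two, is irreducible) then yields $\iql{\varphi_{E(\psi_E)}}\geq\iql{\varphi_{F(\psi)}}$, and the desired chain of inequalities follows. The principal technical point is the bookkeeping in the totally singular computation of paragraph three: one must verify that the pairwise collapses in $\tau\perp a^2\tau$ and in $2\times(a\tau)$ really do produce $2k$ independent zero summands, but this is immediate from the identities noted above.
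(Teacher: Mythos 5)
Your proof is correct and follows essentially the same route as the paper's: reduce to $\varphi_E$ anisotropic via \Cref{Lemma:varphiWLOGanisotropic}, extract a totally singular binary $\mu\preccurlyeq\psi_E$ from the concluding clause of \Cref{Lemma:PiBetaIsotropic}, apply \Cref{Prop:IsotropySubforms}\ref{Prop:IsotropySubforms1} to get $\tau$ of dimension $\iql{\varphi_{E(\mu)}}$ with $\tau\otimes\sqf{1,a}\subseteq\ql{\varphi_E}$, and then bound the defect of the tensor product; part (ii) then follows exactly as in the paper via \Cref{Lemma:TransDefect} and the inclusion $F(\psi)\hookrightarrow E(\psi_E)$. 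The only cosmetic differences are that you spell out the $2k$ zero summands by direct computation where the paper invokes $\iql{\pi_E\otimes\mu\otimes\tau}\geq\iql{\pi_E\otimes\mu}\dim\tau$, and that your phrase ``in both parts we may reduce'' is a harmless overstatement, since the hypotheses of \Cref{Lemma:varphiWLOGanisotropic} require $L_2/E$ while $F(\psi)$ need not contain $E$; this does not affect the argument because, like the paper, you actually deduce part (ii) from part (i) rather than invoking the reduction there directly.
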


\begin{proof}
First, note that since $\ql{(\pi\otimes\varphi)_E}\cong\pi_E\otimes\ql{\varphi}_E$, we can assume without loss of generality that $\varphi$ is totally singular. By \Cref{Lemma:varphiWLOGanisotropic} (with $L_1:=E$ and $L_2:=E(\mu)$), it is sufficient to prove the claim in the case when $\varphi_E$ is anisotropic.

\medskip
\ref{Lemma:DefectsProducts1} 
Since $\iql{(\pi\otimes\psi)_E}>0$, by \Cref{Lemma:PiBetaIsotropic}, we can find a binary quadratic form $\mu\preccurlyeq\psi_E$ such that $\iql{\pi_E\otimes\mu}=2$.
If $\varphi_{E(\mu)}$ is anisotropic, then the claim is trivial. 
Hence, suppose that $\varphi_{E(\mu)}$ is isotropic; by \Cref{Prop:IsotropySubforms}\ref{Prop:IsotropySubforms1} (recall that $\varphi_E$ is totally singular), there exists a totally singular quadratic form $\tau$ over $E$ such that $\dim\tau=\iql{\varphi_{E(\mu)}}$ and $\mu\otimes\tau\subseteq\varphi_E$. 
Then
\[\pi_E\otimes\mu\otimes\tau\subseteq(\pi\otimes\varphi)_E;\]
therefore,
\[\iql{(\pi\otimes\varphi)_E}
    \geq\iql{\pi_E\otimes\mu\otimes\tau}
    \geq\iql{\pi_E\otimes\mu}\dim\tau
    =2\dim\tau
    =2\iql{\varphi_{E(\mu)}}.
    \]

\medskip
\ref{Lemma:DefectsProducts2} Let $\mu$ be the same quadratic form as in \ref{Lemma:DefectsProducts1}; then $\iql{\varphi_{E(\mu)}}\geq\iql{\varphi_{E(\psi)}}$ by \Cref{Lemma:TransDefect}. 
Moreover, note that we clearly have $\iql{\varphi_{E(\psi)}}\geq\iql{\varphi_{F(\psi)}}$.
Therefore, $\iql{\varphi_{E(\mu)}}\geq\iql{\varphi_{F(\psi)}}$, and part \ref{Lemma:DefectsProducts2} is just a consequence of part \ref{Lemma:DefectsProducts1}.
\end{proof}

\begin{lemma} \label{Lemma:WittProducts}
Let $\varphi$ and $\psi$ be  quadratic forms over $F$ of dimension at least two with $\varphi$ nonsingular and $\psi$ not totally singular. Let $\pi$ be an 1-fold bilinear Pfister form over $F$. Let $E/F$ be a field extension such that $\pi_E$ and $\psi_E$ are anisotropic but $(\pi\otimes\psi)_E$ is isotropic.
\begin{enumerate}
    \item\label{Lemma:WittProducts1} For some $\mu\preccurlyeq\psi_E$ with $\dim\mu=2$, we have that
    \[\iw{(\pi\otimes\varphi)_E}\geq2\:\iw{\varphi_{E(\mu)}}.\]
    \item\label{Lemma:WittProducts2} We have that
    \[\iw{(\pi\otimes\varphi)_E}\geq2\:\iw{\varphi_{F(\psi)}}.\]
\end{enumerate}
\end{lemma}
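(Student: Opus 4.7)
The plan is to adapt the proof of \Cref{Lemma:DefectsProducts} to Witt indices, exploiting the nonsingularity of $\varphi$ and the hypothesis that $\psi$ is not totally singular. First, by \Cref{Lemma:varphiWLOGanisotropic}, I reduce to the case where $\varphi_E$ is anisotropic; since $\varphi$ is nonsingular, $(\varphi_E)_\an$ is also nonsingular. Writing $\pi\cong\bipf{a}$, I then apply \Cref{Lemma:PiBetaIsotropic} to the anisotropic forms $\psi_E$ and $\pi_E$ to obtain a binary subform $\mu\preccurlyeq\psi_E$ of one of the two types listed there, and treat the two cases separately.

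If $\mu$ is nonsingular with $\iw{\pi_E\otimes\mu}=2$, then $\mu$ is similar to $[1,\alpha]$ for some $\alpha\in E$, so $\pi_E\otimes\mu$ is similar to the $2$-fold quadratic Pfister form $\qpf{a,\alpha}$ and is therefore hyperbolic. Since $E(\mu)\cong E(\wp^{-1}(\alpha))$, \Cref{Prop:IsotropySubforms}\ref{Prop:IsotropySubforms3} provides a bilinear form $\tau$ over $E$ of dimension $r:=\iw{\varphi_{E(\mu)}}$ with $\tau\otimes[1,\alpha]\subseteq\varphi_E$. Tensoring with $\pi_E$ yields a hyperbolic subform of $(\pi\otimes\varphi)_E$ of dimension $4r$, and hence $\iw{(\pi\otimes\varphi)_E}\geq 2r$.

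If instead $\mu$ is totally singular with $\iql{\pi_E\otimes\mu}=2$, a short computation with $\mu\cong\sqf{p,q}$ and the quasi-Pfister form $\spf{a,q/p}$ forces $q/p\equiv a\pmod{\squares{E}}$, whence $E(\mu)\cong E(\sqrt{a})$. \Cref{Prop:IsotropySubforms}\ref{Prop:IsotropySubforms2} with $d=a$ then produces $a_i,b_i,c_i\in E^*$ such that $\bigperp_{i=1}^{r}([a_i,b_i]\perp a[a_i,c_i])\subseteq\varphi_E$ with $r=\iw{\varphi_{E(\mu)}}$. Using $a^2[a_i,c_i]\cong[a_i,c_i]$, each summand tensored with $\pi_E$ simplifies to $\pi_E\otimes([a_i,b_i]\perp[a_i,c_i])$; this form is nonsingular and isotropic (via the vector $(1,0,1,0)$), so by \Cref{Prop:DivideFactor} (using that $\pi_E$ is round) its Witt index is a positive multiple of $\dim\pi_E=2$, hence at least $2$. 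Summing gives $\iw{(\pi\otimes\varphi)_E}\geq 2r$, establishing part (i).

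The main obstacle is correctly aligning each case of \Cref{Lemma:PiBetaIsotropic} with the appropriate part of \Cref{Prop:IsotropySubforms} — the nonsingular case with part \ref{Prop:IsotropySubforms3} (separable quadratic extension), the totally singular case with part \ref{Prop:IsotropySubforms2} (inseparable quadratic extension) — and verifying that in each case the tensored subform carries enough Witt index. For part (ii), \Cref{Lemma:TransIsotropyIndeces} applied over the base field $E$ (with $\psi_E$, which is not totally singular since $\psi$ is not) gives $\iw{\varphi_{E(\mu)}}\geq\iw{\varphi_{E(\psi)}}\geq\iw{\varphi_{F(\psi)}}$, which combined with part (i) yields (ii).
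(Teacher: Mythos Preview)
Your argument follows essentially the same route as the paper's, including the split into the two cases of \Cref{Lemma:PiBetaIsotropic} and the passage to part \ref{Lemma:WittProducts2} via \Cref{Lemma:TransIsotropyIndeces}. One imprecision to correct: in the totally singular case, the assertion ``$q/p\equiv a\pmod{\squares{E}}$'' is not literally true; what the computation with $\spf{a,q/p}$ actually yields is $q/p\in E^2+aE^2\setminus E^2$ (equivalently $\spf{q/p}\cong\spf{a}$), and it is this that gives $E(\mu)=E(\sqrt{q/p})=E(\sqrt{a})$. The paper sidesteps this by quoting the explicit construction in the proof of \Cref{Lemma:PiBetaIsotropic}, where $\mu$ is built as a scalar multiple of $\sqf{1,a}$ from the outset. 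Your appeal to \Cref{Prop:DivideFactor} for the bound $\iw{\pi_E\otimes([a_i,b_i]\perp[a_i,c_i])}\geq2$ is a clean alternative to the paper's explicit identity $[a_i,b_i]\perp[a_i,c_i]\cong[a_i,b_i+c_i]\perp\H$.
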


\begin{proof}
By \Cref{Lemma:varphiWLOGanisotropic} (with $L_1:=E$ and $L_2:=E(\mu)$), we may assume that $\varphi_E$ is anisotropic.

\medskip
\ref{Lemma:WittProducts1} 
Since $\pi_E$ and $\psi_E$ are anisotropic but $(\pi\otimes\psi)_E$ is isotropic, we can apply \Cref{Lemma:PiBetaIsotropic} to find a binary quadratic form $\mu\preccurlyeq\psi_E$ such that either $\iw{\pi_E\otimes\mu}=2$ (if $\mu$ is nonsingular), or $\iql{\pi_E\otimes\mu}=2$ (if $\mu$ is totally singular). In any case, if $\iw{\varphi_{E(\mu)}}=0$, then the claim is trivial; therefore, assume that $r=\iw{\varphi_{E(\mu)}}>0$.

First, assume that $\mu$ is nonsingular. Without loss of generality, we may assume that $\mu\cong[1,d]$ for some $d\in E^*$. By part \ref{Prop:IsotropySubforms3} of \Cref{Prop:IsotropySubforms}, we can find a bilinear form $\tau$ over $E$ of dimension $r$ such that $\tau\otimes[1,d]\subseteq\varphi_E$. Then \[\tau\otimes\pi_E\otimes\mu\cong\pi_E\otimes\tau\otimes\mu\subseteq(\pi\otimes\varphi)_E,\] 
and hence
\[\iw{(\pi\otimes\varphi)_E}\geq\iw{\tau\otimes\pi_E\otimes\mu}\geq(\dim\tau)\iw{\pi_E\otimes\mu}=2r=2\iw{\varphi_{E(\mu)}}.\]

Now, let $\mu$ be totally singular. 
Note that in this case, it follows from the proof of \Cref{Lemma:PiBetaIsotropic} that $\mu\cong x\sqf{1,d}$ 
for some $x\in E^*$ and $d\in E$ such that ${\pi_E\cong\bif{1,d}}$; without loss of generality, assume that $\mu\cong\sqf{1,d}$.
Invoking part \ref{Prop:IsotropySubforms2}  of \Cref{Prop:IsotropySubforms}, we can find $a_i, b_i, c_i\in E^*$, $1\leq i\leq r$, such that
\[\bigperp_{i=1}^r([a_i,b_i]\perp d[a_i,c_i])\subseteq\varphi_E.\]
Note that for any $1\leq i\leq r$, we have
\begin{align*}
\pi_E\otimes([a_i,b_i]\perp d[a_i,c_i]) 
&\cong \sqf{1,d}\otimes([a_i,b_i]\perp d[a_i,c_i]) \\
&\cong [a_i,b_i]\perp d[a_i,c_i] \perp d[a_i,b_i]\perp d^2[a_i,c_i]\\
&\cong \sqf{1,d}\otimes([a_i,b_i]\perp [a_i,c_i])\\
&\cong \sqf{1,d}\otimes([a_i,b_i+c_i]\perp\H)\\
&\cong \sqf{1,d}\otimes [a_i,b_i+c_i] \perp 2\times\H.
\end{align*}
As
\[\pi_E\otimes\left(\bigperp_{i=1}^r([a_i,b_i]\perp d[a_i,c_i])\right)\subseteq(\pi\otimes\varphi)_E,\]
it follows that
\[\iw{(\pi\otimes\varphi)_E}\geq2r=2\iw{\varphi_{E(\mu)}}.\]

\medskip
\ref{Lemma:WittProducts2} Let $\mu$ be as in part \ref{Lemma:WittProducts1}. 
Applying \Cref{Lemma:TransIsotropyIndeces}, we get the inequality ${\iw{\varphi_{E(\mu)}}\geq\iw{\varphi_{E(\psi)}}}$.
Moreover, we have $\iw{\varphi_{E(\psi)}}\geq\iw{\varphi_{F(\psi)}}$. Invoking part \ref{Lemma:WittProducts1}, we get that
\[
\iw{(\pi\otimes\varphi)_E}
\geq2\iw{\varphi_{E(\mu)}}
\geq2\iw{\varphi_{E(\psi)}}
\geq2\iw{\varphi_{F(\psi)}}.
\qedhere
\]
\end{proof}

\begin{remark} \label{Rem:ProductsLemmasApplication}
Note that for any field extension $K/F$ such that $\pi_K$ and $\psi_K$ are anisotropic (so in particular for $K=F$ and $K=\FX$), the field $E=K(\pi\otimes\psi)$ satisfies the assumptions of  \Cref{Lemma:DefectsProducts} and \Cref{Lemma:WittProducts}. In that case, $\pi_E$ and $\psi_E$ are anisotropic by Hoffmann's separation theorem \ref{thm:HoffmannSeparation}, while $(\pi\otimes\psi)_E$ is isotropic.
\end{remark}

\begin{theorem} \label{Cor:DefectsWittProducts}
Let $\varphi$ and $\psi$ be quadratic forms over $F$ of dimension at least two, and let $\pi$ be a bilinear Pfister form. 
\begin{enumerate}
\item \label{Cor:DefectsWittProducts1} We have 
    \[\iql{(\pi\otimes\varphi)_{F(\pi\otimes\psi)}}\geq (\dim\pi)\:\iql{\varphi_{F(\psi)}}.\]
\item \label{Cor:DefectsWittProducts2} Moreover, if $\varphi$ is nonsingular and $\psi$ is not totally singular, then we also have
    \[\iw{(\pi\otimes\varphi)_{F(\pi\otimes\psi)}}\geq (\dim\pi)\:\iw{\varphi_{F(\psi)}}.\]
\end{enumerate}
\end{theorem}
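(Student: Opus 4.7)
The plan is to prove both parts simultaneously by induction on $n$, where $\pi$ is an $n$-fold bilinear Pfister form. The base case $n = 0$ is trivial since $\pi \cong \bif{1}$ makes both sides of each inequality equal to $\istar{\varphi_{F(\psi)}}$. For the inductive step, I would decompose $\pi \cong \pi' \otimes \bipf{a}$ with $\pi'$ an $(n-1)$-fold bilinear Pfister form and $a \in F^*$, and set $\rho := \pi' \otimes \varphi$ and $\sigma := \pi' \otimes \psi$, so that $\pi\otimes\varphi \cong \bipf{a}\otimes\rho$ and $\pi\otimes\psi \cong \bipf{a}\otimes\sigma$. These auxiliary forms have dimension at least two and inherit the structural hypotheses of part \ref{Cor:DefectsWittProducts2}: nonsingularity of $\varphi$ passes to $\rho$, and non-total-singularity of $\psi$ passes to $\sigma$. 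Chaining the inductive hypothesis applied to $(\pi',\varphi,\psi)$ with the $n = 1$ case applied to $(\bipf{a},\rho,\sigma)$ then gives
\[\istar{(\pi\otimes\varphi)_{F(\pi\otimes\psi)}} \geq 2\,\istar{\rho_{F(\sigma)}} \geq 2(\dim\pi')\,\istar{\varphi_{F(\psi)}} = (\dim\pi)\,\istar{\varphi_{F(\psi)}}.\]

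The substance of the proof is therefore the $n = 1$ case. Here I would first use \Cref{Lemma:varphiWLOGanisotropic} (with $L_1 = F(\pi\otimes\psi)$ and $L_2 = F(\psi)$) to reduce to $\varphi$ anisotropic, and then \Cref{Lemma:psiWLOGanisotropic}: if $\iw\psi > 0$, its ``moreover'' clause delivers the inequality immediately; otherwise, its first part lets me replace $\psi$ by $\psi_\nd$, which under $\iw\psi = 0$ coincides with $\psi_\an$ and is thus anisotropic. The structural conditions of part \ref{Cor:DefectsWittProducts2} are preserved throughout these reductions. With $\varphi, \psi$ both anisotropic and assuming $\pi$ anisotropic, \Cref{Rem:ProductsLemmasApplication} (with $K = F$) guarantees that the field $E := F(\pi\otimes\psi)$ satisfies the hypotheses of \Cref{Lemma:DefectsProducts} and \Cref{Lemma:WittProducts}. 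Part \ref{Lemma:WittProducts2} of \Cref{Lemma:WittProducts} then yields part \ref{Cor:DefectsWittProducts2} of the theorem. For part \ref{Cor:DefectsWittProducts1}, I would distinguish two subcases: if $\psi$ is totally singular, then so is $\pi\otimes\psi$, giving $\iql{(\pi\otimes\psi)_E} > 0$, so part \ref{Lemma:DefectsProducts2} of \Cref{Lemma:DefectsProducts} applies directly; if $\psi$ is not totally singular, then $F(\psi)/F$ is separable, and since $\ql\varphi$ is anisotropic (as a subform of the anisotropic $\varphi$), it remains anisotropic over $F(\psi)$, forcing $\iql{\varphi_{F(\psi)}} = 0$ and rendering the inequality trivial.

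The main obstacle I anticipate is the residual case where $\pi$ itself is isotropic (hence metabolic), since \Cref{Rem:ProductsLemmasApplication} does not apply there. However, in that setting $\pi\otimes\tau \cong \tau\perp\tau$ for every quadratic form $\tau$, and both inequalities can be checked by direct computation: for part \ref{Cor:DefectsWittProducts2}, $\tau\perp\tau$ is hyperbolic whenever $\tau$ is nonsingular, so the left-hand side equals $(\dim\pi)(\dim\varphi)/2$, and the inequality reduces to the trivial bound $\iw{\varphi_{F(\psi)}} \leq (\dim\varphi)/2$; for part \ref{Cor:DefectsWittProducts1}, the identity $\iql{\tau\perp\tau} = \dim\ql\tau + \iql\tau$ together with \Cref{Lemma:FunFieldPurTransc} (to compare $F(\pi\otimes\psi)$ with $F(\psi)$ in the totally singular and nonsingular cases of $\psi$) yields the desired inequality.
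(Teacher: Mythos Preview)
Your proof is correct and follows essentially the same approach as the paper: induction on the fold of $\pi$, reducing to anisotropic $\psi$ via \Cref{Lemma:psiWLOGanisotropic}, and invoking \Cref{Lemma:DefectsProducts,Lemma:WittProducts} with $E = F(\pi\otimes\psi)$ via \Cref{Rem:ProductsLemmasApplication}. You are slightly more careful than the paper about edge cases---explicitly treating isotropic $\pi$, and splitting part~\ref{Cor:DefectsWittProducts1} according to whether $\psi$ is totally singular in order to verify the hypothesis $\iql{(\pi\otimes\psi)_E} > 0$ of \Cref{Lemma:DefectsProducts}---but the core argument is identical.
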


\begin{proof}
First, note that by \Cref{Lemma:psiWLOGanisotropic}, we can assume that $\psi$ is anisotropic.

\medskip
\ref{Cor:DefectsWittProducts1}  Let $n$ be such that $\pi$ is an $n$-fold bilinear Pfister form. For $n=1$, the assumptions of \Cref{Lemma:DefectsProducts} with $E=F(\pi\otimes\psi)$ are satisfied by \Cref{Rem:ProductsLemmasApplication}, and the claim follows from part \ref{Lemma:DefectsProducts2} of that lemma. If $n>1$, then we proceed by induction: write $\pi\cong\sqf{1,a}\otimes \pi'$ for some $a\in F^*$ and a bilinear Pfister form $\pi'$ over $F$. We have
\begin{align*}
\iql{(\pi\otimes\varphi)_{F(\pi\otimes\psi)}}
&=\iql{(\sqf{1,a}\otimes\pi'\otimes\varphi)_{F(\sqf{1,a}\otimes\pi'\otimes\psi)}}\\
&\geq 2\:\iql{(\pi'\otimes\varphi)_{F(\pi'\otimes\psi)}}\\
&\geq (2\dim\pi')\:\iql{\varphi_{F(\psi)}}\\
&=(\dim\pi)\:\iql{\varphi_{F(\psi)}},
\end{align*}
where the first inequality is by \Cref{Lemma:DefectsProducts}\ref{Lemma:DefectsProducts2} (this time with $\pi_0\cong\sqf{1,a}$ and $\psi_0\cong\pi'\otimes\psi$; the assumptions of the lemma are satisfied again by \Cref{Rem:ProductsLemmasApplication} with $E=F(\pi_0\otimes\psi_0)$), and the second inequality follows by the induction assumption.

\medskip
\ref{Cor:DefectsWittProducts2} The proof is completely analogous to the proof of \ref{Cor:DefectsWittProducts1}, using \Cref{Lemma:WittProducts} in place of \Cref{Lemma:DefectsProducts}.
\end{proof}

\section{Generic Pfister forms}

In this section, we focus on the so-called generic Pfister forms, i.e., bilinear Pfister forms $\pi\cong\bipf{X_1,\dots,X_n}$ with $X_1,\dots,X_n$ algebraically independent variables. \new{Note that \Cref{Prop:IsotropyXXXPFmultiples} suggests that generic Pfister forms are well-behaved in terms of preserving the isotropy. Indeed, the following theorem shows that in the case of generic Pfister forms, we get the equality $\istar{(\pi\otimes\varphi)_{K(\pi\otimes\psi)}}=(\dim\pi)\:\istar{\varphi_{F(\psi)}}$ for $\istarempty\in\{\iwempty,\iqlempty,\ioneempty\}$   (cf. \Cref{Th:IoneIneq} and \Cref{Cor:DefectsWittProducts}). However, analogously as in \Cref{sec:IndexOverFunctionFields}, we will need to impose some restrictions on the forms $\varphi$ and $\psi$ in the case of the Witt index.}

\begin{theorem}\label{Th:GenericPfisterMultiple}
Let $\varphi$ and $\psi$ be quadratic forms over $F$ of dimension at least two. 
Let $K=F\dbrac{X_1}\dots\dbrac{X_n}$ and $\pi\cong\bipf{X_1,\dots,X_n}$. 
\begin{enumerate}
     \item \label{Th:GenericPfisterMultiple1} If $\varphi$ is nonsingular and $\psi$ is not totally singular, then we have
            \[\iw{(\pi\otimes\varphi)_{K(\pi\otimes\psi)}}=(\dim\pi)\:\iw{\varphi_{F(\psi)}}.\]
    \item \label{Th:GenericPfisterMultiple2} For arbitrary quadratic forms $\varphi$ and $\psi$, we have
            \[\iql{(\pi\otimes\varphi)_{K(\pi\otimes\psi)}}=(\dim\pi)\:\iql{\varphi_{F(\psi)}}.\]
    \item \label{Th:GenericPfisterMultiple3}
    For an arbitrary quadratic form $\varphi$, we have
    \[\ione{\pi\otimes\varphi}=(\dim\pi)\:\ione{\varphi}.\]
\end{enumerate}
\end{theorem}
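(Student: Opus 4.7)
The lower bounds in all three parts follow from earlier results. For \ref{Th:GenericPfisterMultiple1} and \ref{Th:GenericPfisterMultiple2} I would apply \Cref{Cor:DefectsWittProducts} with base field $K$, identifying $\iw{\varphi_{K(\psi)}}$ with $\iw{\varphi_{F(\psi)}}$ (and the defect analogously) by sandwiching $K(\psi)$ between $F(\psi)$ and $F(\psi)\dbrac{X_1}\dots\dbrac{X_n}$ and iterating \Cref{Lemma:IndicesResidueForms}. For \ref{Th:GenericPfisterMultiple3} the lower bound is \Cref{Th:IoneIneq} applied over $K$, once one notes that $\pi\otimes\varphi$ is anisotropic over $K$ (for anisotropic $\varphi$) and that $\ione{\varphi_K}=\ione{\varphi}$, both again by the same sandwich.

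The real content is the three reverse inequalities, which I plan to treat uniformly. After reducing to anisotropic $\varphi$ and $\psi$ via \Cref{Lemma:varphiWLOGanisotropic} and \Cref{Lemma:psiWLOGanisotropic}, set $L:=F(\psi)$ in parts \ref{Th:GenericPfisterMultiple1}, \ref{Th:GenericPfisterMultiple2} and $L:=F(\varphi)$ in part \ref{Th:GenericPfisterMultiple3}, and define the iterated Laurent series field $E:=L\dbrac{X_1}\dots\dbrac{X_n}$. Since $\psi$ (resp.\ $\varphi$) is isotropic over $L$, the form $\pi\otimes\psi$ (resp.\ $\pi\otimes\varphi$) is isotropic over $E$; combined with the evident inclusion $K\subseteq E$, this yields an embedding of $K(\pi\otimes\psi)$ (resp.\ $K(\pi\otimes\varphi)$) into $E$.

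The crucial technical step will be to show that $\pi\otimes\eta$ remains anisotropic over $E$ whenever $\eta$ is an anisotropic quadratic form over $L$. I would prove this by induction on $n$ from the decomposition
\[\bipf{X_1,\dots,X_n}\otimes\eta \;\cong\; \bipf{X_1,\dots,X_{n-1}}\otimes\eta \;\perp\; X_n\bigl(\bipf{X_1,\dots,X_{n-1}}\otimes\eta\bigr)\]
together with \Cref{Lemma:IndicesResidueForms}. Applied to the Witt decomposition of $\varphi_L$, which persists over $E$ by the same lemma, this yields
\[\iw{(\pi\otimes\varphi)_E}=(\dim\pi)\iw{\varphi_L},\qquad \iql{(\pi\otimes\varphi)_E}=(\dim\pi)\iql{\varphi_L},\]
and hence also $\iti{(\pi\otimes\varphi)_E}=(\dim\pi)\iti{\varphi_L}$. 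Because $\iwempty$, $\iqlempty$ and $\itiempty$ all grow monotonically under field extensions, the embedding above immediately produces the desired upper bounds in all three parts.

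I expect the only subtlety to be bookkeeping: the nonsingularity of $\varphi$ and non-total-singularity of $\psi$ in part \ref{Th:GenericPfisterMultiple1} enter solely through the lower bound from \Cref{Cor:DefectsWittProducts}\ref{Cor:DefectsWittProducts2}, whereas the upper-bound argument above is uniform across all three parts and would in fact yield an analogous equality for $\itiempty$ without any additional hypotheses.
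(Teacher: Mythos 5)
Your lower-bound argument matches the paper's after the standard reductions, and the auxiliary computation $\istar{(\pi\otimes\varphi)_E}=(\dim\pi)\,\istar{\varphi_{F(\psi)}}$ over the iterated Laurent series field $E=F(\psi)\dbrac{X_1}\dots\dbrac{X_n}$ (via \Cref{Lemma:IndicesResidueForms}) is correct and is exactly the paper's endpoint, carried out there over $F(\psi)\dbrac{X}$ for $n=1$ and iterated.

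The gap is the claimed \emph{embedding} $K(\pi\otimes\psi)\hookrightarrow E$. Such an embedding does not exist in general: if $\dim\psi=2$ then $F(\psi)/F$ is a quadratic extension, hence $E/K$ is a finite extension, whereas $K(\pi\otimes\psi)/K$ has transcendence degree $2\dim\pi-2>0$. What the isotropy of $(\pi\otimes\psi)_E$ actually gives is a $K$-place $K(\pi\otimes\psi)\to E\cup\{\infty\}$, and to transfer the index estimate along a place you would need a specialization theorem for $\iwempty$, $\iqlempty$ in characteristic $2$, which your proposal neither cites nor proves. The paper avoids places entirely by going through $K(\psi)$: \Cref{Lemma:TransIsotropyIndeces} (resp.\ \Cref{Lemma:TransDefect}, resp.\ \Cref{Lemma:TransDefect_phi=psi}) applied to the subform $\psi\preccurlyeq\bipf{X}\otimes\psi$ transfers the index from $K(\bipf{X}\otimes\psi)$ to $K(\psi)$, and then the genuine field inclusion $K(\psi)\subseteq F(\psi)\dbrac{X}$ finishes. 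For part~\ref{Th:GenericPfisterMultiple1} your gap is easily closed without places: since $\psi$ is not totally singular, $\iw{(\pi\otimes\psi)_E}>0$, so $E(\pi\otimes\psi)/E$ is purely transcendental by \Cref{Lemma:FunFieldPurTransc} and the inclusion $K(\pi\otimes\psi)\subseteq E(\pi\otimes\psi)$ gives the monotonicity. But when $\psi$ (resp.\ $\varphi$ in part~\ref{Th:GenericPfisterMultiple3}) is totally singular, $\iw{(\pi\otimes\psi)_E}=0$ and this trick is unavailable, so a real repair is needed there. Finally, your closing remark that an equality for $\itiempty$ would follow without any hypotheses does not stand: the lower bound for $\itiempty$ would still rest on the Witt-index lower bound from \Cref{Cor:DefectsWittProducts}\ref{Cor:DefectsWittProducts2}, which carries exactly the hypotheses of part~\ref{Th:GenericPfisterMultiple1}.
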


\begin{proof}
By \Cref{Lemma:varphiWLOGanisotropic} (with $E:=F$, $L_1:=K(\pi\otimes\psi)$ and $L_2:=F(\psi)$ for parts \ref{Th:GenericPfisterMultiple1} and \ref{Th:GenericPfisterMultiple2} , resp. with $L_1:=K(\pi\otimes\varphi)$ and $L_2:=F(\varphi)$ for part \ref{Th:GenericPfisterMultiple3})  we can assume that $\varphi$ is anisotropic. Moreover, since $\pi\otimes\varphi\cong\pi\otimes\varphi_\an$ is anisotropic by \Cref{Lemma:IndicesResidueForms}, we can use \Cref{Lemma:psiWLOGanisotropic} to reduce to the case when $\psi$ is anisotropic.

Note that it is sufficient to prove the theorem for $n=1$, because for $n>1$, we only need to iterate the argument. Therefore, let $\pi\cong\bipf{X}$ and $K=\FX$.

\medskip

\ref{Th:GenericPfisterMultiple1} Since $\psi\subseteq\bipf{X}\otimes\psi$, we can apply \Cref{Lemma:TransIsotropyIndeces} (note that $\bipf{X}\otimes\psi$ is not totally singular). We get 
\[\iw{(\bipf{X}\otimes\varphi)_{K(\bipf{X}\otimes\psi)}} \leq\iw{(\bipf{X}\otimes\varphi)_{K(\psi)}}.\]
Moreover, as $K(\psi)\subseteq F(\psi)\dbrac{X}$ and invoking \Cref{Lemma:IndicesResidueForms}, we have that
\[\iw{(\bipf{X}\otimes\varphi)_{K(\psi)}}\leq \iw{(\bipf{X}\otimes\varphi)_{F(\psi)\dbrac{X}}}=2\:\iw{\varphi_{F(\psi)}}.\]
Put together, we get
\begin{equation}\label{Eq:WittGenericPfisterMultiple-leq}
\iw{(\bipf{X}\otimes\varphi)_{K(\bipf{X}\otimes\psi)}}\leq2\:\iw{\varphi_{F(\psi)}}.    
\end{equation}

On the other hand, $\bipf{X}\otimes\psi$ is isotropic over $K(\bipf{X}\otimes\psi)$, while both $\bipf{X}$ and $\psi$ are anisotropic over $K(\bipf{X}\otimes\psi)$ by Hoffmann's separation theorem \ref{thm:HoffmannSeparation}. Therefore, part \ref{Lemma:WittProducts2} of \Cref{Lemma:WittProducts} implies that 
\[\iw{(\bipf{X}\otimes\varphi)_{K(\bipf{X}\otimes\psi)}}\geq2\:\iw{\varphi_{K(\psi)}}.\]
Moreover, as $F(\psi)\subseteq K(\psi)\subseteq F(\psi)\dbrac{X}$, we have 
\[\iw{\varphi_{F(\psi)}}\leq\iw{\varphi_{K(\psi)}}\leq\iw{\varphi_{F(\psi)\dbrac{X}}}=\iw{\varphi_{F(\psi)}};\]
therefore, $\iw{\varphi_{K(\psi)}}=\iw{\varphi_{F(\psi)}}$. It follows that
\begin{equation}\label{Eq:WittGenericPfisterMultiple-geq}
\iw{(\bipf{X}\otimes\varphi)_{K(\bipf{X}\otimes\psi)}}\geq2\:\iw{\varphi_{F(\psi)}}.    
\end{equation}
Combining \eqref{Eq:WittGenericPfisterMultiple-leq} and \eqref{Eq:WittGenericPfisterMultiple-geq}, the claim follows.

\bigskip

\ref{Th:GenericPfisterMultiple2} 
Note that $\iql{(\pi\otimes\varphi)_K}=2\:\iql{\varphi}$ by \Cref{Lemma:IndicesResidueForms}. If $\psi$ is not totally singular, then $\pi\otimes\psi$ is also not totally singular, and so the extensions $K(\pi\otimes\psi)/K$ and $F(\psi)/F$ are separable. Thus, we have 
\[\iql{(\pi\otimes\varphi)_{K(\pi\otimes\psi)}}
=\iql{(\pi\otimes\varphi)_K}
=2\:\iql{\varphi}
=2\:\iql{\varphi_{F(\psi)}},\] 
so the claim holds. Therefore, assume that $\psi$ is totally singular.

The proof is analogous to the proof of part \ref{Th:GenericPfisterMultiple1} with $\iqlempty$ in place of $\iwempty$. Instead of \Cref{Lemma:TransIsotropyIndeces}, we use \Cref{Lemma:TransDefect} (note that this lemma put no assumptions on $\bipf{X}\otimes\psi$). In place of \Cref{Lemma:WittProducts}, we apply \Cref{Lemma:DefectsProducts} (here we need $\psi$ to be totally singular, to ensure that the isotropy of $\bipf{X}\otimes\psi$ over $K(\bipf{X}\otimes\psi)$ implies $\iql{(\bipf{X}\otimes\psi)_{K(\bipf{X}\otimes\psi)}}>0$). The details are left to the reader.

\bigskip

\ref{Th:GenericPfisterMultiple3} The proof goes along the same lines as the proof of part \ref{Th:GenericPfisterMultiple1} with $\itiempty$ in place of $\iwempty$ and $\psi=\varphi$. In place of \Cref{Lemma:TransIsotropyIndeces} we use  \Cref{Lemma:TransDefect_phi=psi}, and instead of \Cref{Lemma:WittProducts} we apply  \Cref{Th:IoneIneq}. The details are left to the reader.
\end{proof}

There are further connections between a quadratic form and their product with a generic Pfister form apart from the above relations concerning the isotropy indices.
As an example, we show the following link between the respective sets of similarity factors.

\begin{proposition}\label{prop:GenericMultipleRound}
    Let $q$ be an anisotropic quadratic form over $F$ and consider the quadratic form $\varphi=\bipf{X}\otimes q=q\perp Xq$ over $F\dbrac{X}$. 
    Let further $a\in G_{F\dbrac X}(\varphi)\cap F$. 
    Then $a\in G_F(q)$.
    In particular, $\varphi$ is round if and only if $q$ is round.
\end{proposition}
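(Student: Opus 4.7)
The plan is to exploit a Springer-type uniqueness of residue forms over the Laurent series field $E:=F\dbrac{X}$. The hypothesis $a\in G_E(\varphi)\cap F$ translates into the isometry
\[aq\perp X(aq)\cong q\perp Xq \quad \text{over $E$.}\]
Since $q$ is anisotropic over $F$ and $a\in F^*$, both $aq$ and $q$ are anisotropic over $F$, so by \Cref{Lemma:IndicesResidueForms} both sides are anisotropic over $E$. The key step is then to invoke the following uniqueness statement: if $\psi_0,\psi_1,\psi_0',\psi_1'$ are anisotropic quadratic forms over $F$ with $\psi_0\perp X\psi_1\cong\psi_0'\perp X\psi_1'$ over $E$, then $\psi_0\cong\psi_0'$ and $\psi_1\cong\psi_1'$ over $F$. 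Applied to our situation with $\psi_0=\psi_1=aq$ and $\psi_0'=\psi_1'=q$, this immediately yields $aq\cong q$ over $F$, i.e., $a\in G_F(q)$.

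For the ``in particular'' statement, I would first observe that for $a\in F^*$, membership in $D_E(\varphi)$ is equivalent to membership in $D_F(q)$: indeed, $a\in D_E(\varphi)$ iff $\varphi\perp\sqf{a}\cong(q\perp\sqf{a})\perp Xq$ is isotropic over $E$, which by \Cref{Lemma:IndicesResidueForms} amounts to $q\perp\sqf{a}$ being isotropic over $F$ (since $q$ itself is anisotropic), i.e., to $a\in D_F(q)$. Combining this with the main claim and the trivial inclusion $G_F(q)\subseteq G_E(\varphi)\cap F$ (arising from $aq\cong q$ over $F$ immediately implying $a\varphi\cong\varphi$ over $E$), we get the identities $D_E(\varphi)\cap F=D_F(q)$ and $G_E(\varphi)\cap F=G_F(q)$. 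The direction ``$\varphi$ round $\Rightarrow$ $q$ round'' follows at once. The converse rests on the classical fact that the tensor product of a bilinear Pfister form with a round quadratic form is again round; applied to $\bipf{X}$ and $q$, this gives $\varphi=\bipf{X}\otimes q$ round over $E$.

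The main obstacle lies in establishing the Springer-type uniqueness of residue forms in characteristic~2. This is a classical statement that can be derived from the Springer-type direct sum decomposition $W_q(E)\cong W_q(F)\oplus W_q(F)$ of the Witt group of quadratic forms, where anisotropic classes are uniquely represented. Alternatively, one can argue directly by analyzing vectors in $V_E$ realizing the isometry according to their $X$-adic valuations and applying Witt cancellation, using anisotropy of the relevant residue forms at each step. A minor secondary point is to pin down a precise reference for the roundness of $\bipf{X}\otimes q$ when $q$ is round in characteristic~$2$, but this is a standard consequence of the machinery of Pfister forms developed in \cite{ElmanKarpenkoMerkurjev2008}.
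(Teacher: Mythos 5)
Your proof takes a genuinely different route from the paper's, but it has a gap at the exact point you flag as the ``main obstacle'': the Springer-type uniqueness of residue forms for \emph{singular} quadratic forms in characteristic~$2$ is not established, and neither of your suggested remedies clearly handles the singular case. A Witt-group decomposition $W_q(E)\cong W_q(F)\oplus W_q(F)$ only concerns nonsingular forms and does not carry information about the quasilinear part, while ``applying Witt cancellation'' step by step is dangerous here precisely because Witt cancellation fails for singular quadratic forms in characteristic~$2$ (for instance $\sqf{1,1}\cong\sqf{1,0}$, so one cannot cancel $\sqf{1}$ in general); this is the very reason the paper distinguishes the dominance relation $\preccurlyeq$ from $\subseteq$. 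For totally singular and for nonsingular forms separately one can make the uniqueness work (the former by matching even- and odd-valuation parts of the value sets, the latter by the classical Springer theorem), but the mixed case requires care that your sketch omits, and in any case the statement is not among the results the paper cites.

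The paper avoids the issue entirely by a short trick: one first reduces to $q$ (hence $\varphi$) nondefective, observes that $a\in G(\varphi)$ means $\varphi\perp a\varphi\cong\varphi\perp\varphi$ is quasi-hyperbolic with $\iw{\varphi\perp a\varphi}=4r$ and $\iql{\varphi\perp a\varphi}=2s$ where $(2r,2s)$ is the type of $\varphi$, then applies \Cref{Lemma:IndicesResidueForms} to halve these indices and conclude $\iw{q\perp aq}=2r$, $\iql{q\perp aq}=s$. A result of Hoffmann--Laghribi (\cite[Prop.~3.11]{HL04}) then produces a form of dimension $2r+s=\dim q$ dominated by \emph{both} $q$ and $aq$, forcing $q\cong aq$. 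This cleanly sidesteps any cancellation or residue-form uniqueness argument. Your treatment of the ``in particular'' part is fine (your identity $D_E(\varphi)\cap F^*=D_F(q)$ via \Cref{Lemma:IndicesResidueForms} is correct, and the converse via roundness of Pfister multiples is the same citation the paper uses), but the main claim as you present it is incomplete until the residue-form uniqueness is actually proved for arbitrary, possibly singular, anisotropic forms.
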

\begin{proof}
    Since both the set of represented elements and the set of similarity factors are not influenced by the quadratic radical, we may assume $q$ and thus also $\varphi$ to be nondefective.
    Let $q$ be of type $(r,s)$, so that $\varphi$ is of type $(2r,2s)$.
    Because $a$ is a similarity factor of $\varphi$, the orthogonal sum $\varphi\perp a\varphi$ is quasi-hyperbolic, i.e., we have 
    \[\iw{\varphi\perp a\varphi}=4r~~\text{ and }~~\iql{\varphi\perp a\varphi}=2s.\]
    Using \Cref{Lemma:IndicesResidueForms}, we have
    \[\iw{q\perp aq}=\frac12\iw{(q\perp aq\perp X(q\perp aq)}=\frac12\iw{\varphi\perp a\varphi}=\frac12\cdot 4r=2r\]
    and similarly
    \[\iql{q\perp aq}=s.\]
    Now \cite[Proposition 3.11]{HL04} yields the existence of a form $\psi$ of dimension $2r+s$ over $F$ that is both dominated by $q$ and $aq$. 
    Clearly, because all three forms $q, aq, \psi$ have the same dimension, we obtain $q\cong\psi\cong aq$, i.e., $a\in G_F(q)$.
    For the roundness, one implication is the well known statement \cite[Proposition 9.8 (1)]{ElmanKarpenkoMerkurjev2008}, the other one follows from the above using the equality
    \[D_{F\dbrac X}(\varphi)=D_F(q)+ XD_F(q).\qedhere\]
\end{proof}

\section*{Acknowledgments}
The authors would like to thank Adam Chapman and Ahmed Laghribi for pointing out an easier proof of \Cref{prop:GenericMultipleRound}.

K.Z. acknowledges support from the Pacific Institute for the Mathematical Sciences and a partial support from the National Science and Engineering Research Council of Canada. The research and findings may not reflect those of these institutions. 


\printbibliography

\end{document}